\def\Dj{\hbox{D\kern-.73em\raise.30ex\hbox{-}
\raise-.30ex\hbox{}}}
\def\dj{\hbox{d\kern-.33em\raise.80ex\hbox{-}
\raise-.80ex\hbox{\kern-.40em}}}
\DeclareMathOperator{\Tr}{tr}
\newtheorem{theorem}{Theorem}[section]
\newtheorem{lemma}[theorem]{Lemma}
\newtheorem{corollary}[theorem]{Corollary}
\newtheorem{con}[theorem]{Conjecture}
\begin{document}

\baselineskip=0.30in

\vspace*{25mm}

 \begin{center}
 {\Large \bf  Laplacian Coefficients of a Forest in terms of the Number of Closed Walks in the Forest and its Line Graph}

 \vspace{6mm}

 { \bf Ali Ghalavand$^{1,}$\footnote{Corresponding author.} and Ali Reza Ashrafi$^1$}

 \vspace{3mm}

 \baselineskip=0.20in

 $^1${\it Department of Pure Mathematics, Faculty of Mathematical Sciences,\\
  University of Kashan, Kashan 87317--53153, I. R. Iran\/} \\
 {\rm E-mail:} {\tt alighalavand@grad.kashanu.ac.ir,~ashrafi@kashanu.ac.ir}



 \end{center}

\begin{abstract}\noindent
Let $G$ be a finite simple graph with Laplacian polynomial $\psi(G,\lambda)$ $=$ $\sum_{k=0}^{n}$ $(-1)^{n-k}$ $c_k\lambda^k$. In an earlier paper,  the coefficients $c_{n-4}$ and $c_{n-5}$ for trees with respect to some degree-based graph invariants were computed. The aim of this paper is to continue this work by giving an  exact formula for the coefficient $c_{n-6}$. As a consequence of this work, the Laplacian coefficients $c_{n-k}$ of a forest $F$, $1 \leq k \leq 6$,  are computed in terms of the number of closed walks in $F$ and its line graph.

\vskip 3mm

\noindent{\bf Keywords:}  Laplacian coefficient, $k-$matching,  subdivision graph, closed walk.

\vskip 3mm

\noindent{\it 2020 AMS  Subject Classification Number:} 05C50, 05C31, 05C09.
\end{abstract}

\section{\bf Definitions and Notations}
A \textit{simple undirected graph}  is a pair $G = (V,E)$ consisting of a set $V = V(G)$ of vertices and a set $E = E(G)$ of $2-$element subsets of $V$. The elements of $E$ are called \textit{edges} and the number of elements in $V$ is called the \textit{order} of $G$. The notations $n(G)$ and $m(G)$ denote the number of vertices and edges of $G$, respectively. There are two other graph notations worth mentioning now. The first one is $deg_G(v)$  which is the number of edges in $G$ with one end point $v$ and the second one is $deg_G(e)$ which is defined as the degree of vertex $e$ in the line graph of $G$. Obviously,  $deg_G(e) = deg_G(u) + deg_G(v) - 2$.

We use the notation $uvw$ to denote the path of length two such that vertices $u$ and $w$ have degree one and the vertex $v$ has degree two. In a similar way, we use the notation  $uvwx$ to denote a path of length three.

A graph $G$ is said to be \textit{connected} if for arbitrary vertices $x$ and $y$ in $V$, there exists a sequence $x=x_0, x_1, \ldots, x_r=y$ of vertices such that $x_ix_{i+1} \in E$,  $0 \leq i \leq r-1$.  The \textit{distance} between two vertices $u$ and $v$ in a connected graph $G$, $d_G(u,v)$, is defined as the length of a shortest path connecting these vertices and the sum of such numbers is called the \textit{Wiener index} of $G$, denoted by $W(G)$ \cite{16}. The hyper-Wiener index is a generalization of the Wiener index. It   was introduced for trees by Randi\'c in 1993 \cite{151} and for a general graph by  Klein et al. \cite{101}. This topological index is defined as $WW(G)$ $=$ $\frac{1}{2}\sum_{u,v\in V(G)}(d(u,v)+d^{2}(u,v))$.

A subgraph $H$ of a graph $G$ is a graph with vertex set $V(H)$ and edge set $E(H)$, such that $V(H)\subseteq\,V(G)$ and $E(H)\subseteq\,E(G)$.
We use the notation $H\leqslant\,G$ to denote that $H$ is subgraph of $G$. If $Z \subseteq V$, then the \textit{induced subgraph} $G[Z]$ is the graph with vertex set  $Z$ and  the edge set $\{uv\in E \mid \{u,v\}\subseteq Z\}$, and if $H\leqslant\,G$, then $G-H$ is a subgraph of $G$, with vertex set
$V(G)\backslash\,V(H)$ and edge set $E(G)\backslash\,\{uv\mid \{u,v\}\cap\,V(H)\neq\emptyset\}$.

The \textit{subdivision graph} $S$ is a graph constructed from $G$ by inserting a new vertex on each edge of $G$.  It is clear that $n(S) = n(G) + m(G)$ and $m(S) = 2m(G)$.

Suppose $G$ is a graph containing two edges $e$ and $f$. If the edges $e$ and $f$ have a common vertex $u$, then we write $u = e \cap f$. In the case that  $e$ and $f$  don't have common vertex, we will say the edges $e$ and $f$ are \textit{ independent}.  If $M \subseteq E(G)$ and all pair of edges in $M$ are independent, then the set $M$ is called a \textit{ matching} for $G$. A $k$-matching is a matching of size $k$, $1\leq k\leq \frac{n}{2}$, and the number of such matchings is denoted by $ m_k$.  The \textit{matching polynomial} of $G$, $\alpha$, is defined by $\alpha(G,x)$ $=$ $\sum_{k=0}^{\lfloor\frac{n}{2}\rfloor}(-1)^km_kx^{n-2k}$, where $x \ne 0$.  By definition $\alpha(G,0)=1$, see \cite{4} for more details.

In 1972, Gutman and Trinajsti\'c \cite{6} introduced the first degree-based graph invariant applicable in chemistry. This invariant is the first Zagreb index and can defined by the formula $M_1^2(G) = \sum_{v \in V}deg_G{(v)}^2$. The second Zagreb index $M_2^1(G) = \sum_{uv \in E}deg_G{(u)}deg_G{(v)}$ was introduced by Gutman et al. \cite{65} three years later in 1975. The complete history of these graph invariants together with the most important mathematical results about them are reported in \cite{7,8,14}.

The  \textit{forgotten index} of $G$ is another variant of the Zagreb group indices defined as  $M_1^3(G)$ $=$ $\sum_{v \in V}deg_G(v)^3$ $=$ $\sum_{e = uv \in E}[deg_G(u)^2 + deg_G(v)^2]$ \cite{5}. It can be see that $M_1^\alpha(G)$ $=$ $\sum_{u \in V}deg_G(u)^\alpha$, $\mathbb{R} \ni \alpha \ne 0, 1$ is  the general form of the first Zagreb index. Zhang and Zhang \cite{18} obtained the extremal values of the general Zagreb index  in the class of all unicyclic graphs. Mili\'cevi\'c et al. \cite{11}, reformulated the first and second Zagreb indices  in terms of the edge-degrees instead of the vertex-degrees. These  invariants were defined the first and second reformulated Zagreb indices defined as
$EM_1(G)$ $=$ $\sum_{e \cap f \ne \emptyset}[deg_G(e)+deg_G(f)]$ $=$ $\sum_{e\in E}deg_G(e)^2$ and $EM_2(G)$ $=$ $\sum_{e \cap f \ne \emptyset}deg_G(e)deg_G(f)$, respectively.

A $\{ 0,1\}$-matrix is a matrix whose entries consist only of the numbers $0$ and $1$. Suppose $G$ is a graph with vertex set $V = \{ u_1, \cdots, u_n\}$. The adjacency matrix of $G$ is a $\{ 0,1\}$-matrix $A (G)= (a_{ij})$  in which $a_{ij} = 1$ if and only if $u_iu_j \in E$. It is easy to see that $A$ is a real symmetric matrix of order $n$ and so all of its eigenvalues are real.  The matrices $D(G) = [d_{ij}]$ and $L(G) = D(G) - A(G)$ in which $d_{ii} = deg(u_i)$ and  $d_{ij} = 0, i \ne j,$ are called the diagonal and Laplacian matrices of $G$, respectively. It is well-known that all eigenvalues of $L(G)$ are non-negative real numbers with $0$ as the smallest eigenvalue.

The Laplacian polynomial of a graph $G$ is one of the most important polynomial associated to a graph. If $G$ is a graph, then the Laplacian polynomial of $G$  is the characteristic polynomial of  $L(G)$. The roots of this  polynomial are called the Laplacian eigenvalues of $G$. Suppose $\psi(G,x)$ $=$ $\det(x I_n - L)$ $=$ $\sum_{k=0}^{n}(-1)^{n-k}c_kx^k$ denotes the Laplacian polynomial of $G$. Since the coefficients of the Laplacian polynomial have graph theoretical meaning, some authors took into account the coefficients of this polynomial.

Let $f$ be a  topological index and $G$ be a graph. For simplifying our arguments, we usually write  $f$ as $f(G)$.

\begin{lemma} \label{l1}
Suppose $G$ is a graph. The following statements hold:
\begin{enumerate}
\item {\rm (Merris \cite{12} and Mohar \cite{13})} $c_0(G)=0$, $c_1(G)=n\tau(G)$, $c_n(G)=1$ and $c_{n-1}(G)=2m$, where $\tau(G)$ is the number of spanning trees of $G$;

\item  {\rm (Yan and Yeh \cite{17})} $c_2(G)=W(G)$, when $G$  is a tree;

\item  {\rm (Gutman \cite{8-8})} $c_3(G)=WW(G)$, when $G$  is a tree;

\item  {\rm (Oliveira et al. \cite{15})} $c_{n-2}(G)=\frac{1}{2}[4m^2-2m-M_1^2]$ and
$c_{n-3}(G)=\frac{1}{3!}[4m^2(2m-3)-6M_1^2m+6M_1^2+2M_1^3-12t(G)]$, where $t(G)$ is the number of triangles in $G$.
\end{enumerate}
\end{lemma}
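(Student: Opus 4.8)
The unifying observation is that, with the sign convention $\psi(G,x)=\sum_{k=0}^{n}(-1)^{n-k}c_k x^{k}$, the coefficient $c_{n-j}(G)$ is exactly the $j$-th elementary symmetric function of the Laplacian eigenvalues, equivalently the sum of all principal minors of $L(G)$ of order $j$,
\[
c_{n-j}(G)=\sum_{\substack{S\subseteq V(G)\\ |S|=j}}\det\bigl(L(G)[S]\bigr),
\]
and that expanding these minors via the all-minors matrix--tree theorem turns this into the classical spanning-forest expansion (Kelmans--Chelnokov)
\[
c_{n-j}(G)=\sum_{F}\ \prod_{i=1}^{n-j}\bigl|V(T_i)\bigr|,
\]
where $F$ runs over the spanning forests of $G$ with exactly $j$ edges and $T_1,\dots,T_{n-j}$ are the components of $F$. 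I would take this single identity as the engine for the whole lemma, reading it with $j$ small for items~1 and~4 and with $j$ close to $n$ for items~2 and~3. Item~1 then follows at once: $c_n$ is the leading coefficient $1$; $c_0=0$ because $0$ is always a Laplacian eigenvalue; $c_{n-1}$ is the case $j=1$, where each one-edge forest contributes weight $2$ and there are $m$ of them; and $c_1$ equals the product of the nonzero Laplacian eigenvalues, which the matrix--tree theorem evaluates as $n\tau(G)$.

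For item~4 I would enumerate the forest types counted by $j=2$ and $j=3$ and convert their numbers into degree data. At $j=2$ only a $2$-matching (weight $4$) and a path $uvw$ (weight $3$) occur; writing the number of such paths as $\sum_v\binom{\deg v}{2}=\tfrac12 M_1^2-m$ and $m_2=\binom{m}{2}-(\tfrac12 M_1^2-m)$ collapses the sum to $\tfrac12[4m^2-2m-M_1^2]$. At $j=3$ the forests are the $3$-matching (weight $8$), a path $uvw$ plus an independent edge (weight $6$), a path $uvwx$ (weight $4$) and a star $K_{1,3}$ (weight $4$). The star count is $\sum_v\binom{\deg v}{3}$, which supplies the $M_1^3$ contribution, while the paths $uvwx$ must be counted as $\sum_{vw\in E}(\deg v-1)(\deg w-1)-3t(G)$, the subtraction of $3t(G)$ correcting for the closed walks that are triangles rather than genuine paths; this is precisely where the term $t(G)$ enters.

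For items~2 and~3 I would instead read the forest formula at $j=n-2$ and $j=n-3$, where the spanning forests have two and three components. In a tree these arise by deleting one, respectively two, edges. Deleting an edge $e$ splits $T$ into parts of sizes $a_e,b_e$ with $a_e+b_e=n$, so the weight is $a_eb_e$ and $c_2=\sum_{e}a_eb_e$, which is exactly the classical edge form of the Wiener index $W(T)$. Deleting two edges yields three parts of sizes $a,b,c$ with weight $abc$; summing over all such pairs gives $c_3$, and the remaining task is the combinatorial identity turning $\sum abc$ into $\tfrac12\sum_{u,v}\bigl(d(u,v)+d^2(u,v)\bigr)=WW(T)$, obtained by grouping the deleted-edge pairs according to the vertex pair they separate.

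The main obstacle I anticipate lies in item~4 for $c_{n-3}$: one must control four forest shapes at once, express $m_3$ and the mixed path-plus-edge count through $m$, $M_1^2$, $M_1^3$ and the second Zagreb index $M_2^1$ without double counting, and then verify the clean cancellation of all $M_2^1$ contributions while the triangle correction $t(G)$ survives with exactly the right coefficient. The hyper-Wiener identity in item~3 is the secondary difficulty, since it requires recognising the product-of-component-sizes sum as $WW(T)$.
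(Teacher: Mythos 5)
Your forest-expansion strategy (the coefficient $c_{n-j}$ as the sum of principal $j\times j$ minors of $L$, expanded by the all-minors matrix--tree theorem into $c_{n-j}=\sum_F\prod_i|V(T_i)|$ over spanning forests $F$ with $j$ edges) is sound, and it is genuinely different from the paper, which gives no proof of this lemma at all but simply cites Merris, Mohar, Yan--Yeh, Gutman and Oliveira et al. Items 1, 2 and 4 go through exactly as you outline: with $p_3=\tfrac12M_1^2-m$, $p_4=\sum_{vw\in E}(\deg_G v-1)(\deg_G w-1)-3t$, $s=\sum_v\binom{\deg_G v}{3}$ and the inclusion--exclusion count $m_3=\binom m3-(m-2)p_3+p_4+2s+2t$, your weights $8,6,4,4$ reproduce $c_{n-3}=\frac{1}{3!}[4m^2(2m-3)-6M_1^2m+6M_1^2+2M_1^3-12t]$ with the $M_2^1$ terms cancelling as you predicted. (Two small caveats: $t$ enters twice, once in the $P_4$ count and once through $m_3$, not only in the place you single out; and "$c_1$ is the product of the nonzero Laplacian eigenvalues" requires $G$ connected, though the forest formula yields $c_1=n\tau(G)$ in all cases.)

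The genuine gap is in item 3. Your reduction is correct: for a tree the expansion gives $c_3=\sum_{\{e,f\}}n_An_Bn_C$, summed over unordered pairs of edges, where $n_A,n_B,n_C$ are the orders of the three components of $T-\{e,f\}$. But the identity you defer to the end, $\sum_{\{e,f\}}n_An_Bn_C=WW(T)$, is false. Since $\binom{d(u,v)+1}{2}=d(u,v)+\binom{d(u,v)}{2}$ counts multisets of two edges on the $u$--$v$ path, and a pair of distinct edges lies on exactly $n_An_C$ such paths ($A$ and $C$ being the two \emph{end} components), the correct identity is $WW(T)=W(T)+\sum_{\{e,f\}}n_An_C$; the middle factor $n_B$ does not belong. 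Concretely, for $T=P_5$ one computes $c_3=e_2(L)=21$ while $WW(P_5)=\binom{7}{4}=35$, and on paths $c_3(P_n)$ grows like $n^5$ whereas $WW(P_n)$ grows like $n^4$. So the step you flagged as "the secondary difficulty" cannot be completed, and in fact item 3 as printed cannot be proved because it is not true as stated ($c_2=W$ survives every test, but $c_3=WW$ already fails for $P_5$); the cited result of Gutman must take a different form, and your own reduction exhibits exactly what $c_3$ really equals.
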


In   \cite{1,2,2-2} we proved the following formulas for the coefficients $c_{n-4}(G)$ and $c_{n-5}(G)$, when $G$ is a forest, respectively.
\begingroup\allowdisplaybreaks\begin{align}
c_{n-4}(G) &=\frac{1}{4!}\Big[4m\big(4m^3-12m^2+51m-6M_1^2m-33M_1^2+4M_1^3+3\big)+3M_1^2\big(17M_1^2\nonumber\\
&-20\big)+72 M_1^3-54 M_1^4-24 M_2^1\Big]-16\hspace{-3mm}\sum_{\{u,v\}\subset V(G)}{deg_G(u)\choose 2}{deg_G(v)\choose 2}\nonumber\\
&=\frac{1}{4!}\Big[4m\big(4m^3-12m^2+3m-6M_1^2m+15M_1^2+4M_1^3+3\big)+3\big(M_1^2-2\big)^2\label{oeq1}\\
&-24M_1^3-6M_1^4-24M_2^1-12   \Big],\nonumber
\end{align}\endgroup
\begingroup\allowdisplaybreaks\begin{align}
c_{n-5}(G)&=\frac{1}{5!}\Big[2m\big(16m^4-80m^3+60m^2-40M_1^2m^2+60m+180M_1^2m
+40M_1^3m \label{oeq2}\\
&+15(M_1^2)^2-120M_1^2-140M_1^3-30M_1^4-120M_2^1\big)-20M_1^2\big(3M_1^2+M_1^3\nonumber\\
&+6\big)+120 M_1^3+120 M_1^4+24 M_1^5+240 M_2^1+120\alpha_{1,2}\Big].\nonumber
\end{align}\endgroup

Suppose $\lambda$ and $\xi$ are two arbitrary real numbers. We now define three invariants which is useful in simplifying formulas in our results. These are:
\begingroup\allowdisplaybreaks\begin{align*}
\alpha_{\lambda,\xi}(G) &= \sum\limits_{uv \in E}\Big[deg_G(u)^\lambda\,deg_G(v)^\xi+deg_G(u)^\xi\,deg_G(v)^\lambda\Big],\\
\beta(G) &=  \sum\limits_{e \sim f}\,deg_G(e \cap f)\Big(deg_G(e)+deg_G(f)\Big),\\
M_2^\lambda(G)&= \sum\limits_{uv\in E}\,\Big(deg_G(u)deg_G(v)\Big)^\lambda.
\end{align*}\endgroup
Note that the second Zagreb index is just the case of $\lambda = 1$ in $M_2^\lambda$.

Let $G$ and $H$ be  graphs. Set $\mathcal{S}_H(G)=\{X\mid X\leqslant\,G~\text{and}~X\cong\,H\}$. If $f$ and $g$ are two degree-based graph invariants, then we define two new degree-based topological indices $fg$ and $Hf$ as  $fg(G)$ $=$ $f(G)\times\,g(G)$ and
$Hf(G)$ $=$ $\sum_{X\in\mathcal{S}_H(G)}f(G-X)$.

Let $P_n$ denote the path graph on $n$ vertices. In a recent paper \cite{3}, Das et al.  presented the following formula for the number of  $k$-matchings,  $1\leq k\leq
\lfloor\frac{n}{2}\rfloor$, in a graph $G$ as:
\begin{equation}\label{5eq1}
m_k(G)=\frac{1}{k}P_2m_{k-1}(G).
\end{equation}

They also proved the following two results:

\begin{lemma}\label{5ml1}
Let $G$ be a graph with $n$ vertices and $m$ edges. Then
\begin{enumerate}
\item $P_2{m}(G)=m^2+m-M_1^2$.
\item $P_2{m^2}(G)=m^3+M_1^3-2mM_1^2+2M_2^1+2m^2-2M_1^2+m$.
\item $P_2{m^3}(G)=m^4-3m^2M_1^2+3mM_1^3+6mM_2^1-M_1^4-3\alpha_{1,2}+3m^3
-6mM_1^2+3M_1^3+6M_2^1+3m^2-3M_1^2+m$.
\item $P_2{m^4}(G)=M_1^5+4\alpha_{1,3}-4mM_1^4+6M_2^2-12m\alpha_{1,2}+6m^2M_1^3
+12m^2M_2^1-4m^3M_1^2-4M_1^4-12\alpha_{1,2}+12mM_1^3
+24mM_2^1-12m^2M_1^2+6M_1^3+12M_2^1-12mM_1^2-4M_1^2
+m^5+4m^4+6m^3+4m^2+m$,
\item $P_2{m^5}(G) = m^6 + 5m^5 + (10-5M_1^2)m^4 + (10+10M_1^3+20M_2^1-20M_1^2)m^3 + (5+60M_2^1-10M_1^4-30\alpha_{1,2} +30M_1^3-30M_1^2)m^2 + (30M_2^2-60\alpha_{1,2}+5M_1^5+20\alpha_{1,3}+30M_1^3-20M_1^2+60M_2^1-20M_1^4+1)m+20\alpha_{1,3}-5\alpha_{1,4} -10\alpha_{2,3}+10M_1^3-5M_1^2+20M_2^1-10M_1^4+5M_1^5-M_1^6+30M_2^2-30\alpha_{1,2}$.
\end{enumerate}
\end{lemma}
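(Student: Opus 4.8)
The plan is to reduce the statement to a single computation about the multiset of edge-degrees of $G$ and then expand binomially. First I would unwind the operator $P_2(\cdot)$ from the definition $Hf(G)=\sum_{X\in\mathcal{S}_H(G)}f(G-X)$. Since $P_2$ is a single edge, the family $\mathcal{S}_{P_2}(G)$ is in bijection with $E(G)$, and for $e=uv\in E(G)$ the graph $G-e$ (deletion of both endpoints and all incident edges) has exactly $m-(deg_G(u)+deg_G(v)-1)=m-1-deg_G(e)$ edges, where I use the identity $deg_G(e)=deg_G(u)+deg_G(v)-2$ recorded in the first section (only the edge $uv$ itself is incident to both $u$ and $v$). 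Hence, writing $d_e:=deg_G(e)$,
\[
P_2 m^{\,j}(G)=\sum_{e\in E(G)}\bigl(m-1-d_e\bigr)^{j}.
\]

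Next I would apply the binomial theorem to each summand and extract the constant $m-1$, which expresses everything through the edge-degree power sums $S_i:=\sum_{e\in E(G)}d_e^{\,i}$:
\[
P_2 m^{\,j}(G)=\sum_{i=0}^{j}\binom{j}{i}(-1)^{i}(m-1)^{\,j-i}\,S_i .
\]
Thus the whole lemma collapses to evaluating $S_0,\dots,S_5$. Note that $S_0=m$ and that $S_2=EM_1(G)$ is the first reformulated Zagreb index, so this step also links the formulas back to the invariants introduced earlier.

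The core computation is then to rewrite each $S_i=\sum_{e=uv}(deg_G(u)+deg_G(v)-2)^{i}$ in the vertex-degree invariants. Expanding by the multinomial theorem produces, edge by edge, homogeneous symmetric sums $\sum_{e=uv}deg_G(u)^{a}deg_G(v)^{b}$, each of which I would identify with a named quantity: the diagonal terms $a=b$ give $M_2^{a}(G)$; the genuinely mixed terms give $\alpha_{a,b}(G)$; and the boundary terms $b=0$ collapse via the counting identity $\sum_{e=uv}[deg_G(u)^{a}+deg_G(v)^{a}]=\sum_{w}deg_G(w)^{a+1}=M_1^{a+1}(G)$. Substituting these evaluations back and collecting powers of $m$ yields the five displayed identities; for example $S_1=M_1^2-2m$ reproduces the first formula, and $S_2=M_1^3+2M_2^1-4M_1^2+4m$ reproduces the second.

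I expect the only real obstacle to be the sheer volume of algebra for $j=4$ and $j=5$. There $S_4$ and $S_5$ force the appearance of the higher invariants $M_1^5,M_1^6,M_2^2,\alpha_{1,2},\alpha_{1,3},\alpha_{1,4},\alpha_{2,3}$, and each must be produced with the correct multinomial coefficient and sign \emph{before} the factors $(m-1)^{\,j-i}$ are distributed and like terms gathered; a single mis-collected coefficient would corrupt the final polynomial. No new structural idea is needed beyond $deg_G(e)=deg_G(u)+deg_G(v)-2$, so the difficulty is entirely disciplined bookkeeping, which I would organize by first tabulating $S_0,\dots,S_5$ in closed form once and only afterwards performing the binomial assembly separately for each $j$.
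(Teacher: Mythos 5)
Your reduction is correct: since $\mathcal{S}_{P_2}(G)$ is in bijection with $E(G)$ and $m(G-uv)=m-\deg_G(u)-\deg_G(v)+1=m-1-\deg_G(e)$, one indeed has $P_2m^j(G)=\sum_{e\in E}(m-1-d_e)^j$, and your sample evaluations $S_1=M_1^2-2m$, $S_2=M_1^3+2M_2^1-4M_1^2+4m$ check out against items (1) and (2); I also verified item (3) from your $S_3$. The paper itself gives no proof of this lemma (it is quoted from the submitted reference of Das et al.), but your argument is the natural one and coincides with the deletion-counting expansions the authors carry out for their analogous lemmas (e.g.\ Lemmas \ref{6lmm4}--\ref{6lmm9}), so this is essentially the intended approach.
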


\begin{lemma}\label{5lm2}
Let $G$ be a graph with $n$ vertices, $m$ edges and girth $ \geq 5$. Then
\begin{enumerate}
\item $P_2{M_1^2}(G)=(m+3)\,M_1^2-M_1^3-4M_2^1-2m $.
\item $P_2{M_1^3}(G)=(m+3)\,M_1^3-M_1^4-3\alpha_{1,2}+6M_2^1-4M_1^2+2m$.
\item $P_2{M_1^4}(G) = (m+4)\,M_1^4-M_1^5+5M_1^2-2m -4\alpha_{1,3} + 6\alpha_{1,2}-6M_1^3-8M_2^1$.
\item $P_2{M_2^1}(G)=(m-9)M_2^1-2EM_2-5M_1^3+11M_1^2+M_1^4+\alpha_{1,2}-8m$.
\item $P_2{(mM_1^2)}(G)=(M_1^2-2)m^2+(4M_1^2-M_1^3-4M_2^1-18)m-6M_1^3+\alpha_{1,2}
-2\beta+17M_1^2-6M_2^1+8EM_1+4EM_2-(M_1)^2+M_1^4$.
\end{enumerate}
\end{lemma}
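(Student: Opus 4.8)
\medskip
\noindent\textbf{Plan of proof.}
The plan is to evaluate $P_2$ straight from its definition. A subgraph of $G$ isomorphic to $P_2$ is an edge together with its two endpoints, so
\[
  P_2 f(G)=\sum_{uv\in E(G)}f\bigl(G-\{u,v\}\bigr),
\]
where $G-\{u,v\}$ denotes the graph obtained by deleting $u,v$ and all edges meeting them. Everything reduces to describing $G-\{u,v\}$ locally for a fixed edge $uv$ and then re-summing over $uv$. Write $N(x)$ for the set of neighbours of $x$ and, for $k\geq 1$, put $S_k(x)=\sum_{w\in N(x)}deg_G(w)^k$. The girth being at least $5$ pins down the local picture: $u,v$ have no common neighbour (else a triangle), so $N(u)\setminus\{v\}$ and $N(v)\setminus\{u\}$ are disjoint and each of their vertices drops exactly one unit of degree in $G-\{u,v\}$ while every other vertex is unchanged; moreover no edge joins $N(u)\setminus\{v\}$ to $N(v)\setminus\{u\}$ (else a $4$-cycle through $uv$), and no two neighbours of the same endpoint are adjacent (else a triangle). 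Hence every surviving edge has \emph{at most one} endpoint whose degree decreased, and by exactly one unit.

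\medskip
I would first dispose of the three vertex-based indices $M_1^2,M_1^3,M_1^4$, which share one template; $M_1^2$ is the model. Splitting $M_1^2(G-\{u,v\})=\sum_{w\neq u,v}deg_{G-\{u,v\}}(w)^2$ by adjacency to $u$, to $v$, or to neither, and expanding $(deg_G(w)-1)^2$ on the neighbours, gives
\[
  M_1^2\bigl(G-\{u,v\}\bigr)=M_1^2-deg_G(u)^2-deg_G(v)^2-2\bigl(S_1(u)+S_1(v)\bigr)+3\bigl(deg_G(u)+deg_G(v)\bigr)-2 .
\]
Summation over $uv\in E(G)$ then uses only two ``transfer'' identities that redistribute a vertex weight along its incident edges, namely
\[
  \sum_{uv\in E}\bigl[deg_G(u)^{k}+deg_G(v)^{k}\bigr]=M_1^{k+1},\qquad
  \sum_{uv\in E}\bigl[S_k(u)+S_k(v)\bigr]=\alpha_{1,k}
\]
(with $\alpha_{1,1}=2M_2^1$); substituting yields $P_2M_1^2(G)=(m+3)M_1^2-M_1^3-4M_2^1-2m$. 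The cases $M_1^3$ and $M_1^4$ are identical in spirit: expanding $(deg_G(w)-1)^3$ and $(deg_G(w)-1)^4$ merely brings in the higher transfer sums built from $S_2,S_3$, which collapse through the second identity to $\alpha_{1,2},\alpha_{1,3}$, while the leading terms produce $M_1^4,M_1^5$.

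\medskip
The substantial work is in $M_2^1$ and in the product $mM_1^2$, and this is where girth $\geq 5$ is genuinely needed. For $M_2^1(G-\{u,v\})=\sum_{xy}deg_{G-\{u,v\}}(x)\,deg_{G-\{u,v\}}(y)$, writing $deg_{G-\{u,v\}}(x)=deg_G(x)-\epsilon_x$ with $\epsilon_x\in\{0,1\}$ expands the product as $deg_G(x)deg_G(y)-\epsilon_x deg_G(y)-\epsilon_y deg_G(x)+\epsilon_x\epsilon_y$; the decisive point is that the absence of triangles and $4$-cycles forces $\epsilon_x\epsilon_y=0$ on every surviving edge, so the quadratic cross term vanishes. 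After summation over $uv$ the remaining terms are sums over the second neighbourhood of the deleted edge, indexed by configurations of two or three consecutive edges ($P_3$, $P_4$), and these are exactly what the edge-adjacency invariants $EM_2$ and $\alpha_{1,2}$ encode, reproducing the stated formula for $P_2M_2^1(G)$. For $P_2(mM_1^2)$ one must sum the \emph{product} $m(G-\{u,v\})\,M_1^2(G-\{u,v\})$ with $m(G-\{u,v\})=m-deg_G(u)-deg_G(v)+1$; being a product of two deletion-dependent quantities it is not recoverable from $P_2m$ and $P_2M_1^2$ alone, and cross-multiplying the edge-count defect against the neighbour-degree defect is what additionally generates $\beta$ and $EM_1$.

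\medskip
The main obstacle is exactly this last bookkeeping: organising, for $M_2^1$ and $mM_1^2$, the summation over the second neighbourhood of the deleted edge without double counting, and then matching each resulting sum to a single one of $EM_1,EM_2,\beta,\alpha_{1,2}$ with the correct integer coefficient. Girth $\geq 5$ is what makes this feasible, since the only local shapes that arise are disjoint edges and paths $P_3$ hanging off $u$ or $v$ (no triangle or $4$-cycle can fuse two of these onto each other), so no degenerate overlaps survive. Verifying that these are the \emph{only} contributions, by running the expansion against the finitely many admissible local shapes, is the step I expect to demand the most care.
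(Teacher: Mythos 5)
You should first note that the paper does not actually prove this lemma: it is quoted verbatim from the submitted manuscript of Das, Ghalavand and Ashrafi (reference \cite{3}), so there is no in-paper argument to compare against. Your direct expansion of $P_2f(G)=\sum_{uv\in E}f(G-\{u,v\})$ is nevertheless the natural (and essentially the only) route. Your local description of $G-\{u,v\}$ under girth $\geq 5$ is correct --- $N(u)\cap N(v)=\emptyset$, no edge inside $N(u)\setminus\{v\}$ or $N(v)\setminus\{u\}$ and none between them, so each surviving edge loses at most one unit of degree at at most one endpoint --- and your two transfer identities $\sum_{uv\in E}[deg_G(u)^k+deg_G(v)^k]=M_1^{k+1}$ and $\sum_{uv\in E}[S_k(u)+S_k(v)]=\alpha_{1,k}$ are exactly what is needed. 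I checked that your template reproduces items (1)--(3): e.g.\ $M_1^2(G-\{u,v\})=M_1^2-deg_G(u)^2-deg_G(v)^2-2(S_1(u)+S_1(v))+3(deg_G(u)+deg_G(v))-2$ sums to $(m+3)M_1^2-M_1^3-4M_2^1-2m$, and the analogous expansions of $(d-1)^3$ and $(d-1)^4$ give (2) and (3) with the stated coefficients.

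The one reservation is that for items (4) and (5) you have a plan rather than a proof. Your key structural observation --- that girth $\geq 5$ kills the cross term $\epsilon_x\epsilon_y$ in $M_2^1(G-\{u,v\})$ --- is correct and is indeed where the girth hypothesis earns its keep, and your identification of which invariants ($EM_2$, $\alpha_{1,2}$, and for item (5) additionally $\beta$ and $EM_1$ from the product of the edge-count defect with the degree defect) must absorb the second-neighbourhood sums is consistent with the stated right-hand sides. But the entire content of those two identities is the integer coefficients in front of $EM_1$, $EM_2$, $\beta$ and $\alpha_{1,2}$, and you have not carried out the bookkeeping that produces them; you yourself flag this as ``the step I expect to demand the most care.'' As it stands the argument verifies (1)--(3) and establishes the framework for (4)--(5) without completing them, so you should either finish that computation or state explicitly that those two items are taken from \cite{3}. (Minor point: the term $(M_1)^2$ in item (5) is almost certainly a typo for $(M_1^2)^2$, as the parallel formula for $S(G)$ in Lemma \ref{6lmm2}(5) confirms.)
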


The following theorem is crucial in our main result \cite{2-2}.

\begin{theorem}\label{5th2}
Let $G$ be a graph with $m$ edges. Then
\begin{enumerate}
\item $m_5(S(G))=\frac{1}{15}m^2\Big[4m^3-20m^2+15m+15\Big]+\frac{1}{12}m\Big[8M_1^3m-8M_1^2m^2
 +3(M_1^2)^2+36M_1^2m-28M_1^3-24M_1^2-6M_1^4-24M_2^1\Big]+\alpha_{1,2}
 -\frac{1}{6}M_1^2\Big[3M_1^2+M_1^3+6\Big]+2M_2^1+\frac{1}{5}M_1^5+M_1^4+M_1^3.$
\item {\small $  M_1^2(S(G))=M_1^2+4m,~M_1^3(S(G))=M_1^3+8m,~M_1^4(S(G))=M_1^4+16m,
M_1^5(S(G))$} $=M_1^5+32m,~ \alpha_{1,2}(S(G))  =4M_1^2+2M_1^3,
\alpha_{1,3}(S(G))  =8M_1^2+2M_1^4,~\beta(S(G)) =2M_1^2+M_1^4-M_1^3,~
M_2^1(S(G))=2M_1^2,$ $~ M_2^2(S(G))=4M_1^3,~ EM_1(S(G))=M_1^3,~ EM_2(S(G)) =M_2^1+\frac{1}{2} M_1^4-\frac{1}{2}M_1^3. $
\item $P_2{(M_1^2)^2}(S(G))=(2m-10)(M_1^2)^2+(16m^2-2M_1^3-40m)M_1^2+32m^3-8mM_1^3
+13M_1^3+6M_1^4+M_1^5+24M_2^1+4\alpha_{1,2}$.
\item  $P_2{(m^2M_1^2)}(S(G))=32m^4+(8M_1^2-32)m^3-(4M_1^3+44M_1^2-8)m^2+(20M_1^3
-4(M_1^2)^2+30M_1^2+4M_1^4+16M_2^1)m+M_1^3M_1^2
+2(M_1^2)^2-7M_1^3-5M_1^2-5M_1^4-M_1^5 -8M_2^1-2\alpha_{1,2}$.
\item  $P_2{(mM_1^3)}(S(G))=32m^3+(4M_1^3-24)m^2-(8M_1^3+16M_1^2+2M_1^4)m+4m - (M_1^2-10)M_1^3+6M_1^2+M_1^4+M_1^5-6M_2^1+3\alpha_{1,2}$.
\item  $P_2{(mM_2^1)}(S(G))=\big(8M_1^2+8\big)m^2-(4M_1^3+10M_1^2+4M_2^1+4)m-2(M_1^2)^2
+2M_1^3+M_1^2+2M_1^4+8M_2^1+\alpha_{1,2}$.
\item $P_2{EM_2}(S(G)) = \frac{1}{2}m\big(4M_2^1-2M_1^3+2M_1^4+ 4\big)+\frac{11}{2}M_1^3-2\alpha_{1,2}-\frac{7}{2}M_1^2-\frac{3}{2}M_1^4-\frac{1}{2}M_1^5$.
\end{enumerate}

\end{theorem}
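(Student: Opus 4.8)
My entire approach is organized around a single structural picture of the subdivision graph. Writing $w_e$ for the vertex inserted on an edge $e$ of $G$, the vertices of $S(G)$ split into the original vertices $V(G)$, each retaining its degree $deg_G(v)$, and the $m$ new vertices $w_e$, each of degree exactly $2$; every edge of $S(G)$ joins an original $v$ to some $w_e$ with $e = vu$, and its edge-degree is $deg_G(v) + 2 - 2 = deg_G(v)$. Since subdivision doubles every cycle length, $S(G)$ has girth $\geq 6$, so the girth-$\geq 5$ identities of Lemma \ref{5lm2} apply to $S(G)$ and to every vertex-deleted subgraph of it, while Lemma \ref{5ml1} is unconditional. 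Granting these facts, part (2) becomes a one-line bookkeeping for each identity: I split the defining sum into its original-vertex and new-vertex (or original-edge half) parts and use $deg = 2$ on the new vertices. This yields $M_1^\lambda(S(G)) = M_1^\lambda(G) + 2^\lambda m$ at a stroke, and likewise the $\alpha$, $M_2$, $EM$ and $\beta$ formulas; for $EM_2$ and $\beta$ I would separate the two adjacency types in $S(G)$ — two halves meeting at a degree-$2$ vertex $w_e$, versus edges meeting at an original $v$, of which there are $\binom{deg_G(v)}{2}$ pairs, each of common edge-degree $deg_G(v)$.

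The engine behind parts (3)–(7) is the fully explicit effect of a two-vertex deletion. Fixing an edge $(v, w_{vu})$ of $S(G)$ and deleting its two endpoints removes $v$ (degree $deg_G(v)$) and $w_{vu}$ (degree $2$), lowers $deg(u)$ by exactly one, and turns each of the $deg_G(v) - 1$ sibling vertices $w_{vt}$ ($t \sim v$, $t \neq u$) from degree $2$ into degree $1$, leaving everything else untouched. Hence any degree-based invariant $f$ of $S(G) - \{v, w_{vu}\}$ equals its value on $S(G)$ corrected by a purely local term in $deg_G(v)$ and $deg_G(u)$; for instance $M_1^2\big(S(G) - \{v, w_{vu}\}\big) = M_1^2(G) + 4m - deg_G(v)^2 - 3\,deg_G(v) - 2\,deg_G(u)$. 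Since the two edges of $S(G)$ lying over an original edge $vu$ are $(v, w_{vu})$ and $(u, w_{vu})$, summing over $E(S(G))$ is summing over ordered pairs, giving $P_2 f(S(G)) = \sum_{(v,u):\,vu \in E(G)} f\big(S(G) - \{v, w_{vu}\}\big)$.

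Executing this for $f \in \{(M_1^2)^2,\, m^2 M_1^2,\, m M_1^3,\, m M_2^1,\, EM_2\}$ is exactly what produces parts (3)–(7). In each case I would write $f\big(S(G) - \{v, w_{vu}\}\big)$ as a polynomial in $deg_G(v)$, $deg_G(u)$ and the global invariants of $G$ — for the products this means expanding a square or a product, e.g. $\big[M_1^2(G) + 4m - deg_G(v)^2 - 3\,deg_G(v) - 2\,deg_G(u)\big]^2$ for part (3) — and then run the ordered edge-sum; the resulting sums $\sum deg(v)^a deg(u)^b$ collapse to the standard invariants $M_1^2, \dots, M_1^5$, $M_2^1, M_2^2$, $\alpha_{1,2}, \alpha_{1,3}$ and $\beta$, with the girth bound guaranteeing that the local configurations behind $\beta$ and $EM_2$ never overlap. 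Part (1) I would then assemble from the matching recurrence \eqref{5eq1} in the form $m_5(S(G)) = \tfrac15 P_2 m_4(S(G))$: expand $m_4$ as a fixed polynomial in the degree invariants (legitimate since $S(G)$ and all its vertex-deletions have girth $\geq 5$) and apply $P_2$ monomial by monomial — the product monomials via parts (3)–(7), the pure powers $m^4, M_1^4, \dots$ via Lemmas \ref{5ml1} and \ref{5lm2}, and the remaining mixed terms $\alpha_{1,2}, \beta, EM_1$ by the same deletion bookkeeping — before rewriting the outcome in $G$-invariants through part (2).

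The hard part will be squarely parts (3)–(7), and the difficulty is one of volume and fragility rather than of any single clever idea. Each product invariant forces me to track simultaneously the degree drop at $u$ and the $deg_G(v) - 1$ independent $2 \to 1$ drops at the sibling subdivision vertices, then to expand a square or triple product, and finally to reassemble a long ordered-edge sum into $M_1^4, M_1^5, \alpha_{1,3}, M_2^2$ and $\beta$ without a single sign or coefficient slip. It is here — together with the monomial-by-monomial assembly of part (1) stacked on top of it — that essentially all of the genuine labour of the theorem lives.
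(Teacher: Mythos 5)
Your proposal is correct and follows exactly the method this paper itself uses: the paper does not reprove Theorem \ref{5th2} (it imports it from the authors' earlier work \cite{2-2}), but every analogous computation it does carry out --- Lemmas \ref{6lmm0}--\ref{6lmm9} and \ref{6lmm19}--\ref{6lmm21} --- proceeds by precisely your two-vertex-deletion bookkeeping, writing $P_2f(S(G))=\sum_{v\in V}\sum_{uv\in E}f\big(S(G)-\{v,w_{vu}\}\big)$ with the local correction $[2m-deg_G(v)-1]$ for $m$ and $[M_1^2(S(G))-deg_G(v)^2-3deg_G(v)-2deg_G(u)]$ for $M_1^2$, matching your formulas, and then assembling $m_5(S(G))$ from the girth-$\geq 5$ matching formula (Theorem \ref{dasth1}, equivalently one application of the recurrence \eqref{5eq1} to $m_4$). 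The only caveat is that your write-up is a strategy outline whose correctness rests on carrying the expansions through without arithmetic slips, but the structural claims on which it rests are all verifiably right.
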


\section{Laplacian Coefficients and Degree-Based Invariants }
The aim of this section is to present an exact formula for the coefficient $c_{n-6}$ of the Laplacian polynomial in terms of some degree-based graphs invariants.

\begin{lemma}\label{6lmm0}
Let $G$ be a graph with $n$ vertices and $m$ edges. Then
$M_1^6(S(G))$ $=$ $M_1^6+64m$, $\alpha_{1,4}(S(G))$ $=$ $2M_1^5+16M_1^2$, $\alpha_{2,3}(S(G))$ $=$ $4M_1^4+8M_1^3$.
\end{lemma}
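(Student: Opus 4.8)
The plan is to exploit the elementary structure of the subdivision graph $S = S(G)$. Recall that $V(S) = V(G) \cup W$, where $W = \{w_e : e \in E(G)\}$ consists of the $m$ inserted vertices, one per edge. An original vertex $u \in V(G)$ retains its degree, so $deg_S(u) = deg_G(u)$, while each inserted vertex has $deg_S(w_e) = 2$. Moreover, every edge $e = uv$ of $G$ is replaced in $S$ by exactly two edges, $uw_e$ and $w_e v$, and every edge of $S$ arises this way; hence each edge of $S$ joins an original vertex to a degree-$2$ vertex. These three structural facts are all the computation requires.

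For $M_1^6(S)$ I would split the vertex sum according to $V(S) = V(G) \cup W$:
\[
M_1^6(S) = \sum_{u \in V(G)} deg_G(u)^6 + \sum_{e \in E(G)} 2^6 = M_1^6(G) + 64m,
\]
since $|W| = m$ and each inserted vertex contributes $2^6 = 64$.

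For the two mixed-degree invariants I would first record the contribution of a single original edge $e = uv$ to the edge sum defining $\alpha_{\lambda,\xi}(S)$. Using $deg_S(w_e)=2$, the two edges $uw_e$ and $w_e v$ contribute
\[
\bigl(deg_G(u)^\lambda 2^\xi + deg_G(u)^\xi 2^\lambda\bigr) + \bigl(deg_G(v)^\lambda 2^\xi + deg_G(v)^\xi 2^\lambda\bigr).
\]
Summing over all $e = uv \in E(G)$ and separating the two exponents, I would apply the handshake-type identity
\[
\sum_{uv \in E(G)}\bigl(deg_G(u)^k + deg_G(v)^k\bigr) = \sum_{w \in V(G)} deg_G(w)^{k+1} = M_1^{k+1}(G),
\]
valid because each vertex $w$ contributes $deg_G(w)^k$ once for each of its $deg_G(w)$ incident edges. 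This yields the compact general formula $\alpha_{\lambda,\xi}(S) = 2^\xi M_1^{\lambda+1}(G) + 2^\lambda M_1^{\xi+1}(G)$. Specializing to $(\lambda,\xi) = (1,4)$ gives $2^4 M_1^2 + 2^1 M_1^5 = 16M_1^2 + 2M_1^5$, and to $(\lambda,\xi)=(2,3)$ gives $2^3 M_1^3 + 2^2 M_1^4 = 8M_1^3 + 4M_1^4$, exactly as claimed.

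There is no genuine obstacle here; the argument is routine once the three facts about $S(G)$ are in hand. The only point requiring care is the bookkeeping of which power of $M_1$ each term produces: the exponent $\lambda$ attached to the original endpoint becomes $M_1^{\lambda+1}$ but is weighted by $2^\xi$ (the \emph{other} exponent of the degree-$2$ vertex), so one must keep the crossed pairing $(2^\xi, M_1^{\lambda+1})$ and $(2^\lambda, M_1^{\xi+1})$ straight. I note that this lemma simply extends the list in Theorem \ref{5th2}(2) to the three higher-order invariants $M_1^6$, $\alpha_{1,4}$, $\alpha_{2,3}$ needed for the $c_{n-6}$ computation, and the same template reproduces every entry of that list.
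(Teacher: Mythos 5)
Your proposal is correct and follows essentially the same route as the paper: both enumerate the edges of $S(G)$ as pairs consisting of an original vertex and an incident edge of $G$, use $deg_{S(G)}(w_e)=2$ for each subdivision vertex, and collapse the resulting sums to powers of $M_1$ via the handshake identity. The general formula $\alpha_{\lambda,\xi}(S(G)) = 2^{\xi}M_1^{\lambda+1} + 2^{\lambda}M_1^{\xi+1}$ you extract is a mild (and correct) repackaging of the paper's direct computation, not a different argument.
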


\begin{proof}
Apply definition of $S(G)$,  to prove that $M_1^6(S(G))$ $=$ $M_1^6$ $+$ $64m$, $\alpha_{1,4}(S(G))$ $=$ $\sum_{v\in V}\sum_{uv\in E}[2deg_G(v)^4+16deg_G(v)]$ $=$ $\sum_{v\in V}[2deg_G(v)^5+16deg_G(v)^2]$ $=$ $2M_1^5+16M_1^2$ and
$\alpha_{2,3}(S(G))$ $=$ $\sum_{v\in V}\sum_{uv\in E}[4deg_G(v)^3+8deg_G(v)^2]$
$=$ $\sum_{v\in V}[4deg_G(v)^4+8deg_G(v)^3]=4M_1^4+8M_1^3$.
\end{proof}

The next lemma is a direct consequence of Lemmas \ref{5ml1}, \ref{6lmm0} and Theorem \ref{5th2}(2).

\begin{lemma}\label{6lmm1}
Let $G$ be a graph with $n$ vertices and $m$ edges. Then
\begin{enumerate}
\item $P_2{m}(S(G))=4m^2-2m-M_1^2$.

\item $P_2{m^2}(S(G))=M_1^3+2M_1^2-2m(2M_1^2-4m^2+4m-1)$.

\item $P_2{m^3}(S(G))=16m^4-24m^3-(12M_1^2-12)m^2+(6M_1^3+12M_1^2-2)m-3M_1^3-3M_1^2-M_1^4$.

\item $P_2{m^4}(S(G)) = 32m^5-64m^4- (32M_1^2-48)m^3 + (24M_1^3+48M_1^2-16)m^2 - (24M_1^3+24M_1^2+8M_1^4-2)m+6M_1^3 +4M_1^2+4M_1^4+M_1^5$.

\item $P_2{m^5}(S(G))=64m^6-160m^5- (80M_1^2-160)m^4+ (80M_1^3+160M_1^2-80)m^3 - (120M_1^3+120M_1^2+40M_1^4-20)m^2 + (60M_1^3+40M_1^2+40M_1^4+10M_1^5-2)m-10M_1^3-5M_1^2-10M_1^4-5M_1^5-M_1^6$.
\end{enumerate}
\end{lemma}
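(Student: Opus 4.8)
The plan is to treat Lemma~\ref{6lmm1} as a pure substitution exercise layered on top of Lemma~\ref{5ml1}. The crucial observation is that the five identities of Lemma~\ref{5ml1} express $P_2 m^k(H)$, for an \emph{arbitrary} graph $H$ and $1 \le k \le 5$, entirely in terms of the edge count $m(H)$ together with the degree-based invariants $M_1^2(H)$, $M_1^3(H)$, $M_1^4(H)$, $M_1^5(H)$, $M_1^6(H)$, $M_2^1(H)$, $M_2^2(H)$, $\alpha_{1,2}(H)$, $\alpha_{1,3}(H)$, $\alpha_{1,4}(H)$ and $\alpha_{2,3}(H)$. None of these identities carries a girth hypothesis, so each may be applied with $H = S(G)$ without restriction. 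Consequently it suffices to know the values of all these invariants for the subdivision graph and to substitute.

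First I would collect the required invariant values for $S(G)$. From the definition of the subdivision graph one has $m(S(G)) = 2m$. Theorem~\ref{5th2}(2) supplies $M_1^2(S(G)) = M_1^2 + 4m$, $M_1^3(S(G)) = M_1^3 + 8m$, $M_1^4(S(G)) = M_1^4 + 16m$, $M_1^5(S(G)) = M_1^5 + 32m$, $M_2^1(S(G)) = 2M_1^2$, $M_2^2(S(G)) = 4M_1^3$, $\alpha_{1,2}(S(G)) = 4M_1^2 + 2M_1^3$ and $\alpha_{1,3}(S(G)) = 8M_1^2 + 2M_1^4$, while Lemma~\ref{6lmm0} adds the three remaining higher-order quantities $M_1^6(S(G)) = M_1^6 + 64m$, $\alpha_{1,4}(S(G)) = 2M_1^5 + 16M_1^2$ and $\alpha_{2,3}(S(G)) = 4M_1^4 + 8M_1^3$. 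These are exactly the invariants occurring on the right-hand sides of Lemma~\ref{5ml1}, so once they are all in hand the substitution is fully determined.

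Then I would carry out the substitution case by case. For $k=1$, applying Lemma~\ref{5ml1}(1) gives $P_2 m(S(G)) = m(S(G))^2 + m(S(G)) - M_1^2(S(G)) = (2m)^2 + 2m - (M_1^2 + 4m)$, which collapses to $4m^2 - 2m - M_1^2$ and establishes item~(1). The remaining cases follow the same recipe, with new invariants entering at each level: the list grows from $\{m,\,M_1^2\}$ at $k=1$ to include $M_1^3,\,M_2^1$ at $k=2$, then $M_1^4,\,\alpha_{1,2}$ at $k=3$, then $M_1^5,\,M_2^2,\,\alpha_{1,3}$ at $k=4$, and finally $M_1^6,\,\alpha_{1,4},\,\alpha_{2,3}$ at $k=5$. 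This incremental structure is precisely why Lemma~\ref{6lmm0} is invoked only for the top case. In every instance the substituted expression is a polynomial in $m$ whose coefficients are linear combinations of the $G$-invariants, and regrouping by powers of $m$ produces the claimed closed form.

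The one place that demands care rather than ingenuity is the $k=5$ computation. The starting formula Lemma~\ref{5ml1}(5) already exhibits degree-six growth in $m(H)$, and substituting $m(H) = 2m$ together with eleven separate invariant identities produces a large number of monomials that must be regrouped; a single sign or coefficient slip propagates into the final answer. I expect this bookkeeping---expanding the $(2m)^j$ factors, inserting the affine shifts $M_1^j + c_j m$, and cancelling the many cross terms---to be the main obstacle, and I would guard against it by verifying coefficients that have a single source: the $m^6$ coefficient, which can only arise from $m(S(G))^6 = 64m^6$, and the $M_1^6$ coefficient, which can only arise from the term $-M_1^6(S(G)) = -(M_1^6 + 64m)$. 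Both match the claimed $64m^6$ and $-M_1^6$, giving confidence that the full regrouping is correct.
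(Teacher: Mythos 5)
Your proposal is correct and takes essentially the same route as the paper: the paper presents Lemma~\ref{6lmm1} as ``a direct consequence of Lemmas~\ref{5ml1}, \ref{6lmm0} and Theorem~\ref{5th2}(2),'' which is precisely the substitution $H=S(G)$ (with $m(S(G))=2m$ and the listed invariant values) that you carry out. Your accounting of which invariants enter at each level, and your spot-checks of the $m^6$ and $M_1^6$ coefficients in case (5), are consistent with the stated formulas.
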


It is easy to see that girth $S(G)$ $\geq6$. Therefore, Lemma \ref{5lm2} and Theorem \ref{5th2}(2) imply the following lemma:

\begin{lemma}\label{6lmm2}
Let $G$ be a graph on $n$ vertices and $m$ edges. Then
\begin{enumerate}
\item $P_2{M_1^2}(S(G))= (2m-5)M_1^2+8m^2-M_1^3$.

\item  $P_2{M_1^3}(S(G))=16m^2+(2M_1^3-4)m-3M_1^3-4M_1^2-M_1^4 $.

\item  $P_2{M_1^4}(S(G))=(2m-4)M_1^4+32m^2+6M_1^3-19M_1^2-M_1^5 $.

\item $P_2{M_2^1}(S(G))= 4mM_1^2-2M_1^3-3M_1^2-2M_2^1+4m$.

\item $P_2{(mM_1^2)}(S(G)) = 16m^3+ (4M_1^2-8)m^2-(2M_1^3+16M_1^2)m-(M_1^2)^2+4M_1^3+5M_1^2+4M_2^1+M_1^4$.
\end{enumerate}
\end{lemma}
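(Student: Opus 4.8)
The plan is to exploit the hypothesis that the subdivision graph $S(G)$ has girth at least $6$, hence at least $5$, so that Lemma~\ref{5lm2} applies verbatim to the graph $H=S(G)$. Each part of Lemma~\ref{5lm2} rewrites the corresponding quantity $P_2f(S(G))$ as a polynomial in $m(S(G))=2m$ and the degree-based invariants $M_1^2,M_1^3,M_1^4,M_1^5,M_2^1,\alpha_{1,2},\alpha_{1,3},\beta,EM_1,EM_2$ evaluated on $S(G)$. The second step is then purely substitutional: Theorem~\ref{5th2}(2) supplies closed expressions for every one of these invariants of $S(G)$ in terms of the invariants of $G$ together with $m$, and inserting them converts each right-hand side into a polynomial in $m$ and the invariants of $G$ alone.

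Concretely, for part~1 I would start from Lemma~\ref{5lm2}(1), $P_2M_1^2(H)=(m(H)+3)M_1^2(H)-M_1^3(H)-4M_2^1(H)-2m(H)$, set $m(H)=2m$, and substitute $M_1^2(S(G))=M_1^2+4m$, $M_1^3(S(G))=M_1^3+8m$ and $M_2^1(S(G))=2M_1^2$ from Theorem~\ref{5th2}(2); collecting like terms yields $(2m-5)M_1^2+8m^2-M_1^3$. Parts~2--4 follow the same recipe from Lemma~\ref{5lm2}(2)--(4). Part~2 additionally needs $M_1^4(S(G))=M_1^4+16m$ and $\alpha_{1,2}(S(G))=4M_1^2+2M_1^3$; part~3 needs in addition $M_1^5(S(G))=M_1^5+32m$ and $\alpha_{1,3}(S(G))=8M_1^2+2M_1^4$; and part~4 needs $EM_2(S(G))=M_2^1+\tfrac{1}{2}M_1^4-\tfrac{1}{2}M_1^3$. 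In each case the pure powers of $m$ are expected to cancel almost completely, so the main effort is careful bookkeeping of the coefficients of $M_1^2,M_1^3,M_1^4,M_2^1$ and $m$.

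Part~5 is where the work concentrates and where I expect the only real difficulty. Here one begins from Lemma~\ref{5lm2}(5) for $P_2(mM_1^2)$, whose right-hand side already involves eight distinct invariants, including $\beta$, $EM_1$ and a square $(M_1)^2$ (reading the symbol $M_1$ there as the first Zagreb index $M_1^2$, so that $(M_1(H))^2=(M_1^2(H))^2$). One must then substitute, from Theorem~\ref{5th2}(2), the extra subdivision values $\beta(S(G))=2M_1^2+M_1^4-M_1^3$, $EM_1(S(G))=M_1^3$ and $EM_2(S(G))=M_2^1+\tfrac{1}{2}M_1^4-\tfrac{1}{2}M_1^3$, expand $(M_1^2+4m)^2$, and regroup. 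The delicate points are the correct interpretation of $(M_1)^2$, which alone produces the $-(M_1^2)^2$ term and part of the $-16mM_1^2$ term, and the several cancellations (in particular the complete vanishing of the linear-in-$m$ contributions) that must be tracked to reach the stated $16m^3+(4M_1^2-8)m^2-(2M_1^3+16M_1^2)m-(M_1^2)^2+4M_1^3+5M_1^2+4M_2^1+M_1^4$. Beyond this single algebraic simplification the argument is entirely mechanical, since girth $\ge 5$ is the only structural input and is guaranteed by subdivision.
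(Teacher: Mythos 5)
Your proposal is correct and coincides exactly with the paper's argument: the paper derives this lemma precisely by observing that $S(G)$ has girth at least $6$, applying Lemma~\ref{5lm2} to $H=S(G)$, and substituting the subdivision-graph invariants from Theorem~\ref{5th2}(2). Your sample computation for part~1 checks out, and your reading of $(M_1)^2$ as $(M_1^2(H))^2$ in part~5 is the intended one.
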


\begin{lemma}\label{6lmm3}
Let $G$ be a graph with $n$ vertices and $m$ edges. Then
$P_2{M_1^5}(S(G))=2mM_1^5+64m^2-M_1^6-4m-5M_1^5+10M_1^4-10M_1^3-26M_1^2$.
\end{lemma}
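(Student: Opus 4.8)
The plan is to unfold the operator $P_2$ directly on the subdivision graph $S=S(G)$ and to track how each single edge-deletion perturbs the degree sequence. By the definition $Hf(G)=\sum_{X\in\mathcal S_H(G)}f(G-X)$ specialised to $H=P_2$, we have $P_2{M_1^5}(S)=\sum_{xy\in E(S)}M_1^5(S-x-y)$, the sum ranging over all edges of $S$ with both endpoints deleted. The key structural observation is that every edge of $S$ joins an original vertex $u$ of $G$ (whose degree in $S$ equals $deg_G(u)$) to a subdivision vertex $w_e$ of degree $2$, where $e$ is an edge of $G$ incident with $u$. Hence the edges of $S$ are in bijection with the incidences $(u,e)$, $u\in e$, and the sum has exactly $2m$ terms.

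First I would compute, for a fixed incidence $(u,e)$ with $e=uv$, the quantity $M_1^5(S-u-w_e)$ relative to $M_1^5(S)$. Deleting $u$ and $w_e$ removes their contributions $deg_G(u)^5$ and $2^5=32$; moreover the $deg_G(u)-1$ remaining subdivision neighbours of $u$ each drop from degree $2$ to degree $1$ (contributing $1^5-2^5=-31$ apiece), and the second endpoint $v$ of $e$ drops from $deg_G(v)$ to $deg_G(v)-1$. Since the girth of $S$ is at least $6$, the vertices $u$ and $w_e$ have no common neighbour, so these degree changes do not interfere and nothing is corrected twice. This yields the local identity
\[
M_1^5(S-u-w_e)=M_1^5(S)-deg_G(u)^5-32-31\big(deg_G(u)-1\big)-\big[deg_G(v)^5-(deg_G(v)-1)^5\big].
\]

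Next I would sum this identity over all $2m$ incidences, converting incidence sums into the invariants $M_1^k(G)$ by the bookkeeping that each vertex $u$ occurs as a first coordinate exactly $deg_G(u)$ times and as a second endpoint exactly $deg_G(u)$ times. Thus $\sum_{(u,e)}deg_G(u)^5=\sum_u deg_G(u)^6=M_1^6$, the constant $-32$ contributes $-64m$, the factor $-31\big(deg_G(u)-1\big)$ sums to $-31(M_1^2-2m)$ via $\sum_u deg_G(u)^2=M_1^2$ and $\sum_u deg_G(u)=2m$, and after expanding $(deg_G(v)-1)^5$ and weighting each term by the multiplicity $deg_G(v)$ the final bracket sums to $5M_1^5-10M_1^4+10M_1^3-5M_1^2+2m$. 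Substituting $M_1^5(S)=M_1^5+32m$ from Theorem \ref{5th2}(2) and collecting like terms should reproduce the claimed expression $2mM_1^5+64m^2-M_1^6-4m-5M_1^5+10M_1^4-10M_1^3-26M_1^2$.

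The only genuine obstacle is combinatorial bookkeeping rather than analytic difficulty: one must verify that the three families of degree changes (the deleted pair $u,w_e$, the $deg_G(u)-1$ subdivision vertices collapsing to degree $1$, and the single neighbour $v$ losing one edge) are mutually disjoint. This is precisely what $\mathrm{girth}(S)\ge 6$ guarantees, because an original vertex and an adjacent subdivision vertex can share no neighbour; once this is secured, the remainder is a routine polynomial expansion and regrouping of the $M_1^k$ terms.
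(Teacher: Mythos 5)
Your proposal is correct and follows essentially the same route as the paper: both unfold $P_2M_1^5(S(G))=\sum_{xy\in E(S)}M_1^5(S-x-y)$ over the $2m$ incidences $(u,e)$, compute the per-edge degree corrections (your local identity regroups, after summing, into the paper's integrand $deg_G(v)^5+5deg_G(v)^4-10deg_G(v)^3+10deg_G(v)^2+26deg_G(v)+2$), and finish with $M_1^5(S(G))=M_1^5+32m$ from Theorem \ref{5th2}(2). The only cosmetic difference is that you justify the non-interference of the degree changes explicitly (bipartiteness/girth of $S$), which the paper leaves implicit.
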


\begin{proof}
Apply definition of $P_2{M_1^5}(S(G))$ to show that
$P_2{M_1^5}(S(G))$ $=$ $2mM_1^5(S(G))$ $-$ {\small$\sum_{v\in V}\sum_{uv}[deg_G(v)^5 +5deg_G(v)^4-10deg_G(v)^3$
$+$ $10deg_G(v)^2+26deg_G(v)+2]$ $=$ $2mM_1^5(S(G))$} $-$ $\sum_{v\in V}[deg_G(v)^6 +5deg_G(v)^5-10deg_G(v)^4$
$+$ $10deg_G(v)^3+26deg_G(v)^2+2deg_G(v)]$. Now the proof follows from Lemma \ref{5th2}(2) and simple calculations.
\end{proof}

\begin{lemma}\label{6lmm4}
Let $G$ be a graph with $n$ vertices and $m$ edges. Then
$P_2{(m^3M_1^2)}(S(G))=64m^5+(16M_1^2-96)m^4+(-8M_1^3-112M_1^2+48)m^3 + (-12(M_1^2)^2+72M_1^3+120M_1^2+12M_1^4+48M_2^1-8)m^2
+(12(M_1^2)^2+(6M_1^3-44)M_1^2-54M_1^3-48M_2^1-34M_1^4-6M_1^5-12\alpha_{1,2})m- 3(M_1^2)^2+(-3M_1^3-M_1^4+5)M_1^2+10M_1^3+12M_2^1+12M_1^4+6M_1^5+M_1^6+6\alpha_{1,2}+2\alpha_{1,3}$.
\end{lemma}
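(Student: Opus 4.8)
The plan is to compute $P_2{(m^3M_1^2)}(S(G))$ directly from the structure of the subdivision graph, in the same spirit as the proof of Lemma \ref{6lmm3}. The crucial feature of $S(G)$ is that every one of its $2m$ edges joins an original vertex $x\in V(G)$ to the degree-two vertex that subdivides an edge $xy\in E(G)$ incident with $x$; thus the edges of $S(G)$ are in bijection with the ordered pairs $(x,xy)$, each unordered edge $xy$ contributing the two pairs $(x,xy)$ and $(y,xy)$. For such an edge $e$ of $S(G)$, write $G_e$ for the graph obtained from $S(G)$ by deleting the two endpoints of $e$ together with their incident edges. Since deleting $x$ and the subdivision vertex removes exactly $\deg_G(x)+1$ edges, we get $m(G_e)=2m-\deg_G(x)-1$.

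First I would determine $M_1^2(G_e)$. Starting from $M_1^2(S(G))=M_1^2+4m$ (Theorem \ref{5th2}(2)), deleting the endpoints of $e$ removes the squared degrees of $x$ and of the subdivision vertex, drops the remaining $\deg_G(x)-1$ subdivision neighbours of $x$ from degree $2$ to degree $1$, and lowers $\deg_G(y)$ by one; collecting these changes yields
\begin{equation*}
M_1^2(G_e)=M_1^2+4m-\deg_G(x)^2-3\deg_G(x)-2\deg_G(y).
\end{equation*}

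The second step is to substitute these two expressions into $m(G_e)^3M_1^2(G_e)$ and sum over all $2m$ edges $e$ of $S(G)$, i.e. over all ordered pairs $(x,xy)$. Setting $A=2m-1$ and $C=M_1^2+4m$, the summand is the polynomial $(A-\deg_G(x))^3\big(C-\deg_G(x)^2-3\deg_G(x)-2\deg_G(y)\big)$, which I would expand into monomials in $\deg_G(x)$ and $\deg_G(y)$. Summing over ordered pairs turns each monomial into a standard degree-based invariant via the identities
\begin{equation*}
\sum_{(x,xy)}\deg_G(x)^k=M_1^{k+1},\qquad \sum_{(x,xy)}\deg_G(x)^k\deg_G(y)=\alpha_{1,k},\qquad \sum_{(x,xy)}\deg_G(x)\deg_G(y)=2M_2^1,
\end{equation*}
so that the top-degree terms $\deg_G(x)^5$ and $\deg_G(x)^3\deg_G(y)$ produce the $M_1^6$ and $\alpha_{1,3}$ appearing on the right-hand side. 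Re-collecting by powers of $m$ after inserting $A=2m-1$ and $C=M_1^2+4m$ gives the asserted formula.

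The computation is elementary, so the only real difficulty is bookkeeping. The factor $-2\deg_G(y)$ in $M_1^2(G_e)$ is what forces the mixed invariants $M_2^1$, $\alpha_{1,2}$ and $\alpha_{1,3}$ to appear, and the expansion of $(2m-1)^3$ against the degree-quadratic factor generates a large number of lower-order terms in $m$ that must be combined without error. A useful consistency check is the leading behaviour: the only contribution free of degree factors is $4m\cdot(2m)\cdot(2m)^3=64m^5$, and tracking the next terms gives coefficient $16M_1^2-96$ for $m^4$, both of which match the stated expression.
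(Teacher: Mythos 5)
Your proposal is correct and follows essentially the same route as the paper: the paper's proof likewise writes $P_2(m^3M_1^2)(S(G))$ as the sum over ordered pairs $(v,uv)$ of $[2m-\deg_G(v)-1]^3\,[M_1^2(S(G))-\deg_G(v)^2-3\deg_G(v)-2\deg_G(u)]$ and then expands, converting the monomial sums into $M_1^k$, $\alpha_{1,k}$ and $M_2^1$ exactly as you describe. Your identification of $m(G_e)$ and $M_1^2(G_e)$ and your leading-coefficient checks ($64m^5$ and $(16M_1^2-96)m^4$) all agree with the paper's computation.
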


\begin{proof}
By definition of $S(G)$,
$P_2{(m^3M_1^2)}(S(G))$ $=$ $\sum_{v\in V}$ $\sum_{uv\in E}[2m-deg_G(v)-1]^3$ $[M_1^2$ $(S(G))-deg_G(v)^2-3deg_G(v)-2deg_G(u)]$.
Suppose
$X=16m^4M_1^2(S(G))+(-8M_1^3-24M_1^2-24M_1^2(S(G)))m^3+((-12M_1^2+12)M_1^2(S(G))+36M_1^2+12M_1^4)m^2
+((6M_1^3+12M_1^2-2)M_1^2(S(G))-42M_1^3-18M_1^2-6M_1^5-30M_1^4)m+ (-3M_1^3-3M_1^2-M_1^4)M_1^2(S(G))+10M_1^3+3M_1^2+12M_1^4+6M_1^5+M_1^6$. Then,  $P_2{(m^3M_1^2)}(S(G))$ $=$ $X$ $+$  {\small $\sum_{v\in V}\sum_{uv\in E}$ $[-16m^3deg_G(u)$ $+$ $2deg_G(u)$ $+$ $24m^2deg_G(u)deg_G(v)$ $+$ $48m^2deg_G(v)^2-12mdeg_G(u)deg_G(v)^2+2deg_G(u)deg_G(v)^3$} $+24mdeg_G(u)deg_G(v)+6deg_G(u)deg_G(v)^2-12mdeg_G(u)$
$+6deg_G(u)deg_G(v)]$. We now replace $\sum_{v\in V}\sum_{uv\in E}$ by $\sum_{uv\in E}$ to show that
$P_2{(m^3M_1^2)}(S(G))$ $=$ $X$ $-$ $16m^3M_1^2$
$+(48M_1^3+24M_1^2+48M_2^1)m^2+(-12M_1^2-48M_2^1-12\alpha_{1,2})m+2M_1^2+12M_2^1+6\alpha_{1,2}+2\alpha_{1,3}$. The proof now follows from Lemma \ref{5th2}(2).
\end{proof}

\begin{lemma}\label{6lmm5}
Let $G$ be a graph with $n$ vertices and $m$ edges. Then
$P_2{(m^2M_1^3)}(S(G))=64m^4+(8M_1^3-80)m^3+(-20M_1^3-48M_1^2-4M_1^4+32)m^2+ \big((-4M_1^2+50)M_1^3+40M_1^2-24M_2^1+4M_1^4
+4M_1^5+12\alpha_{1,2}-4)m+(M_1^3)^2+(2M_1^2-19)M_1^3-8M_1^2+12M_2^1-8M_1^4- 2M_1^5-M_1^6-6M_2^2-3\alpha_{1,2}$.
\end{lemma}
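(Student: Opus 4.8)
The plan is to compute $P_2(m^2M_1^3)(S(G))$ by the same substitution strategy used in the proof of Lemma~\ref{6lmm4}. By the definition of the operator $P_2(\,\cdot\,)$ applied to the subdivision graph, I would write
\[
P_2{(m^2M_1^3)}(S(G))=\sum_{v\in V}\sum_{uv\in E}\bigl[2m-deg_G(v)-1\bigr]^2\,\bigl[M_1^3(S(G))-deg_G(v)^3-3deg_G(v)^2-2deg_G(u)\bigr],
\]
where the bracketed factors record how the quantities $m(S(G)-e')$ and $M_1^3(S(G)-e')$ change when a subdivision-vertex $e'$ of $S(G)$ is deleted. The factor $[2m-deg_G(v)-1]^2$ reflects the edge count of $S(G)$ minus the degree contribution, while the second factor tracks the loss in the forgotten index, with the term $2deg_G(u)$ accounting for the neighbor-degree correction that is characteristic of $S(G)$'s structure.

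First I would split the product into a ``clean'' part $X$ that depends only on single-vertex degree sums, plus a ``cross'' part involving mixed factors $deg_G(u)deg_G(v)^j$. Collecting the terms that contain only $deg_G(v)$ and $m$, I would form $X$ as a polynomial in $m$ whose coefficients are the subdivision invariants $M_1^k(S(G))$ from Theorem~\ref{5th2}(2); these single-sum pieces evaluate directly via $\sum_{v\in V}\sum_{uv\in E}deg_G(v)^k=\sum_{v\in V}deg_G(v)^{k+1}=M_1^{k+1}$. The remaining cross terms, after replacing $\sum_{v\in V}\sum_{uv\in E}$ by $\sum_{uv\in E}$, produce exactly the edge-based invariants: $\sum_{uv\in E}deg_G(u)=M_1^2$, $\sum_{uv\in E}deg_G(u)deg_G(v)=M_2^1$, $\sum_{uv\in E}deg_G(u)deg_G(v)^2=\tfrac12\alpha_{1,2}$ (up to the symmetrization convention), $\sum_{uv\in E}(deg_G(u)deg_G(v))^j$ giving $M_2^j$, and so on.

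Next I would substitute the explicit subdivision formulas, most importantly $M_1^3(S(G))=M_1^3+8m$ together with the higher $M_1^k(S(G))$ values, back into $X$. The term $M_1^3(S(G))$ appearing quadratically (once through the $X$ part and implicitly through cross terms with $deg_G(v)^3$) is what generates the $(M_1^3)^2$ contribution and the $M_2^2$ term in the final answer, since $\sum_{uv\in E}(deg_G(u)deg_G(v))^2=M_2^2$ must appear from the product of two degree-cubed-type factors. I would then combine everything into a single polynomial in $m$ and collect coefficients degree by degree, verifying that the leading coefficient is $64$ (matching $16m^4\cdot M_1^3(S(G))=16m^4\cdot 8m=128m^5$ cancellations, since the true leading behavior is $m^2\cdot M_1^3$ scaled by subdivision).

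The main obstacle will be the bookkeeping in the cross terms: expanding $[2m-deg_G(v)-1]^2$ multiplied against both the $-deg_G(v)^3-3deg_G(v)^2$ piece and the neighbor term $-2deg_G(u)$ produces many monomials of the form $m^a\,deg_G(u)^b\,deg_G(v)^c$, and each must be correctly routed to a single-sum invariant $M_1^{k}$ or an edge invariant ($M_2^1$, $M_2^2$, $\alpha_{1,2}$). The subtle point is distinguishing which terms symmetrize into $\alpha_{1,2}$ versus $M_2^1$, and ensuring the $(M_1^3)^2$ and $M_2^2$ terms emerge with the correct signs and coefficients $+1$ and $-6$ respectively. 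I expect the cleanest route is to first verify the computation for the path factor $X$ in isolation, then handle the cross sum as a separate lemma-style calculation, finally adding the two and invoking Theorem~\ref{5th2}(2) to eliminate all $S(G)$-invariants in favor of the base-graph invariants of $G$.
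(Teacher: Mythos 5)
Your overall strategy --- expanding the double sum over incident (vertex, subdivision-vertex) pairs, separating the single-degree-sum part $X$ from the cross terms, and eliminating the $S(G)$-invariants via Theorem \ref{5th2}(2) --- is exactly the paper's route (its proof of Lemma \ref{6lmm4} is the template, and the proof of this lemma simply cites it). However, your starting identity is wrong, and no amount of downstream bookkeeping can recover from it. When the edge $\{v,e'\}$ of $S(G)$ is deleted together with its endpoints, the change in $M_1^3$ is not obtained by bumping the exponents in the $M_1^2$ bracket of Lemma \ref{6lmm4}: one loses $deg_G(v)^3$ from $v$ itself, $2^3=8$ from the subdivision vertex $e'$, $3deg_G(u)^2-3deg_G(u)+1$ from the neighbour $u$ whose degree drops by one, and $2^3-1^3=7$ from each of the $deg_G(v)-1$ other subdivision vertices adjacent to $v$. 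The correct second factor is therefore
\[
M_1^3(S(G))-deg_G(v)^3-3deg_G(u)^2+3deg_G(u)-7deg_G(v)-2,
\]
not $M_1^3(S(G))-deg_G(v)^3-3deg_G(v)^2-2deg_G(u)$ as you wrote. The discrepancy is already visible for $G=P_3$: a direct count gives $P_2{(m^2M_1^3)}(S(P_3))=40+2+2+40=84$, which the lemma's stated formula reproduces, whereas your bracket yields $72+4+4+72=152$. Once the bracket is corrected, the rest of your plan (routing each monomial $m^a\,deg_G(u)^b\,deg_G(v)^c$ to an $M_1^k$, $M_2^1$, $M_2^2$ or $\alpha_{1,2}$ and then substituting the $M_1^k(S(G))$ values) does go through; just note that with the ordered incidence sum one has $\sum_{v\in V}\sum_{uv\in E}deg_G(u)deg_G(v)^2=\alpha_{1,2}$ rather than $\tfrac12\alpha_{1,2}$, and the leading term arises as $2m\cdot 4m^2\cdot 8m=64m^4$, not from the $128m^5$ heuristic you describe.
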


\begin{proof}
Apply definition of $S(G)$ to show that
$P_2{(m^2M_1^3)}(S(G))$ $=$ $\sum_{v\in V}\sum_{uv\in E}[2m-deg_G(v)-1]^2[M_1^3(S(G))-deg_G(v)^3-3deg_G(u)^2
+3deg_G(u)-7deg_G(v)-2]$. Now  a similar argument as  Lemma \ref{6lmm4} completes the proof.
\end{proof}

\begin{lemma}\label{6lmm6}
Let $G$ be a graph with $n$ vertices and $m$ edges. Then
$P_2{(mM_1^4)}(S(G))=(4M_1^4+64m)m^2-32m^2+(12M_1^3-54M_1^2-10M_1^4-2M_1^5)m +(4-M_1^2)M_1^4+9M_1^3+19M_1^2+8M_2^1+M_1^5+M_1^6-6\alpha_{1,2}+4\alpha_{1,3}$.
\end{lemma}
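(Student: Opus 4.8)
The plan is to evaluate $P_2(mM_1^4)(S(G))$ straight from the definition $Hf(G)=\sum_{X\in\mathcal{S}_H(G)}f(G-X)$, exactly as in Lemmas \ref{6lmm4} and \ref{6lmm5}. Because $(mM_1^4)(H)=m(H)\,M_1^4(H)$ and every edge of $S(G)$ joins an original vertex $v\in V(G)$ to the subdivision vertex inserted on some edge $uv\in E(G)$, the sum over $\mathcal{S}_{P_2}(S(G))$ becomes the double sum $\sum_{v\in V}\sum_{uv\in E}$, where each summand corresponds to deleting $v$ together with the subdivision vertex of $uv$. Writing $X_{v,uv}$ for that deleted pair, I would first express the two factors $m(S(G)-X_{v,uv})$ and $M_1^4(S(G)-X_{v,uv})$ as polynomials in $deg_G(u)$ and $deg_G(v)$.

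For the edge count I would reuse the identity underlying the earlier lemmas: deleting $v$ removes its $deg_G(v)$ incident edges of $S(G)$, and deleting the subdivision vertex of $uv$ removes one more edge (its edge to $u$), so $m(S(G)-X_{v,uv})=2m-deg_G(v)-1$. For the $M_1^4$ factor I would track the degree changes: $u$ drops from $deg_G(u)$ to $deg_G(u)-1$, each of the remaining $deg_G(v)-1$ subdivision vertices incident to $v$ drops from $2$ to $1$, and the vertices $v$ and the subdivision vertex of $uv$ (of degrees $deg_G(v)$ and $2$) vanish. Assembling these contributions and checking that the stray constants cancel gives
\[
M_1^4(S(G)-X_{v,uv})=M_1^4(S(G))-deg_G(v)^4-4deg_G(u)^3+6deg_G(u)^2-4deg_G(u)-15deg_G(v).
\]

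With both factors available I would multiply them, expand into monomials in $deg_G(u)$ and $deg_G(v)$ (plus the constant $M_1^4(S(G))$ and powers of $m$), and collapse the double sum through the elementary identities $\sum_{v}\sum_{uv}deg_G(v)^{k}=\sum_{v}\sum_{uv}deg_G(u)^{k}=M_1^{k+1}$, $\sum_{v}\sum_{uv}deg_G(u)deg_G(v)=2M_2^1$, $\sum_{v}\sum_{uv}deg_G(u)^{a}deg_G(v)^{b}=\alpha_{a,b}$ for $a\neq b$, and $\sum_{v}\sum_{uv}1=2m$. The piece carrying the constant $M_1^4(S(G))$ reassembles as $M_1^4(S(G))\cdot P_2m(S(G))=M_1^4(S(G))\,(4m^2-M_1^2-2m)$ by Lemma \ref{6lmm1}(1). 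Substituting $M_1^4(S(G))=M_1^4+16m$ from Theorem \ref{5th2}(2) and collecting the coefficient of each invariant then yields the claimed formula.

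The summation identities and the closing algebra are routine; the real obstacle is pinning down the per-edge $M_1^4$ formula exactly. One must keep the cubic correction $-4deg_G(u)^3+6deg_G(u)^2-4deg_G(u)$ contributed by the single neighbour $u$ apart from the linear correction $-15deg_G(v)$ contributed by the $deg_G(v)-1$ subdivision vertices that fall to degree one, avoid double-counting the subdivision vertex of $uv$ (which is shared between the edges incident to $v$ and those incident to that subdivision vertex), and verify that the leftover constants $-2^{4}+1+15$ cancel. Since $S(G)$ is bipartite of girth at least $6$, the affected vertices never interact and no short-cycle corrections intrude, so, in contrast to Lemma \ref{5lm2}, this computation requires no girth hypothesis; all the care lies in this local degree accounting.
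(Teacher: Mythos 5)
Your proposal is correct and follows essentially the same route as the paper: you derive exactly the paper's key identity $P_2(mM_1^4)(S(G))=\sum_{v\in V}\sum_{uv\in E}[2m-deg_G(v)-1][M_1^4(S(G))-deg_G(v)^4-4deg_G(u)^3+6deg_G(u)^2-4deg_G(u)-15deg_G(v)]$, and your local degree accounting (including the cancellation $-16+1+15=0$) and the closing summation identities reproduce the stated formula. The only difference is that you spell out the details the paper compresses into ``a similar argument as Lemma \ref{6lmm4}.''
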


\begin{proof}
Apply definitions of $S(G)$ and $P_2{(mM_1^4)}$ to write the   form $P_2{(mM_1^4)}(S(G))$ $=$ $\sum_{v\in V}\sum_{uv\in E}[2m-deg_G(v)-1][M_1^4(S(G))-deg_G(v)^4
-4deg_G(u)^3 +6deg_G(u)^2-4deg_G(u)-15deg_G(v)]$. Now a similar argument as  Lemma \ref{6lmm4} gives the proof.
\end{proof}
\begin{lemma}\label{6lmm7}
Let $G$ be a graph with $n$ vertices and $m$ edges. Then
$P_2{\alpha_{1,2}}(S(G))=2\big(4M_1^2+2M_1^3\big)m-9M_1^3+M_1^2-2M_1^4-6M_2^1-\alpha_{1,2}+4m$.
\end{lemma}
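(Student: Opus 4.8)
The plan is to follow the same strategy as in the proofs of Lemmas~\ref{6lmm4}--\ref{6lmm6}: I write $P_2\alpha_{1,2}(S(G))$ as a double sum over the original vertices of $G$ and their incident edges, and then collapse every local contribution into the global degree-based invariants of $G$. The starting observation is that $S(G)$ is bipartite between the original vertices and the subdivision vertices, so each of its $2m$ edges joins exactly one original vertex $v\in V(G)$ to the subdivision vertex $w$ of a unique edge $uv\in E(G)$. Writing $S(G)-\{v,w\}$ for the subdivision graph with the vertices $v,w$ and their incident edges removed, this gives
\begin{equation*}
P_2\alpha_{1,2}(S(G))=\sum_{v\in V}\sum_{uv\in E}\alpha_{1,2}\big(S(G)-\{v,w\}\big).
\end{equation*}
Since the double sum has $\sum_{v\in V}deg_G(v)=2m$ terms and $\alpha_{1,2}(S(G))=4M_1^2+2M_1^3$ by Theorem~\ref{5th2}(2), the ``unperturbed'' part of the sum already contributes $2m\big(4M_1^2+2M_1^3\big)$, which is the leading term of the claimed identity.

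Next I would analyse precisely how deleting $v$ and $w$ alters the degrees in $S(G)$: the vertex $u$ loses the edge $uw$ and so drops from degree $deg_G(u)$ to $deg_G(u)-1$; each of the other $deg_G(v)-1$ subdivision vertices adjacent to $v$ drops from degree $2$ to degree $1$; all remaining degrees are unchanged. Hence $\alpha_{1,2}(S(G)-\{v,w\})=\alpha_{1,2}(S(G))-\delta(u,v)$, where the correction $\delta(u,v)$ accounts for three families of edges: (i) the $deg_G(v)+1$ deleted edges (the $deg_G(v)$ edges at $v$ together with $uw$), whose total $\alpha_{1,2}$-weight in $S(G)$ is $4deg_G(v)^2+2deg_G(v)^3+4deg_G(u)+2deg_G(u)^2$; (ii) the $deg_G(u)-1$ surviving edges at $u$, each changing by $4deg_G(u)+2$ because $u$'s degree dropped by one; and (iii) the $deg_G(v)-1$ surviving edges carried by the subdivision vertices of the other edges $vv'$ at $v$ (so $v'\sim v$, $v'\ne u$), each changing by $3deg_G(v')+deg_G(v')^2$ because the subdivision degree on that edge fell from $2$ to $1$. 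Verifying that these three families exhaust the affected edges without overlap---in particular that $v'\ne u$ forces the factor $deg_G(v)-1$ (not $deg_G(v)$) in family (iii) and that no surviving edge is double-counted---is the bookkeeping I expect to be the main obstacle, since the roles of $u$ and $v$ are genuinely asymmetric.

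It then remains to sum $\delta(u,v)$ and recognise the outcome. Families (i) and (ii) are ordinary one-vertex sums: using $\sum_{v}\sum_{u\sim v}deg_G(u)^k=\sum_v deg_G(v)^{k+1}=M_1^{k+1}$ and $\sum_v\sum_{uv}1=2m$, they contribute $10M_1^3+2M_1^4+2M_1^2-4m$. Family (iii) is the only genuinely two-step sum: for fixed $v$ the restriction $v'\ne u$ converts $\sum_{u\sim v}\sum_{v'\sim v,\,v'\ne u}$ into the factor $(deg_G(v)-1)\sum_{v'\sim v}$, after which the ordered-pair identities $\sum_{(v,v')}deg_G(v)deg_G(v')=2M_2^1$ and $\sum_{(v,v')}deg_G(v)deg_G(v')^2=\alpha_{1,2}$, together with $\sum_v\sum_{v'\sim v}deg_G(v')=M_1^2$ and $\sum_v\sum_{v'\sim v}deg_G(v')^2=M_1^3$, give $6M_2^1+\alpha_{1,2}-3M_1^2-M_1^3$. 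Adding the three families yields $\sum_{v}\sum_{uv}\delta(u,v)=9M_1^3+2M_1^4-M_1^2+6M_2^1+\alpha_{1,2}-4m$, and subtracting this from $2m\big(4M_1^2+2M_1^3\big)$ produces exactly the stated formula.
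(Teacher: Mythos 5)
Your proposal is correct and follows essentially the same route as the paper: both expand $P_2\alpha_{1,2}(S(G))$ as the double sum $\sum_{v\in V}\sum_{uv\in E}$ over edges of $S(G)$, extract the leading term $2m\,\alpha_{1,2}(S(G))=2m(4M_1^2+2M_1^3)$, and then track the degree corrections (your families (i)--(iii) correspond exactly to the paper's two correction sums, which total $10M_1^3+2M_1^4+2M_1^2-4m$ and $\alpha_{1,2}+6M_2^1-M_1^3-3M_1^2$). Your bookkeeping of which edges of $S(G)$ are affected, and the resulting totals, agree with the paper's computation.
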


\begin{proof}
By definitions of $S(G)$ and $\alpha_{1,2}$, we can write
{\small\begingroup\allowdisplaybreaks\begin{align*}
P_2{\alpha_{1,2}}(S(G))&=2m\alpha_{1,2}(S(G))-\sum_{v\in V}\Big[ \big(2deg_G(v)^2+4deg_G(v)\big)\big(deg_G(v)^2+deg_G(v)\big)\\
&+\big(2deg_G(v)^2+4deg_G(v)\big)\big(deg_G(v)^2-deg_G(v)\big)\\
&-\big(2(deg_G(v)-1)^2+4(deg_G(v)-1)\big)\big(deg_G(v)^2-deg_G(v)\big)\Big]\\
&-\sum_{v\in V}\sum_{uv\in E} \Big(\big(2deg_G(v)^2+4deg_G(v)\big)-\big(deg_G(v)^2+deg_G(v)\big)\Big)\big(deg_G(u)-1\big).
\end{align*}\endgroup}

Now by simple calculations,  we obtain
\begingroup\allowdisplaybreaks\begin{align*}
P_2{\alpha_{1,2}}(S(G))&=2m\alpha_{1,2}(S(G))-10M_1^3-2M_1^2-2M_1^4+4m\\
&-\sum_{uv\in E}\Big(deg_G(u)deg_G(v)^2+deg_G(u)^2deg_G(v)+6deg_G(u)deg_G(v)\\
&-deg_G(u)^2-deg_G(v)^2-3deg_G(u)-3deg_G(v)\Big)\\
&=2m\alpha_{1,2}(S(G))-10M_1^3-2M_1^2-2M_1^4+4m-\alpha_{1,2}\\
&-6M_2^1+M_1^3+3M_1^2,
\end{align*}\endgroup
and Lemma \ref{5th2}(2) gives the result.
\end{proof}


\begin{lemma}\label{6lmm8}
Let $G$ be a graph with $n$ vertices and $m$ edges. Then
$P_2{(m(M_1^2)^2)}(S(G))=64m^4+(32M_1^2-32)m^3+(4(M_1^2)^2-16M_1^3-112M_1^2)m^2 +(-30(M_1^2)^2+(-4M_1^3+40)M_1^2+58M_1^3
+80M_2^1+20M_1^4+2M_1^5+8\alpha_{1,2})m-(M_1^2)^3+10(M_1^2)^2+(8M_1^3+8M_2^1+ 2M_1^4)M_1^2-13M_1^3-24M_2^1-15M_1^4-7M_1^5-M_1^6-20\alpha_{1,2}-4\alpha_{1,3}$.
\end{lemma}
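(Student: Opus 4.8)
The plan is to mirror the edge-deletion computation carried out in Lemmas \ref{6lmm4}--\ref{6lmm7}. Every edge of $S(G)$ joins an original vertex $v\in V(G)$ to the subdivision vertex $w$ of some edge $uv\in E(G)$, so the defining sum for $P_2$ applied to $S(G)$ becomes a double sum $\sum_{v\in V}\sum_{uv\in E}$ ranging over incident (vertex, edge) pairs. Deleting $\{v,w\}$ together with its two endpoints erases the $deg_G(v)$ edges at $v$ and the one remaining edge at $w$, so $m\bigl(S(G)-\{v,w\}\bigr)=2m-deg_G(v)-1$; tracking the degree drops of the surviving neighbours (the other $deg_G(v)-1$ subdivision vertices at $v$ fall from $2$ to $1$, while $u$ falls from $deg_G(u)$ to $deg_G(u)-1$) gives, exactly as recorded in Lemma \ref{6lmm4}, $M_1^2\bigl(S(G)-\{v,w\}\bigr)=M_1^2(S(G))-deg_G(v)^2-3deg_G(v)-2deg_G(u)$. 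Writing $d_v=deg_G(v)$ and $d_u=deg_G(u)$, this yields the master identity
\[
P_2{(m(M_1^2)^2)}(S(G))=\sum_{v\in V}\sum_{uv\in E}\bigl(2m-d_v-1\bigr)\bigl(M_1^2(S(G))-d_v^2-3d_v-2d_u\bigr)^2 .
\]

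Next I would expand the integrand. Squaring the second factor produces monomials $M_1^2(S(G))^2$, $M_1^2(S(G))\,d_v^2$, $M_1^2(S(G))\,d_v$, $M_1^2(S(G))\,d_u$, the pure powers $d_v^4,d_v^3,d_v^2$, the mixed term $d_v^2d_u$, and $d_vd_u$, $d_u^2$; multiplying by the linear factor $2m-d_v-1$ then raises the $d_v$-degree by at most one. A convenient organizing remark is that the coefficient of $M_1^2(S(G))^2$ after summation is precisely $\sum_{v}\sum_{uv}(2m-d_v-1)=P_2{m}(S(G))=4m^2-2m-M_1^2$ from Lemma \ref{6lmm1}(1), which already accounts for the leading $64m^4$ once $M_1^2(S(G))=M_1^2+4m$ is inserted.

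The third step is to collapse each resulting double sum via the dart-counting identities $\sum_{v}\sum_{uv}d_v^{\,k}=\sum_{v}\sum_{uv}d_u^{\,k}=M_1^{k+1}$, $\sum_{v}\sum_{uv}1=2m$, $\sum_{v}\sum_{uv}d_vd_u=2M_2^1$, $\sum_{v}\sum_{uv}d_v^2d_u=\sum_{v}\sum_{uv}d_vd_u^2=\alpha_{1,2}$, and $\sum_{v}\sum_{uv}d_v^3d_u=\alpha_{1,3}$. It is worth noting in advance that no $M_2^2$ can survive: the single $d_u^2$ contribution in the squared bracket carries no $d_v^2$ companion, and the linear factor raises the $d_v$-degree by at most one, so the term $d_v^2d_u^2$ never arises -- consistent with the absence of $M_2^2$ from the target formula.

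Finally I would substitute $M_1^2(S(G))=M_1^2+4m$ from Theorem \ref{5th2}(2), expand $M_1^2(S(G))^2=(M_1^2)^2+8mM_1^2+16m^2$, and regroup by powers of $m$. The main obstacle is purely the bookkeeping: roughly two dozen monomials appear, and the hazard is twofold -- routing each mixed $d_v^ad_u^b$ term to the correct invariant ($\alpha_{1,2}$, $\alpha_{1,3}$ or $M_2^1$), and not overlooking cross terms (such as a $-20m(M_1^2)^2$ coming from $M_1^2\cdot(-20mM_1^2)$ in the part linear in $M_1^2(S(G))$) that feed the coefficient of $m$. As internal checks I would confirm the $64m^4$ leading term and match the $m^2$-coefficient $4(M_1^2)^2-16M_1^3-112M_1^2$ against the analogous structure of Lemma \ref{6lmm4}.
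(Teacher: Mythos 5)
Your proposal is correct and follows essentially the same route as the paper: the paper's proof consists of exactly the master identity $P_2{(m(M_1^2)^2)}(S(G))=\sum_{v\in V}\sum_{uv\in E}[2m-deg_G(v)-1][M_1^2(S(G))-deg_G(v)^2-3deg_G(v)-2deg_G(u)]^2$ followed by ``a similar argument as Lemma \ref{6lmm4}'', i.e.\ the same expansion and dart-sum bookkeeping you describe. Your verification of the edge count $2m-deg_G(v)-1$, the degree-drop formula for $M_1^2$, and the observation that no $M_2^2$ term can arise are all consistent with the stated result.
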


\begin{proof}
By definition of {\small$S(G)$, $P_2{(m(M_1^2)^2)}(S(G))$ $=$ $\sum_{v\in V}\sum_{uv\in E}[2m-deg_G(v)-1][M_1^2(S(G))$} $-deg_G(v)^2-3deg_G(v)-2deg_G(u)]^2$.
 Now the proof follows from a similar argument as Lemma \ref{6lmm4}.
\end{proof}

\begin{lemma}\label{6lmm9}
Let $G$ be a graph with $n$ vertices and $m$ edges. Then
$P_2{(M_1^2M_1^3)}(S(G))=64m^3+(8M_1^3+16M_1^2-16)m^2+((2M_1^3-60)M_1^2-20M_1^3-4M_1^4)m-4(M_1^2)^2+(-8M_1^3
-M_1^4+10)M_1^2-(M_1^3)^2+17M_1^3+2\alpha_{1,3}+10M_2^1+13M_1^4+3M_1^5+M_1^6+6M_2^2+6\alpha_{1,2}$.
\end{lemma}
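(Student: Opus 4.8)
The plan is to reduce $P_2{(M_1^2M_1^3)}(S(G))$ to a sum over the edges of $S(G)$ and then re-express everything through the subdivision-graph invariants already tabulated in Theorem~\ref{5th2}(2). By the definition of the operator, $P_2{f}(S(G))=\sum_{e'\in E(S(G))}f\big(S(G)-e'\big)$, where $S(G)-e'$ deletes both endpoints of $e'$ together with all edges incident to them. Every edge of $S(G)$ joins an original vertex $v$ (of degree $deg_G(v)$ in $S(G)$) to a subdivision vertex $w_{uv}$ (of degree $2$) for some $uv\in E(G)$; consequently the family of these $2m$ edges is indexed exactly by the double sum $\sum_{v\in V}\sum_{uv\in E}$ already used in Lemmas~\ref{6lmm4} and~\ref{6lmm5}.

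First I would record the effect of deleting $\{v,w_{uv}\}$ on the degree sequence. Removing $v$ lowers each of its remaining $deg_G(v)-1$ subdivision neighbours from degree $2$ to degree $1$, and removing $w_{uv}$ lowers the other endpoint $u$ from $deg_G(u)$ to $deg_G(u)-1$; no other vertex changes, since in $S(G)$ original vertices are pairwise non-adjacent and subdivision vertices all have degree exactly $2$. Bookkeeping these changes yields the closed forms
\begin{align*}
M_1^2\big(S(G)-\{v,w_{uv}\}\big)&=M_1^2(S(G))-deg_G(v)^2-3deg_G(v)-2deg_G(u),\\
M_1^3\big(S(G)-\{v,w_{uv}\}\big)&=M_1^3(S(G))-deg_G(v)^3-7deg_G(v)-3deg_G(u)^2+3deg_G(u)-2,
\end{align*}
which are precisely the bracketed factors appearing in Lemmas~\ref{6lmm4} and~\ref{6lmm5}.

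Next I would multiply these two expressions and sum over the directed edges $(v,u)$. The expansion produces monomials of the shape $deg_G(v)^a\,deg_G(u)^b$ (weighted by constants built from $M_1^2(S(G))$ and $M_1^3(S(G))$), and each summand converts to a standard invariant: a pure $v$-power gives $\sum_{v}\sum_{u\sim v}deg_G(v)^a=M_1^{a+1}$, a mixed term gives $\sum_{uv\in E}\big[deg_G(v)^a deg_G(u)^b+deg_G(v)^b deg_G(u)^a\big]=\alpha_{a,b}$ (which equals $2M_2^a$ when $a=b$), and each constant contributes $2m$ times itself. Substituting $M_1^2(S(G))=M_1^2+4m$ and $M_1^3(S(G))=M_1^3+8m$ from Theorem~\ref{5th2}(2) and collecting by powers of $m$ then delivers the stated formula.

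The step demanding the most care is organizational rather than conceptual: the product generates on the order of two dozen monomials, and one must convert each to its correct invariant ($M_1^{a+1}$, $\alpha_{a,b}$, $M_2^a$, or a multiple of $m$) and then amalgamate the constant-order terms so that the coefficients of $(M_1^2)^2$, $M_1^2M_1^3$, $M_1^2M_1^4$, $(M_1^3)^2$, $M_2^1$, $M_2^2$, $\alpha_{1,2}$, and $\alpha_{1,3}$ emerge exactly as claimed. No structural input beyond the degree-change identities above and Theorem~\ref{5th2}(2) is required.
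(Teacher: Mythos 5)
Your proposal is correct and takes essentially the same route as the paper: the paper's proof likewise writes $P_2{(M_1^2M_1^3)}(S(G))$ as $\sum_{v\in V}\sum_{uv\in E}$ of the product of exactly the two bracketed factors you derive (your degree-change bookkeeping reproduces them verbatim), and then finishes by the same expansion and monomial-by-monomial conversion to $M_1^{a+1}$, $\alpha_{a,b}$, $M_2^a$ and multiples of $m$ that is invoked via the reference to Lemma~\ref{6lmm4}. Your substitution of $M_1^2(S(G))=M_1^2+4m$ and $M_1^3(S(G))=M_1^3+8m$ from Theorem~\ref{5th2}(2) is also the paper's final step.
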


\begin{proof}
By definition of $S(G)$, $P_2{(M_1^2M_1^3)}(S(G))$ $=$ $\sum_{v\in V}\sum_{uv\in E}[M_1^2(S(G))-deg_G(v)^2-3deg_G(v)-2deg_G(u)][M_1^3(S(G))$ $-deg_G(v)^3-3deg_G(u)^2+3deg_G(u)-7deg_G(v)-2]$. Now the proof can be completed in a similar way as Lemma \ref{6lmm4}.
\end{proof}

Define nine graph invariants as

\hspace{2cm}$\Theta_1(G)$ $=$ $\sum_{uvw\in  \mathcal{S}_{P_3}(G)}$ $[deg_G(u)deg_G(v)deg_G(w)],$\

\hspace{2cm}$\Theta_2(G)$ $=$ $\sum_{uvw\in \mathcal{S}_{P_3}(G)}$ $[deg_G(u)deg_G(w)],$\

\hspace{2cm}$\Theta_3(G)$ $=$ $\sum_{uvwx\in  \mathcal{S}_{P_4}(G)}$ $[deg_G(u)deg_G(x)],$\

\hspace{2cm}$\Theta_4(G)$ $=$ $\sum_{uvw\in  \mathcal{S}_{P_3}(G)}$ $[deg_G(u)^2deg_G(w)$ $+$ $deg_G(u)deg_G(w)^2],$\

\hspace{2cm}$\Theta_5(G)$ $=$ $\sum_{uvw\in  \mathcal{S}_{P_3}(G)}$ $deg_G(v)^2[deg_G(u)$ $+$ $deg_G(w)]$,\

\hspace{2cm}$\Theta_6(G)$ $=$ $\sum_{uvwx\in  \mathcal{S}_{P_4}(G)}$ $[deg_G(u)deg_G(v)$ $+$ $deg_G(w)deg_G(x)],$\

\hspace{2cm}$\Theta_2^+(G)$ $=$ $\sum_{uvw\in  \mathcal{S}_{P_3}(G)}$ $[deg_G(u)$ $+$  $deg_G(w)],$\

\hspace{2cm}$\Theta_3^+(G)$ $=$ $\sum_{uvwx\in  \mathcal{S}_{P_4}(G)}$ $[deg_G(u)$ $+$ $deg_G(x)]$,\

\hspace{1.8cm}$\Theta_3^{+,2}(G)$ $=$ $\sum_{uvwx\in  \mathcal{S}_{P_4}(G)}$ $[deg_G(u)^2$ $+$ $deg_G(x)^2]$.

\begin{lemma}\label{6lmm10}
Let $G$ be a graph with $n$ vertices and $m$ edges. Then $P_2{(M_1^2)^2}(G)$ $=$ $(m+6)(M_1^2)^2+(-2M_1^3-4m-8M_2^1-12)M_1^2+M_1^3$ $+$ $M_1^5+2M_2^2-6\alpha_{1,2}+4\alpha_{1,3}+4m+26M_2^1-2M_1^4$ $+8\Theta_1-24\Theta_2+8\Theta_3+4\Theta_4$.
\end{lemma}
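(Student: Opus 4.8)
The plan is to unfold the operator $P_2$ straight from its definition. A subgraph of $G$ isomorphic to $P_2$ is just an edge, so
$$P_2{(M_1^2)^2}(G)=\sum_{uv\in E(G)}\big[M_1^2(G-\{u,v\})\big]^2,$$
where $G-\{u,v\}$ deletes the two endpoints together with every edge meeting them. The first task is to express $M_1^2(G-\{u,v\})$ through invariants of $G$. For $w\notin\{u,v\}$ the degree drops by the number $a_w\in\{0,1,2\}$ of its neighbours in $\{u,v\}$, so $deg_{G-\{u,v\}}(w)=deg_G(w)-a_w$. Expanding $\sum_w(deg_G(w)-a_w)^2$ and writing $S_x=\sum_{y\sim x}deg_G(y)$, I would sort the vertices $w$ by whether they are adjacent to $u$, to $v$, or to both, to obtain a per-edge identity
$$M_1^2(G-\{u,v\})=M_1^2-B_{uv},\qquad B_{uv}=deg_G(u)^2+deg_G(v)^2+2(S_u+S_v)-3\big(deg_G(u)+deg_G(v)\big)+2-2t_{uv},$$
where $t_{uv}$ counts the common neighbours of $u$ and $v$.

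Writing $A=M_1^2$, the square expands as $\sum_{uv\in E}(A-B_{uv})^2=mA^2-2A\sum_{uv}B_{uv}+\sum_{uv}B_{uv}^2$. The linear sum is routine: the handshake identities $\sum_{uv\in E}[deg_G(u)^2+deg_G(v)^2]=M_1^3$, $\sum_{uv\in E}[deg_G(u)+deg_G(v)]=M_1^2$, and $\sum_{uv\in E}(S_u+S_v)=\sum_{x\in V}deg_G(x)S_x=2M_2^1$ (together with $\sum_{uv}t_{uv}=3t(G)$) collapse $\sum_{uv}B_{uv}$ into a combination of $m,M_1^2,M_1^3,M_2^1$. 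The genuinely delicate part is $\sum_{uv}B_{uv}^2$, which I would treat term by term.

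Expanding $B_{uv}^2$, the pure-degree products $deg_G(u)^2,deg_G(v)^2$ and the constant, summed over edges by handshake arguments, give contributions in $M_1^3,M_1^4,M_1^5,M_2^1,M_2^2,\alpha_{1,2},\alpha_{1,3}$. The new invariants come from the neighbour-degree sums. From $[2(S_u+S_v)]^2$ one gets $4\sum_{uv}(S_u^2+S_v^2)=4\sum_{v}deg_G(v)S_v^2$ and $8\sum_{uv}S_uS_v$. Splitting $S_v^2$ into its diagonal $\sum_{w\sim v}deg_G(w)^2$ and its off-diagonal $P_3$ part yields $\sum_v deg_G(v)S_v^2=2\Theta_1+\alpha_{1,2}$, producing the $8\Theta_1$ term; meanwhile $\sum_{uv}S_uS_v=\sum_{uv}\sum_{w\sim u}\sum_{w'\sim v}deg_G(w)deg_G(w')$ ranges over walks $w\,{-}\,u\,{-}\,v\,{-}\,w'$, whose non-degenerate instances are exactly $P_4$-subgraphs and contribute $\Theta_3$ (each counted once via its middle edge, giving $8\Theta_3$), the coincidences $w=v$, $w'=u$, $w=w'$ folding into degree invariants. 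Finally the cross terms $-12(S_u+S_v)(deg_G(u)+deg_G(v))$ and $4(deg_G(u)^2+deg_G(v)^2)(S_u+S_v)$ supply, through their off-diagonal $P_3$ parts, the invariants $-24\Theta_2$ (each $P_3$ counted twice, once per edge) and $+4\Theta_4$, while their diagonal parts and the $\Theta_5$-type configurations reduce via $\sum_v deg_G(v)^3S_v=\alpha_{1,3}$, $\sum_v deg_G(v)^2S_v=\alpha_{1,2}$ to the listed degree-based invariants. Assembling all pieces and simplifying should reproduce the stated formula.

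The main obstacle is precisely the bookkeeping inside $\sum_{uv}B_{uv}^2$: one must expand each of $S_u^2$, $S_v^2$ and $S_uS_v$ over ordered pairs of neighbours and isolate, with correct multiplicities, the genuine path-shaped configurations (which become $\Theta_1,\Theta_2,\Theta_3,\Theta_4$) from every degenerate overlap among $u,v,w,w'$ (repeated endpoints, shared neighbours, and triangles). Keeping these coincidence corrections consistent—so that the reducible families such as $\Theta_5$ and $\Theta_2^+$ collapse into $\alpha_{1,2},\alpha_{1,3},M_2^1$ and the short-cycle terms either cancel or merge into degree invariants—is the only step demanding real care rather than mechanical handshake summation, and it is where sign and coefficient errors are most likely to arise.
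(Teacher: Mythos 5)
Your proposal follows essentially the same route as the paper's own proof: expand $M_1^2(G-\{u,v\})$ edge-by-edge as $M_1^2$ minus a correction $B_{uv}$ built from the endpoint degrees and the neighbour-degree sums $S_u,S_v$, then square, sum over edges, and identify the off-diagonal pieces with $\Theta_1,\Theta_2,\Theta_3,\Theta_4$ (your multiplicities $8,-24,8,4$ are the correct ones). Your version is in fact slightly more careful than the paper's, which omits the common-neighbour correction $-2t_{uv}$ and thus tacitly assumes $G$ is triangle-free — harmless for the paper's applications to forests and subdivision graphs, but worth noting since the lemma is stated for arbitrary $G$.
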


\begin{proof}
By definition of $M_1^2$ and some tedious calculations, one can see that
\begingroup\allowdisplaybreaks\begin{align*}
P_2{((M_1^2)^2)}(G)& = \sum_{uv\in E}\Big[M_1^2-deg_G(u)^2-deg_G(v)^2-\sum_{xu\in E(G-uv)}[2deg_G(x)-1]\\ &-\sum_{yv\in E(G-uv)}[2deg_G(y)-1]\Big]^2\\
&=(m+2)(M_1^2)^2+(-2M_1^3-4m-4)M_1^2+5M_1^3-2M_1^4\\
&+M_1^5+2M_2^1+2M_2^2-2\alpha_{1,2}+4m\\
&+\sum_{uv\in E}\Big[4deg_G(u)^2\sum_{xu\in E(G-uv)}deg_G(x)+4deg_G(u)^2\sum_{yv\in E(G-uv)}deg_G(y)\\
&+4deg_G(v)^2\sum_{xu\in E(G-uv)}deg_G(x)+4deg_G(v)^2\sum_{yv\in E(G-uv)}deg_G(y)\\
&-(4M_1^2-8)\sum_{xu\in E(G-uv)}deg_G(x)-(4M_1^2-8)\sum_{yv\in E(G-uv)}deg_G(y)\\
&-4deg_G(u)\sum_{xu\in E(G-uv)}deg_G(x)-4deg_G(u)\sum_{yv\in E(G-uv)}deg_G(y)\\
&-4deg_G(v)\sum_{xu\in E(G-uv)}deg_G(x)-4deg_G(v)\sum_{yv\in E(G-uv)}deg_G(y)\\
&+\Big(\sum_{xu\in E(G-uv)}2deg_G(x)\Big)^2+\Big(\sum_{yv\in E(G-uv)}2deg_G(y)\Big)^2\\
&+2\Big(\sum_{xu\in E(G-uv)}2deg_G(x)\Big)\Big(\sum_{vy\in E(G-uv)}2deg_G(y)\Big)\Big].
\end{align*}\endgroup

Therefore, $P_2{(M_1^2)^2}(G)$ $=$ $(m+6)(M_1^2)^2+(-2M_1^3-4m-8M_2^1-12)M_1^2+M_1^3$
$+$ $M_1^5+2M_2^2-6\alpha_{1,2}+4\alpha_{1,3}+4m+26M_2^1-2M_1^4$
$+$ $8\Theta_1-24\Theta_2+8\Theta_3+4\Theta_4$, proving the lemma.
\end{proof}


\begin{lemma}\label{6lmm11}
Let $G$ be a graph with $n$ vertices and $m$ edges. Then
$P_2{(mM_1^3)}(G)=(M_1^3+2)m^2+(4M_1^3-3\alpha_{1,2}-4M_1^2+6M_2^1-M_1^4+2)m+(-M_1^2+4)M_1^3-9\alpha_{1,2}+\alpha_{1,3}-6M_1^2+14M_2^1-6\Theta_2+3\Theta_4-M_1^4+M_1^5+6M_2^2$.
\end{lemma}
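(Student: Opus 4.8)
The plan is to read $P_2(mM_1^3)(G)$ as the edge sum $\sum_{uv\in E}m(G-uv)\,M_1^3(G-uv)$, where $G-uv$ deletes both endpoints of the edge together with all incident edges. Since deleting $u$ and $v$ removes exactly $deg_G(u)+deg_G(v)-1$ edges, one has $m(G-uv)=(m+1)-(deg_G(u)+deg_G(v))$, and I would split
$$P_2(mM_1^3)(G)=(m+1)\,P_2{M_1^3}(G)-\sum_{uv\in E}\big(deg_G(u)+deg_G(v)\big)M_1^3(G-uv).$$
The first summand is supplied verbatim by Lemma \ref{5lm2}(2), so the whole problem reduces to evaluating the degree-weighted residual $\Delta(G):=\sum_{uv\in E}(deg_G(u)+deg_G(v))M_1^3(G-uv)$.

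To compute $\Delta$, I would expand $M_1^3(G-uv)$ explicitly. Writing $a_w\in\{0,1,2\}$ for the number of the two possible edges $wu,wv$ that are present, a vertex $w\notin\{u,v\}$ has degree $deg_G(w)-a_w$ in $G-uv$, so $M_1^3(G-uv)=\sum_{w\neq u,v}(deg_G(w)-a_w)^3$. Expanding the cube separates this into the ``bulk'' term $M_1^3-deg_G(u)^3-deg_G(v)^3$ plus the neighbour sums $\sum_w deg_G(w)^2 a_w$ and $\sum_w deg_G(w)\,a_w$; the squares and cubes of $a_w$ also generate the common-neighbour count of $u$ and $v$, but these terms vanish under the triangle-free (indeed forest) hypothesis in which the statement is applied, and which is also what legitimizes invoking Lemma \ref{5lm2}. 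Multiplying through by $(deg_G(u)+deg_G(v))$ and summing over edges leaves a finite list of edge sums to be identified.

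The engine for those identifications is that any edge sum whose inner index runs over the neighbours of one fixed endpoint collapses, after passing to ordered endpoint pairs, to $\sum_{u}(deg_G(u)-1)\sum_{w\sim u}Q(deg_G(w))$: for $Q(x)=x$ this is $2M_2^1-M_1^2$ and for $Q(x)=x^2$ it is $\alpha_{1,2}-M_1^3$. Attaching the \emph{near}-endpoint weight $deg_G(u)$ promotes these to $\sum_u deg_G(u)(deg_G(u)-1)\sum_{w\sim u}Q(deg_G(w))$, producing $M_2^2$ and $M_2^1$ together with $\alpha_{1,2}$ corrections. The genuinely new feature is that the \emph{far}-endpoint weight $deg_G(v)$ paired with a neighbour $w$ of the near endpoint $u$ makes $(w,u,v)$ a copy of $P_3$ centred at $u$; summing such products over all edges and neighbours reproduces exactly $\Theta_2$ (from the linear weight) and $\Theta_4$ (from the quadratic weight). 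Finally $M_1^5$ and $\alpha_{1,3}$ come from the diagonal product $(deg_G(u)+deg_G(v))(deg_G(u)^3+deg_G(v)^3)$, while $M_1^2M_1^3$ comes from $M_1^3\sum_{uv}(deg_G(u)+deg_G(v))=M_1^2M_1^3$. Collecting the $(m+1)P_2{M_1^3}$ block (contributing the $m^2$ and $m$ coefficients and much of the constant part) with the pieces of $\Delta$ should assemble precisely into the stated formula.

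I expect the only real obstacle to be bookkeeping rather than ideas. The decisive trap is normalization: passing from $\sum_{uv\in E}$ to $\sum_u\sum_{v\sim u}$ already sums both orientations of each edge, so a naive extra ``symmetry doubling'' inserts a spurious factor of $2$ that would, for instance, wrongly double the coefficients of $\Theta_2$ and $\Theta_4$; one must therefore keep each unordered edge counted exactly once and carefully distinguish the edge sums that are symmetric in $u,v$ from those carrying the $P_3$ structure. Before trusting the final collection of terms I would verify the vanishing of the common-neighbour contributions and recheck the normalization on a small instance such as $G=P_3$, where the identity $\sum_{uv\in E}\sum_{w\neq u,v}deg_G(w)^2a_w=\alpha_{1,2}-M_1^3$ can be confirmed by hand.
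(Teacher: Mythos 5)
Your proposal is correct and follows essentially the same route as the paper: both expand $m(G-uv)=m+1-deg_G(u)-deg_G(v)$ and $M_1^3(G-uv)=M_1^3-deg_G(u)^3-deg_G(v)^3-\sum_{x\sim u,x\ne v}(3deg_G(x)^2-3deg_G(x)+1)-\sum_{y\sim v,y\ne u}(\cdots)$ and then identify the resulting edge sums with $M_2^2$, $\alpha_{1,2}$, $\Theta_2$ and $\Theta_4$ exactly as you describe (your factoring through $(m+1)P_2M_1^3-\Delta$ is only a cosmetic reorganization of the same expansion). Your explicit flagging of the triangle-free/common-neighbour issue is a point the paper leaves implicit, and your normalization checks are consistent with the stated coefficients $-6\Theta_2+3\Theta_4$.
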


\begin{proof}
Choose $uv \in E$ and set $A(uv)$ $=$ $\sum_{xu\in E(G-uv)}[3deg_G(x)^2-3deg_G(x)+1]$ and $B(uv)$ $=$ $\sum_{yv\in E(G-uv)}[3deg_G(y)^2-3deg_G(y)+1]$. Apply definition of $P_2{(mM_1^3)}$ to prove that $P_2{(mM_1^3)}(G)$ $=$ $\sum_{uv\in E}[m-deg_G(u)-deg_G(v)+1][M_1^3-deg_G(u)^3-deg_G(v)^3-A(uv)-B(uv)]$. Therefore
\begingroup\allowdisplaybreaks\begin{align*}
P_2{(mM_1^3)}(G)&=m^2M_1^3-M_1^2M_1^3+mM_1^3-mM_1^2-mM_1^4+2m^2+M_1^3-3M_1^2-M_1^4\\
&+M_1^5+2M_2^1+\alpha_{1,3}+2m
-3\sum_{uv\in E}\Big[\Big( m-deg_G(u)-deg_G(v)+1   \Big)\Big(\\
&\sum_{xu\in E(G-uv)}[deg_G(x)^2-deg_G(x)]+ \sum_{yv\in E(G-uv)}[deg_G(y)^2-deg_G(y)]\Big)  \Big].
\end{align*}\endgroup
Now, a similar argument as  Lemma \ref{6lmm10} gives our result.
\end{proof}


\begin{lemma}\label{6lmm12}
Let $G$ be a graph with $n$ vertices and $m$ edges. Then
$P_2{(m^2M_1^2)}(G)=(M_1^2-2)m^3+(-M_1^3+5M_1^2-4M_2^1-4)m^2+(-2(M_1^2)^2-4M_1^3+11M_1^2+2M_1^4-20M_2^1+6\alpha_{1,2}+8\Theta_2-2)m-2(M_1^2)^2
+(M_1^3+2M_2^1+7)M_1^2-5M_1^3+11\alpha_{1,2}-4\alpha_{1,3}-2\Theta_4-$ $20M_2^1-8\Theta_1+8\Theta_2
+3M_1^4-M_1^5-2M_2^2$.
\end{lemma}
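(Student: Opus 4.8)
The plan is to unfold the definition and push everything down to sums over the edges of $G$. Since $m^2M_1^2=m^2\cdot M_1^2$, the definition of $P_2{f}$ gives
\[
P_2{(m^2M_1^2)}(G)=\sum_{uv\in E}m(G-uv)^2\,M_1^2(G-uv),
\]
where $G-uv$ is the deletion of both end vertices together with all edges meeting them. First I would record the two local closed forms that drive the whole computation. Deleting $u$ and $v$ removes exactly $deg_G(u)+deg_G(v)-1$ edges, so $m(G-uv)=m-deg_G(u)-deg_G(v)+1$. For the Zagreb term, every surviving neighbour of $u$ (resp.\ $v$) distinct from $v$ (resp.\ $u$) drops by exactly one in degree — here I use that $uv\in E$ forces any common neighbour of $u,v$ to close a triangle, so in the triangle-free setting relevant to forests no vertex loses two — which, exactly as in Lemma \ref{6lmm10}, yields
\[
M_1^2(G-uv)=M_1^2-deg_G(u)^2-deg_G(v)^2-\sum_{xu\in E(G-uv)}\big[2deg_G(x)-1\big]-\sum_{yv\in E(G-uv)}\big[2deg_G(y)-1\big].
\]

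Next I would substitute both forms, expand the square $[m-deg_G(u)-deg_G(v)+1]^2$ against $M_1^2(G-uv)$, and split the resulting edge sum into two families. The first family collects the terms carrying no neighbour sum: these are powers of $m$ times $deg_G(u)^adeg_G(v)^b$, and after summing over $E$ they collapse into $m$, the $M_1^k$, the $M_2^\lambda$ and the $\alpha_{\lambda,\xi}$ via the standard identities such as $\sum_{uv\in E}[deg_G(u)^a+deg_G(v)^a]=M_1^{a+1}$ and $\sum_{uv\in E}[deg_G(u)^adeg_G(v)^b+deg_G(u)^bdeg_G(v)^a]=\alpha_{a,b}$. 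This part is entirely parallel to the reductions already performed in Lemmas \ref{6lmm10} and \ref{6lmm11}, and it supplies the $m^3,m^2$ contributions together with the $m$-free edge invariants.

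The genuinely new work is the second family, the double sums coming from $-[m-deg_G(u)-deg_G(v)+1]^2\big(\sum_{xu}[2deg_G(x)-1]+\sum_{yv}[2deg_G(y)-1]\big)$. Viewing each such term through an ordered pair $(u,v)$ with $uv\in E$ and $x$ a neighbour of $u$ distinct from $v$, every configuration is a path $x\,u\,v\in\mathcal{S}_{P_3}(G)$, counted once through each of its two arms. The decisive observation is a dichotomy: a $P_3$-sum survives as a genuine invariant only when \emph{both} endpoints $x$ and $v$ carry a degree weight; if instead the outer coefficient weights the middle vertex $u$ and leaves the partner free, the inner sum $\sum_{x}deg_G(x)$ collapses into edge invariants ($M_2^1$, $\alpha_{1,2}$, $M_1^k$), and likewise the $-1$ in $[2deg_G(x)-1]$ only counts the free endpoint and so never produces a $\Theta$. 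Pairing the three $v$-weighted pieces of $[m+1-deg_G(u)-deg_G(v)]^2$, namely $-2(m+1)deg_G(v)$, $2deg_G(u)deg_G(v)$ and $deg_G(v)^2$, against $2\sum_{x}deg_G(x)$ produces the three basic $P_3$-sums $2\Theta_2$, $2\Theta_1$ and $\Theta_4$ respectively; assembling these with their outer coefficients gives the surviving contribution $8(m+1)\Theta_2-8\Theta_1-2\Theta_4$, which accounts for the $8\Theta_2m$, $-8\Theta_1$, $+8\Theta_2$ and $-2\Theta_4$ terms in the claim.

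Finally I would add the two families, import the explicit $P_2$-reductions of Lemmas \ref{6lmm10}--\ref{6lmm11} for the repeated mixed sums, and simplify to the stated polynomial. The main obstacle is not conceptual but the sheer volume and precision of the bookkeeping: correctly separating the collapsing (middle-weighted) double sums from the surviving (endpoint-weighted) ones, getting the multiplicity-two counting of each $P_3$ right, and confirming that triangle-freeness is the only structural input so that no triangle correction to $M_1^2(G-uv)$ leaks in. Once the $\Theta_1,\Theta_2,\Theta_4$ coefficients are pinned down as above, the remaining assembly of $m$-polynomial and edge-invariant terms is routine algebra.
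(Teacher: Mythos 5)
Your proposal is correct and follows essentially the same route as the paper: expand the definition using $m(G-uv)=m-deg_G(u)-deg_G(v)+1$ and the triangle-free formula for $M_1^2(G-uv)$, absorb the neighbour-free terms into the standard edge invariants, and reduce the remaining double sums $\sum_{uv\in E}[\,\cdot\,]^2\big(\sum_{xu}deg_G(x)+\sum_{yv}deg_G(y)\big)$ by counting each $P_3$ through its two arms, exactly as the paper does by isolating that sum and invoking the argument of Lemma \ref{6lmm10}. Your assembled contribution $8(m+1)\Theta_2-8\Theta_1-2\Theta_4$ matches the $\Theta$-terms of the stated formula, so the bookkeeping is sound.
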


\begin{proof}
Apply definition of $P_2{(m^2M_1^2)}$ and some tedious calculations to show that
\begingroup\allowdisplaybreaks\begin{align*}
&P_2{(m^2M_1^2)}(G)=(M_1^2-2)m^3+(-M_1^3+3M_1^2-4)m^2+(-2(M_1^2)^2-4M_1^3\\
&+7M_1^2+2M_1^4
-4M_2^1+2\alpha_{1,2}-2)m-2(M_1^2)^2+(M_1^3+2M_2^1+5)M_1^2\\
&-5M_1^3-8M_2^1+3M_1^4
-M_1^5-2M_2^2+5\alpha_{1,2}-2\alpha_{1,3}\\
&-2\sum_{uv\in E}\Big[m-deg_G(u)-deg_G(v)+1  \Big]^2\Big[\sum_{xu\in E(G-uv)}deg_G(x)+\sum_{yv\in E(G-uv)}deg_G(y)\Big].
\end{align*}\endgroup
Now a similar argument as  Lemma \ref{6lmm10},  gives the proof.
\end{proof}

\begin{lemma}\label{6lmm13}
Let $G$ be a graph with $n$ vertices and $m$ edges. Then
$EM_2(G)=\alpha_{1,2}-6M_2^1+\frac{1}{2}M_1^4-\frac{5}{2}M_1^3+6M_1^2-4m+\Theta_2$.
\end{lemma}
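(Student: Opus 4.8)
The plan is to rewrite $EM_2$ as a sum over the paths of length two in $G$. By definition $EM_2(G) = \sum_{e \cap f \neq \emptyset} deg_G(e)\,deg_G(f)$, the sum running over unordered pairs of distinct adjacent edges. Since $G$ is simple, two adjacent edges meet in exactly one vertex, so each such pair $\{e,f\}$ is precisely a path of length two: writing the shared vertex as the center $v$ and the two remaining endpoints as $u$ and $w$, we get $e = uv$, $f = vw$, and hence a bijection between adjacent edge pairs and the members of $\mathcal{S}_{P_3}(G)$. Using $deg_G(e) = deg_G(u) + deg_G(v) - 2$, I would set $a = deg_G(u)$, $b = deg_G(v)$, $c = deg_G(w)$ and expand
\begin{equation*}
deg_G(e)\,deg_G(f) = (a+b-2)(b+c-2) = ac + b(a+c) + b^2 - 2(a+c) - 4b + 4 .
\end{equation*}
Summing over all $uvw \in \mathcal{S}_{P_3}(G)$ then reduces the problem to evaluating six separate degree sums.

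Next I would identify each of the six resulting sums with a known invariant. The term $\sum ac$ is $\Theta_2(G)$ by definition. For the rest it is convenient to organize each $P_3$ by its center $v$, so that one is choosing an unordered pair of distinct neighbours of $v$; this turns the sums into expressions involving $\sum_v \binom{deg_G(v)}{2}$ weighted by degree data and the neighbour-degree sum $S(v) = \sum_{uv \in E} deg_G(u)$. The purely central terms are immediate: $\sum b^2 = \tfrac12(M_1^4 - M_1^3)$, $\sum b = \tfrac12(M_1^3 - M_1^2)$, and $\sum 4 = 4\sum_v \binom{deg_G(v)}{2} = 2M_1^2 - 4m$, using $\sum_v deg_G(v) = 2m$.

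The main work, and the only place needing real care, is the mixed terms $\sum b(a+c)$ and $\sum (a+c)$. Here I would use the standard edge-sum identities
\begin{gather*}
\sum_{uv\in E}\bigl[deg_G(u)+deg_G(v)\bigr] = M_1^2, \qquad \sum_{uv\in E} deg_G(u)\,deg_G(v) = M_2^1, \\
\sum_{uv\in E}\bigl[deg_G(u)\,deg_G(v)^2 + deg_G(u)^2\,deg_G(v)\bigr] = \alpha_{1,2},
\end{gather*}
together with the observation that $\sum_v deg_G(v)^k\,S(v) = \sum_{uv\in E}\bigl[deg_G(v)^k deg_G(u) + deg_G(u)^k deg_G(v)\bigr]$ for $k = 0,1,2$. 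Since each endpoint of a center $v$ appears in $deg_G(v)-1$ of the paths through $v$, this yields $\sum(a+c) = 2M_2^1 - M_1^2$ and $\sum b(a+c) = \alpha_{1,2} - 2M_2^1$. The potential pitfall is the double counting of each edge when passing from a sum over centers to a sum over edges (each edge $xy$ contributes once as $v=x,u=y$ and once as $v=y,u=x$), which must be tracked to pin down the coefficients of $M_2^1$ and $\alpha_{1,2}$ correctly. Substituting the six evaluations back into the expansion and collecting like terms then produces exactly $\alpha_{1,2} - 6M_2^1 + \tfrac12 M_1^4 - \tfrac52 M_1^3 + 6M_1^2 - 4m + \Theta_2$, completing the proof.
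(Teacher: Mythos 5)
Your proposal is correct and follows essentially the same route as the paper: both rewrite $EM_2(G)$ as $\sum_{uvw\in\mathcal{S}_{P_3}(G)}[deg_G(u)+deg_G(v)-2][deg_G(v)+deg_G(w)-2]$ and expand; the paper merely states the result of the expansion, while you supply the intermediate degree-sum identities, all of which check out.
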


\begin{proof}
By definition of $EM_2$, $EM_2(G)$ $=$ $\sum_{uvw\in \mathcal{S}_{P_3}(G)}[deg_G(u)+deg_G(v)-2][deg_G(v)+deg_G(w)-2]$
$=$ $\alpha_{1,2}$ $-$ $6M_2^1$ $+$ $\frac{1}{2}M_1^4$ $-$ $\frac{5}{2}M_1^3$ $+$ $6M_1^2$ $-$ $4m$ $+$ $\Theta_2$.
\end{proof}


\begin{lemma}\label{6lmm14}
Let $G$ be a graph with $n$ vertices and $m$ edges. Then
$P_2{\Theta_2}(G)=(m+2)\Theta_2-\Theta_1-\Theta_4-2\Theta_3+\Theta_3^+$.
\end{lemma}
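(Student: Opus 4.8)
The plan is to unfold the operator $P_2(\cdot)$ from its definition and then interchange the order of summation. By definition $P_2{\Theta_2}(G)=\sum_{uv\in E(G)}\Theta_2(G-u-v)$, where $G-u-v$ is the induced subgraph on $V(G)\setminus\{u,v\}$ and, crucially, the degrees appearing in $\Theta_2(G-u-v)$ are the degrees computed \emph{inside} $G-u-v$. Writing $\Theta_2(G-u-v)=\sum deg_{G-u-v}(a)\,deg_{G-u-v}(c)$ over the $P_3$-subgraphs $abc$ of $G-u-v$, I would first record the two facts that drive the argument: a copy of $P_3$ on vertices $a,b,c$ survives in $G-u-v$ exactly when $\{a,b,c\}\cap\{u,v\}=\emptyset$, and for such a copy $deg_{G-u-v}(a)=deg_G(a)-|N_G(a)\cap\{u,v\}|$ (and likewise for $c$).

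Interchanging the two summations then gives
\[
P_2{\Theta_2}(G)=\sum_{abc\in\mathcal{S}_{P_3}(G)}\ \sum_{\substack{uv\in E(G)\\ \{u,v\}\cap\{a,b,c\}=\emptyset}}\big(deg_G(a)-|N_G(a)\cap\{u,v\}|\big)\big(deg_G(c)-|N_G(c)\cap\{u,v\}|\big),
\]
and I would expand the product into a main term $deg_G(a)deg_G(c)$, two linear correction terms, and one quadratic correction term, treating each separately. For the main term the inner sum is $deg_G(a)deg_G(c)$ times the number of edges disjoint from $\{a,b,c\}$, which equals $m-deg_G(a)-deg_G(b)-deg_G(c)+2$ (the degree incidences minus the two path edges $ab,bc$). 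Summing over $\mathcal{S}_{P_3}(G)$ and recognising $\sum deg_G(a)deg_G(c)(deg_G(a)+deg_G(c))=\Theta_4$ and $\sum deg_G(a)deg_G(b)deg_G(c)=\Theta_1$, this yields precisely $(m+2)\Theta_2-\Theta_4-\Theta_1$.

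For the linear terms, $\sum_{uv}|N_G(c)\cap\{u,v\}|$ counts edge--endpoint incidences at the neighbours of $c$: each neighbour $z\neq b$ of $c$ contributes its $deg_G(z)-1$ edges that avoid the path, and the pair $(abc,z)$ is exactly a $P_4$-subgraph $abcz$. Reassembling the two symmetric extensions, at $c$ and at $a$, converts the total linear contribution into $2\Theta_3-\Theta_3^+$, which enters with a minus sign and produces $-2\Theta_3+\Theta_3^+$. The quadratic term $\sum_{uv}|N_G(a)\cap\{u,v\}|\,|N_G(c)\cap\{u,v\}|$ is where the main difficulty lies: a nonzero summand forces an edge $uv$ with one end adjacent to $a$ and the other to $c$, i.e.\ a short cycle threading the path $abc$; the same phenomenon (a common neighbour of path vertices) would also correct the main and linear terms through triangles and $4$-cycles. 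All such configurations are absent in the high-girth regime in which the lemma is applied (the subdivision graph $S(G)$ has girth $\ge 6$), so the quadratic term vanishes and the short-cycle corrections disappear.

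Collecting the three surviving contributions then gives $(m+2)\Theta_2-\Theta_1-\Theta_4-2\Theta_3+\Theta_3^+$, as claimed. The delicate point throughout is the bookkeeping of the degree drops $|N_G(\cdot)\cap\{u,v\}|$: one must verify that they recombine exactly into the $P_4$-indexed invariants $\Theta_3$ and $\Theta_3^+$ and that no stray $P_3$-indexed remainder (such as $\Theta_2^+$) is left over, which amounts to the auxiliary identity $2\Theta_2-\Theta_2^+=\Theta_3^+$ valid in this setting. Verifying this clean cancellation, together with the vanishing of every short-cycle term, is the heart of the proof; the remaining manipulations are routine reindexing of sums.
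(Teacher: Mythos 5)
Your proof is correct and follows essentially the same route as the paper: expand $P_2{\Theta_2}(G)=\sum_{uv\in E}\Theta_2(G-u-v)$, interchange the order of summation over edges and surviving $P_3$-subgraphs, and reassemble the main and linear correction terms into $(m+2)\Theta_2-\Theta_1-\Theta_4$ and $-2\Theta_3+\Theta_3^{+}$ respectively --- the paper's displayed intermediate step is exactly this computation in compressed form. The one place you are actually more careful than the paper is the quadratic correction term $\sum_{uv}|N_G(a)\cap\{u,v\}|\,|N_G(c)\cap\{u,v\}|$: it does not vanish in general, so the identity as stated fails for graphs of small girth (for $C_5$ the left side is $5$ while the right side is $0$, the difference being precisely that quadratic term), whereas the paper asserts the lemma for arbitrary $G$ and silently drops these contributions; since the lemma is only ever invoked for $S(G)$, which has girth at least $6$, the high-girth restriction you make explicit is exactly the regime in which the result is both true and used.
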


\begin{proof}
By definition of $P_2{\Theta_2}$,
\begingroup\allowdisplaybreaks\begin{align*}
P_2{\Theta_2}(G)&=m\Theta_2-\sum_{u v w\in\mathcal{S}_{P_3}(G)}deg_G(u)deg_G(w)\big( deg_G(u)+deg_G(v)+deg_G( w)-2 \big)\\
&-\sum_{x yab\in \mathcal{S}_{P_4}(G)}\big( 2deg_G(x)deg_G(b)-deg_G(x)-deg_G( b) \big)\\
&=(m+2)\Theta_2-\Theta_1-\Theta_4-2\Theta_3+\Theta_3^+,
\end{align*}\endgroup
as desired.
\end{proof}

By a similar arguments as  Lemma \ref{6lmm10}, we can prove the following two lemmas:

\begin{lemma}\label{6lmm15}
Let $G$ be a graph with $n$ vertices and $m$ edges. Then
$P_2{(mM_2^1)}(G)=m^2M_2^1+(2M_2^1-2\Theta_2-\alpha_{1,2}+\Theta_2^+)m-(M_1^2-1)M_2^1
+\Theta_2^++2\Theta_1-4\Theta_2+2\Theta_3+\Theta_4+\Theta_5$.
\end{lemma}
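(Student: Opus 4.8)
The plan is to imitate the vertex-deletion expansion used in the proof of Lemma~\ref{6lmm10}. By the definitions of $P_2{f}$ and of $G-X$, we have
\[
P_2{(mM_2^1)}(G)=\sum_{uv\in E}m\big(G-\{u,v\}\big)\,M_2^1\big(G-\{u,v\}\big),
\]
where $G-\{u,v\}$ is obtained by deleting the two vertices $u,v$ together with every edge incident to them. The first factor is immediate: removing $u$ and $v$ deletes exactly $deg_G(u)+deg_G(v)-1$ edges, so $m(G-\{u,v\})=m-deg_G(u)-deg_G(v)+1$. The substance of the proof lies in expanding the second factor as a polynomial in the degrees of $G$.

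For the second factor, I would observe that the edges surviving in $G-\{u,v\}$ are exactly the edges $xy\in E$ with $x,y\notin\{u,v\}$, and that each such endpoint satisfies $deg_{G-\{u,v\}}(w)=deg_G(w)-a_w$, where $a_w$ denotes the number of neighbours of $w$ lying in $\{u,v\}$. Expanding the product $(deg_G(x)-a_x)(deg_G(y)-a_y)$ and summing over the surviving edges yields
\[
M_2^1(G-\{u,v\})=M_2^1-\Sigma_{\mathrm{del}}-\Sigma_{\mathrm{corr}},
\]
where $\Sigma_{\mathrm{del}}$ is the $deg_G\cdot deg_G$ contribution of the edges meeting $\{u,v\}$ and $\Sigma_{\mathrm{corr}}$ collects the degree-drop terms $a_x\,deg_G(y)+a_y\,deg_G(x)-a_xa_y$ on the surviving edges. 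The linear drop terms are indexed by paths of length two having $u$ or $v$ as an endpoint, while the quadratic terms $a_xa_y$ are indexed by length-three configurations joining $u$ and $v$ to a surviving edge.

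Next I would multiply the two factors and sum over all $uv\in E$. The dominant piece is
\[
\sum_{uv\in E}\big(m-deg_G(u)-deg_G(v)+1\big)M_2^1=M_2^1\big(m^2-M_1^2+m\big),
\]
using $\sum_{uv\in E}\big(deg_G(u)+deg_G(v)\big)=M_1^2$ and $\sum_{uv\in E}deg_G(u)deg_G(v)=M_2^1$; this already supplies $m^2M_2^1$, the term $-M_1^2M_2^1$, and part of the coefficient of $m$. Every remaining product is then a double sum over the deleted edge $uv$ and a nearby edge or path, and each is identified with one of the standard invariants $M_1^2,M_1^3,M_2^1,M_2^2,\alpha_{1,2},\alpha_{1,3}$ or with one of the path invariants $\Theta_1,\Theta_2,\Theta_4,\Theta_5,\Theta_2^+$ (from the $P_3$-indexed sums) and $\Theta_3$ (from the $P_4$-indexed sums). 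Collecting like terms should reproduce the stated closed form.

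I expect the main obstacle to be precisely the cross terms in which the factor $-deg_G(u)-deg_G(v)$ coming from $m(G-\{u,v\})$ multiplies a degree-drop correction of $M_2^1(G-\{u,v\})$. For instance, a surviving edge $xy$ with $x$ adjacent to $v$ together with the deleted edge $uv$ forms the path $u\text{-}v\text{-}x\text{-}y$, and the product of the factor $deg_G(u)$ with the drop term $deg_G(y)$ contributes $deg_G(u)deg_G(y)$, exactly a summand of $\Theta_3$. Re-indexing all such path-weighted sums correctly as sums over $P_3$ and $P_4$ subgraphs, and carefully avoiding double-counting when a path revisits $u$ or $v$ (the short-cycle configurations absorbed by the lower-order $M_2^k$ and $\alpha$ terms), constitutes the delicate part of the argument. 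Once this is done, the routine collection of terms gives the result.
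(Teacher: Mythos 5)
Your proposal follows exactly the route the paper intends: its ``proof'' of this lemma is literally the remark that it goes by the same argument as Lemma~\ref{6lmm10}, namely the edge-deletion expansion $P_2(mM_2^1)(G)=\sum_{uv\in E}\big(m-deg_G(u)-deg_G(v)+1\big)M_2^1(G-uv)$ with $M_2^1(G-uv)$ rewritten via the degree drops $a_w$, followed by re-indexing the resulting double sums as the invariants $M_1^k$, $M_2^k$, $\alpha_{\lambda,\xi}$ and $\Theta_i$. Your identification of where each term arises (e.g.\ $\Theta_3$ from the cross term of $-deg_G(u)$ with a degree-drop on a surviving edge adjacent to a neighbour of $v$) is correct, so this is essentially the paper's argument spelled out in more detail.
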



\begin{lemma}\label{6lmm16}
Let $G$ be a graph with $n$ vertices and $m$ edges. Then
$P_2{\alpha_{1,2}}(G)=(m+1)\alpha_{1,2}-2M_2^2-\alpha_{1,3}+\Theta_3^+-\Theta_3^{+,2}-2\Theta_6$.
\end{lemma}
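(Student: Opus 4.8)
The plan is to expand $P_2{\alpha_{1,2}}(G)=\sum_{ab\in E}\alpha_{1,2}(G-ab)$ and interchange the order of summation, tracking how deleting the two endpoints $a,b$ of an edge changes the degrees occurring in $\alpha_{1,2}$. Fix a deleted edge $ab$. An edge $xy$ of $G$ survives in $G-ab$ exactly when $x,y\notin\{a,b\}$, and then $deg_{G-ab}(x)=deg_G(x)-p$ and $deg_{G-ab}(y)=deg_G(y)-q$, where $p$ (respectively $q$) counts the neighbours of $x$ (respectively $y$) lying in $\{a,b\}$. Writing $d_x=deg_G(x)$, substituting into $\alpha_{1,2}(G-ab)=\sum_{xy}(d_x-p)(d_y-q)\big(d_x-p+d_y-q\big)$ and interchanging the two sums gives
\[
P_2{\alpha_{1,2}}(G)=\sum_{xy\in E}\ \sum_{\substack{ab\in E\\ a,b\notin\{x,y\}}}(d_x-p)(d_y-q)(d_x+d_y-p-q).
\]

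Next I would split each inner summand as $d_xd_y(d_x+d_y)+\delta$, where $\delta=(d_x-p)(d_y-q)(d_x+d_y-p-q)-d_xd_y(d_x+d_y)$ vanishes whenever $p=q=0$, i.e. whenever $ab$ has no endpoint adjacent to $x$ or to $y$. For a fixed edge $xy$ there are exactly $m-d_x-d_y+1$ edges $ab$ not incident with $x$ or $y$, so summing the base term $d_xd_y(d_x+d_y)$ over all of them yields $\sum_{xy}(m-d_x-d_y+1)\,d_xd_y(d_x+d_y)$. Using $\sum_{xy}d_xd_y(d_x+d_y)=\alpha_{1,2}$ together with $\sum_{xy}d_xd_y(d_x+d_y)^2=\alpha_{1,3}+2M_2^2$, this main term equals $(m+1)\alpha_{1,2}-\alpha_{1,3}-2M_2^2$, which already supplies the first three summands of the asserted formula.

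It then remains to evaluate the correction $\sum_{xy}\sum_{ab}\delta$, in which only the edges $ab$ having an endpoint adjacent to $x$ or to $y$ contribute. Here I would expand $\delta$ as a polynomial in $p$ and $q$ and sum each monomial over the admissible edges $ab$. Every admissible $ab$ has an endpoint adjacent to $x$ or to $y$, so adjoining it to the frame $xy$ produces a configuration on $\{x,y,a,b\}$ that is a $P_3$ or a $P_4$ passing through the edge $xy$; re-expressing the monomial sums through the degree-weighted counts of these paths and the invariants of the previous lemmas, the $P_3$-type quantities cancel and the remaining $P_4$-type sums collapse to exactly $\Theta_3^+-\Theta_3^{+,2}-2\Theta_6$, in the same spirit as Lemma \ref{6lmm10}.

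The hard part will be this last step. The polynomial $\delta$ contains several monomials $p^iq^j$, and a single deleted edge $ab$ may be incident with a neighbour of $x$ and with a neighbour of $y$ at the same time, so the admissible configurations and their overlaps must be sorted with care. Arranging the accounting so that every $P_3$-based contribution cancels and only the three $P_4$-invariants $\Theta_3^+$, $\Theta_3^{+,2}$ and $\Theta_6$ survive is exactly the delicate bookkeeping compressed into the phrase ``by a similar argument as Lemma \ref{6lmm10}.''
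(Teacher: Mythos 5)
Your setup and your main term are correct, and this is the same direct-expansion route the paper intends (its own ``proof'' of this lemma is only the remark that the argument is similar to Lemma \ref{6lmm10}). The one place where your plan goes wrong is the last step: you expect the $P_3$-type and overlap contributions to cancel after careful bookkeeping, but they do not cancel --- they are simply \emph{absent} for the graphs to which the identity applies, and when they are present the identity is false. A pair $(xy,ab)$ with $p+q\geq 2$ forces a triangle or a $4$-cycle through $xy$ (e.g.\ $a\sim x$ and $b\sim y$ gives a $C_4$; $a,b\sim x$ or $a\sim x,y$ gives a triangle). For $G=C_4$ one has $P_2\alpha_{1,2}(G)=4\,\alpha_{1,2}(K_2)=8$, while the right-hand side of the lemma equals $5\cdot 64-128-128+16-32-64=-16$, so no accounting can rescue the statement for general $G$. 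The lemma tacitly carries the same girth $\geq 5$ hypothesis as Lemma \ref{5lm2} and Theorem \ref{the1}; this is harmless in the paper because it is only applied to $S(G)$, which has girth $\geq 6$, but your proof must invoke it explicitly.

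Once girth $\geq 5$ is assumed, the ``hard part'' collapses: every contributing pair has $(p,q)\in\{(1,0),(0,1)\}$, there are no overlapping configurations, and the four vertices $x,y,a,b$ always span a $P_4$ having $xy$ and $ab$ as its two end edges. For $(p,q)=(1,0)$,
\[
\delta=(d_x-1)\,d_y\,(d_x+d_y-1)-d_xd_y(d_x+d_y)=-2d_xd_y-d_y^{2}+d_y .
\]
Each $P_4$ subgraph $uvwz$ contributes exactly twice (delete $uv$ and keep $wz$, or delete $wz$ and keep $uv$), giving $-2(d_ud_v+d_wd_z)-(d_u^{2}+d_z^{2})+(d_u+d_z)$ per path, and summing over $\mathcal{S}_{P_4}(G)$ yields precisely $\Theta_3^{+}-\Theta_3^{+,2}-2\Theta_6$. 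So your decomposition and the target terms are right; the missing ingredient is the girth restriction, not a more delicate cancellation.
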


For the sake of completeness, we mention here two results which are useful in our next calculations.
\begin{lemma}\label{6lmm17}
\cite{1}
Let $G$ be a graph. Then
$\beta(G)=\alpha+M_1^4-3M_1^3+2M_1^2-2M_2^1$.
\end{lemma}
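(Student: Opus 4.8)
The plan is to compute $\beta(G)$ directly from its definition by regrouping the sum over adjacent edge pairs according to their common vertex. Since $G$ is simple, any two adjacent edges $e \sim f$ meet in exactly one vertex $v = e \cap f$, so the defining sum can be organized as a sum over $v \in V(G)$ of the contributions of all unordered pairs of edges incident to $v$. For a fixed $v$ with $d = deg_G(v)$, each of the $d$ incident edges appears in exactly $d-1$ of these pairs, so the inner sum of $deg_G(e) + deg_G(f)$ over such pairs equals $(d-1)\sum_{e \ni v} deg_G(e)$. This yields the identity $\beta(G) = \sum_{v \in V} deg_G(v)\big(deg_G(v)-1\big)\sum_{e \ni v} deg_G(e)$.

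Next I would evaluate the innermost sum using $deg_G(e) = deg_G(u) + deg_G(v) - 2$ for $e = uv$. Writing $S(v) = \sum_{u \sim v} deg_G(u)$ for the sum of neighbor-degrees, one gets $\sum_{e \ni v} deg_G(e) = S(v) + deg_G(v)^2 - 2deg_G(v)$, since each of the $deg_G(v)$ incident edges contributes the fixed terms $deg_G(v)-2$ together with the degree of its other endpoint. Substituting back splits $\beta(G)$ into a neighbor-degree part $\sum_v deg_G(v)(deg_G(v)-1)S(v)$ and a purely degree-based part $\sum_v deg_G(v)(deg_G(v)-1)\big(deg_G(v)^2 - 2deg_G(v)\big)$.

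The degree-based part is immediate: expanding $deg_G(v)(deg_G(v)-1)(deg_G(v)^2-2deg_G(v)) = deg_G(v)^2(deg_G(v)-1)(deg_G(v)-2) = deg_G(v)^4 - 3deg_G(v)^3 + 2deg_G(v)^2$ and summing over $v$ produces $M_1^4 - 3M_1^3 + 2M_1^2$. For the neighbor-degree part I would use the standard reorganization of a vertex sum weighted by neighbor degrees into an edge sum: for any vertex function $f$, $\sum_v f(v) S(v) = \sum_{uv \in E}\big[f(u)deg_G(v) + f(v)deg_G(u)\big]$. Taking $f(v) = deg_G(v)^2 - deg_G(v)$ turns this into $\sum_{uv \in E}\big[deg_G(u)^2 deg_G(v) + deg_G(v)^2 deg_G(u)\big] - 2\sum_{uv \in E} deg_G(u)deg_G(v)$, which is exactly $\alpha_{1,2}(G) - 2M_2^1(G)$ (here $\alpha$ abbreviates $\alpha_{1,2}$). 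Adding the two parts gives $\beta(G) = \alpha + M_1^4 - 3M_1^3 + 2M_1^2 - 2M_2^1$, as claimed.

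I expect the only delicate point to be the pair-counting step: one must verify that regrouping over the common vertex is exactly right, with no double counting and the factor $d-1$ correct, using that a simple graph has no multi-edges so each adjacent pair has a unique meeting vertex. Everything after that is routine algebra together with the single reindexing identity for neighbor sums.
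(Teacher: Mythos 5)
Your proof is correct. Note that the paper itself gives no argument for this lemma --- it is simply quoted from reference \cite{1} --- so there is no internal proof to compare against; your derivation (partitioning the adjacent edge pairs by their unique common vertex, which is legitimate precisely because $G$ is simple, evaluating $\sum_{e\ni v}deg_G(e)=S(v)+deg_G(v)^2-2deg_G(v)$, and converting $\sum_v f(v)S(v)$ into an edge sum to produce $\alpha_{1,2}-2M_2^1$) is the natural self-contained route and checks out, e.g.\ on $K_{1,3}$ both sides equal $36$. Your reading of the bare symbol $\alpha$ in the statement as $\alpha_{1,2}$ is the right one, as the computation confirms.
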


\begin{theorem}\label{dasth1}
\cite{4}
Let $G$ be a graph with $m$ edges and girth $\geq 5$. Then
 $m_5(G)=\frac{1}{5!}\Big[m(m^4+10m^3+43m^2+54m-328)+30(M_1^2)^2-12\alpha_{1,2}(m-7)
 -20\alpha_{1,3}
-2M_1^2(2m^3+30m^2+61m-225)+12\beta+2M_2^1(6m^2+66m-239)
+M_1^3(6m^2+24m-149)+2M_1^4(m+10)+6M_2^2-EM_2-5M_1^5
+3P_2(M_1^2)^2+8P_2(mM_1^3)
-6P_2(m^2M_1^2)-P_2EM_2
+P_2(mM_2^1)\Big]$.
\end{theorem}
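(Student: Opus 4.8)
The plan is to obtain $m_5$ from $m_4$ through a single application of the recurrence \eqref{5eq1}, having first produced a closed form for $m_4(G)$ by iterating that same recurrence upward from the trivial value $m_1(G)=m$. Concretely, I would record $m_2=\tfrac12P_2m$, then $m_3=\tfrac13P_2m_2$ and $m_4=\tfrac14P_2m_3$, expanding every $P_2$ that appears by means of Lemma~\ref{5ml1} (for the powers $P_2m,\,P_2m^2,\,P_2m^3$) and Lemma~\ref{5lm2} (for $P_2M_1^2,\,P_2M_1^3,\,P_2M_2^1$ and $P_2(mM_1^2)$). The hypothesis $\mathrm{girth}\ge 5$ enters here and essentially only here: it is exactly what licenses the clean forms of Lemma~\ref{5lm2}, in which deleting the two endpoints of an edge alters the relevant degree sums without the corrections that triangles or $4$-cycles would force.

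After collecting terms, $m_4$ acquires the shape
\[
m_4(G)=\frac{1}{4!}\Big[P(m,M_1^2,M_1^3,M_1^4,M_2^1)+3(M_1^2)^2+8mM_1^3-6m^2M_1^2+24\,mM_2^1-24\,EM_2\Big],
\]
where $P$ is an explicit polynomial in the listed invariants. Two seemingly extra quantities, $\beta$ and $EM_1$, are removed before this stage: $\beta$ is eliminated through Lemma~\ref{6lmm17}, and $EM_1=\sum_e\deg_G(e)^2$ is rewritten as $M_1^3+2M_2^1-4M_1^2+4m$ straight from its definition. The purpose of these reductions is that the five products $(M_1^2)^2$, $mM_1^3$, $m^2M_1^2$, $mM_2^1$ and $EM_2$ are precisely the quantities whose $P_2$-transform is \emph{not} among the expansions furnished by Lemmas~\ref{5ml1} and~\ref{5lm2}; everything else reduces to degree-based invariants.

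Now I apply $m_5=\tfrac15P_2m_4$ and use linearity of $P_2$, so that $5!\,m_5=P_2\big(4!\,m_4\big)=\sum_i c_iP_2(T_i)$, the sum running over the monomials $T_i$ of $4!\,m_4$. For each $T_i$ that is a power of $m$, a single $M_1^k$ or $M_2^1$, or the product $mM_1^2$, I substitute the matching expansion from Lemmas~\ref{5ml1}–\ref{5lm2}; these account for the fully expanded polynomial part of the asserted formula. For the five surviving monomials $m^2M_1^2$, $mM_1^3$, $mM_2^1$, $(M_1^2)^2$ and $EM_2$ no such expansion exists, so their images are left symbolically as $P_2(m^2M_1^2)$, $P_2(mM_1^3)$, $P_2(mM_2^1)$, $P_2(M_1^2)^2$ and $P_2EM_2$, carrying the coefficients inherited from $m_4$. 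Gathering like terms then produces the stated expression, and this is exactly the set of five residual $P_2$-terms that appear in it.

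I expect the only genuine difficulty to be organizational rather than conceptual: the argument threads roughly a dozen degree-based invariants through three successive $P_2$-expansions, and the chief source of error is the arithmetic of collecting coefficients (in particular, making sure the reductions of $\beta$ and $EM_1$ and the contributions of $P_2(mM_1^2)$ and $P_2M_2^1$ combine correctly). A convenient safeguard is to evaluate both sides on the paths $P_n$, for which $m_k(P_n)=\binom{n-k}{k}$ is known in closed form; matching a few of these values pins down every coefficient and catches transcription slips.
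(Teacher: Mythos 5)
Your overall strategy is the right one, and it is almost certainly the derivation behind the cited result: the paper itself offers no proof of Theorem~\ref{dasth1} (it is imported by citation), and the only machinery available is exactly what you describe, namely iterating the recurrence \eqref{5eq1} from $m_1=m$ up to $m_5$ and expanding each $P_2$-image via Lemmas~\ref{5ml1} and~\ref{5lm2}, eliminating $\beta$ and $EM_1$ through Lemma~\ref{6lmm17} and the identity $EM_1=M_1^3+2M_2^1-4M_1^2+4m$. Your intermediate expression for $4!\,m_4$, including the terms $3(M_1^2)^2+8mM_1^3-6m^2M_1^2+24\,mM_2^1-24\,EM_2$, is correct; it reproduces $m_4(P_{12})=70$.

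The gap is in your final claim that gathering like terms ``produces the stated expression,'' and it is visible from your own bookkeeping. Since $5!\,m_5=P_2\big(4!\,m_4\big)$ and $4!\,m_4$ contains $+24\,mM_2^1-24\,EM_2$, the residual terms must enter as $+24P_2(mM_2^1)-24P_2EM_2$, not as $+P_2(mM_2^1)-P_2EM_2$ as in the statement; likewise the reduced part acquires $-24EM_2$ (from $-30P_2(mM_1^2)$ and $-48P_2M_2^1$), not $-EM_2$. Carrying the expansion to the end yields a formula agreeing with the displayed one in every term except these three coefficients ($-24,-24,+24$ versus $-1,-1,+1$). The safeguard you propose settles which version is right: for $G=P_{12}$ one has $m_5=\binom{7}{5}=21$, the version with the $24$'s evaluates to $5!\cdot 21=2520$, while the bracket as printed in the statement evaluates to $-46286$, which is not even divisible by $120$. (Consistently, Theorem~\ref{the1}, which the authors derive from Theorem~\ref{dasth1}, does check out on $P_{12}$, so the defect is in the printed statement rather than in the method.) So your derivation is sound, but it proves a corrected formula, not the displayed one; you would have caught this had you actually run the path check you describe rather than only recommending it.
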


Now, Lemmas \ref{5ml1}(1), \ref{5lm2}, \ref{6lmm10}, \ref{6lmm11}, \ref{6lmm12}, \ref{6lmm13}, \ref{6lmm14}, \ref{6lmm15}, \ref{6lmm16}, \ref{6lmm17} and Theorem \ref{dasth1} give the following theorem.

\begin{theorem}\label{the1}
Let $G$ be a graph with $m$ edges and girth $\geq 5$. Then
$m_5(G)=\frac{1}{5!}\Big[m^5+10m^4-(10M_1^2-55)m^3+\big(60M_2^1-90M_1^2
+20M_1^3
+190\big)m^2+\big(15(M_1^2)^2+140M_1^3-376M_1^2-30M_1^4
+492M_2^1-120\alpha_{1,2}
-120\Theta_2+24\Theta_2^++336\big)m
+60(M_1^2)^2
-(60M_2^1+20M_1^3+768)M_1^2-120M_1^4
+24M_1^5+120M_2^2-504\alpha_{1,2}
+96\alpha_{1,3}+24\Theta_2^+-48\Theta_3^++96\Theta_4+24\Theta_3^{+,2}
+336M_1^3
+1440M_2^1+132\Theta_1-600\Theta_2+120\Theta_3+24\Theta_5+48\Theta_6\Big]$.
\end{theorem}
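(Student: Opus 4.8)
The plan is to take the formula of Theorem \ref{dasth1} as the starting point and to eliminate, one at a time, every composite invariant on its right-hand side, replacing each by an expression in the ``atomic'' quantities $m$, $M_1^2$, $M_1^3$, $M_1^4$, $M_1^5$, $M_2^1$, $M_2^2$, $\alpha_{1,2}$, $\alpha_{1,3}$ and the nine $\Theta$-invariants. Since the hypothesis of Theorem \ref{the1} coincides with that of Theorem \ref{dasth1}, and the only auxiliary result carrying a girth restriction, Lemma \ref{5lm2}, again requires just girth $\geq 5$, all of the substitution lemmas apply to $G$ directly.

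First I would dispose of the two ``bare'' composite terms. The summand $12\beta$ is rewritten via Lemma \ref{6lmm17}, which expresses $\beta$ as a linear combination of $\alpha_{1,2}$, $M_1^4$, $M_1^3$, $M_1^2$ and $M_2^1$; the summand $-EM_2$ is rewritten via Lemma \ref{6lmm13}, introducing $\alpha_{1,2}$, $M_2^1$, $M_1^4$, $M_1^3$, $M_1^2$, $m$ and the invariant $\Theta_2$. Next I would substitute the four composite $P_2$-terms $3P_2(M_1^2)^2$, $8P_2(mM_1^3)$, $-6P_2(m^2M_1^2)$ and $P_2(mM_2^1)$ straight from Lemmas \ref{6lmm10}, \ref{6lmm11}, \ref{6lmm12} and \ref{6lmm15}; these four are responsible for the bulk of the $\Theta_1,\dots,\Theta_5$ and $\Theta_2^+$ contributions, with $\Theta_5$ and $\Theta_2^+$ entering only through $P_2(mM_2^1)$.

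The one step that is not purely mechanical is the term $-P_2 EM_2$, for which no lemma gives a closed form. Here I would exploit the linearity of the operator $f\mapsto P_2 f$ in its argument: because $P_2 f(G)=\sum_{X\in\mathcal S_{P_2}(G)}f(G-X)$ and the identity of Lemma \ref{6lmm13} holds verbatim for \emph{every} graph, in particular for each $G-X$, summing that identity over all edges $X$ gives
\begin{equation*}
P_2 EM_2 = P_2\alpha_{1,2}-6\,P_2 M_2^1+\tfrac12 P_2 M_1^4-\tfrac52 P_2 M_1^3+6\,P_2 M_1^2-4\,P_2 m+P_2\Theta_2 .
\end{equation*}
Every summand on the right is now known at $G$ itself: $P_2 m$ from Lemma \ref{5ml1}(1), the four terms $P_2 M_1^2,\ P_2 M_1^3,\ P_2 M_1^4,\ P_2 M_2^1$ from Lemma \ref{5lm2}, $P_2\alpha_{1,2}$ from Lemma \ref{6lmm16} (which supplies $\Theta_3^+$, $\Theta_3^{+,2}$ and $\Theta_6$), and $P_2\Theta_2$ from Lemma \ref{6lmm14} (which supplies $\Theta_1$, $\Theta_3$, $\Theta_4$ and a further $\Theta_3^+$). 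Back-substitution then renders $-P_2 EM_2$ entirely in atomic invariants, and in particular accounts for the appearance of $\Theta_3^+$, $\Theta_3^{+,2}$ and $\Theta_6$ in the final answer.

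Finally I would insert all of these expressions into the Das formula, expand, and collect like terms grouped by their polynomial degree in $m$. The main obstacle is precisely this last bookkeeping stage: about fifteen distinct invariants occur, several of them multiplied by polynomials in $m$ of degree up to five, and the coefficients of the nine $\Theta$-invariants must be tracked through several cancellations. In particular $\Theta_2^+$, $\Theta_3^+$ and $\Theta_3^{+,2}$ reach the final coefficient only through the $P_2\alpha_{1,2}$ and $P_2\Theta_2$ pieces of $-P_2 EM_2$ together with $P_2(mM_2^1)$, so a single sign error there would corrupt the stated value; verifying that the whole sum collapses to the displayed expression, with the prefactor $\tfrac{1}{5!}$ intact, is the heart of the computation.
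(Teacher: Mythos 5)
Your proposal is correct and follows essentially the same route as the paper: the paper's proof consists precisely of substituting Theorem \ref{dasth1} with Lemmas \ref{5ml1}(1), \ref{5lm2}, \ref{6lmm10}, \ref{6lmm11}, \ref{6lmm12}, \ref{6lmm13}, \ref{6lmm14}, \ref{6lmm15}, \ref{6lmm16} and \ref{6lmm17}, and your explicit expansion of $P_2 EM_2$ by applying the identity of Lemma \ref{6lmm13} to each $G-X$ and invoking the linearity of $P_2$ is exactly the (implicit) reason the paper cites Lemmas \ref{6lmm14} and \ref{6lmm16}. No gap; the remaining work is the bookkeeping you describe.
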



\begin{lemma}\label{6lmm18}
Let $G$ be a graph with $n$ vertices and $m$ edges. Then
${\Theta_1}(S(G))=2[M_2^1+M_1^3-M_1^2]$,
${\Theta_2}(S(G))= M_2^1+2M_1^2-4m$,
 ${\Theta_2^+}(S(G))=3M_1^2-4m$,
 ${\Theta_3}(S(G))=4M_2^1-2M_1^2$,
 ${\Theta_3^+}(S(G))=2M_2^1+M_1^2-4m$,
${\Theta_3^{+,2}}(S(G))=\alpha_{1,2}+4M_1^2-M_1^3-8m$,
 ${\Theta_4}(S(G))=\alpha_{1,2}+8M_1^2-16m$,
 ${\Theta_5}(S(G))=4M_1^2+2M_1^4-2M_1^3$, and
 ${\Theta_6}(S(G))=4M_2^1+2M_1^3-4M_1^2$.
\end{lemma}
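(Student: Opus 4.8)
The plan is to exploit the bipartite structure of the subdivision graph. Write $V(S(G)) = V(G) \cup W$, where $W = \{w_e : e \in E(G)\}$ is the set of newly inserted vertices. Every edge of $S(G)$ joins an original vertex to a subdivision vertex, so $S(G)$ is bipartite with parts $V(G)$ and $W$; moreover $deg_{S(G)}(v) = deg_G(v)$ for $v \in V(G)$ and $deg_{S(G)}(w_e) = 2$ for every $e \in E(G)$. These two degree facts are the only structural input the computation needs.

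First I would classify the paths of length two in $S(G)$. Because $S(G)$ is bipartite, the middle vertex of a $P_3$ lies in one part and the two endpoints in the other, which gives exactly two families. In the first family the middle vertex is an original vertex $v$ of degree $d = deg_G(v)$; there are $\binom{d}{2}$ such $P_3$'s, one for each pair of edges at $v$, and both endpoints are subdivision vertices of degree $2$. In the second family the middle vertex is a subdivision vertex $w_e$ with $e = uv \in E(G)$; there is exactly one such $P_3$, namely $u\,w_e\,v$, whose endpoints $u,v$ have degrees $deg_G(u),deg_G(v)$. Substituting the two endpoint/middle degree patterns into the definitions of $\Theta_1,\Theta_2,\Theta_2^+,\Theta_4,\Theta_5$ turns each $\Theta_i(S(G))$ into a sum over $V(G)$ (from the first family) plus a sum over $E(G)$ (from the second). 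Using $\sum_v deg_G(v)^k = M_1^k$ and $\sum_v deg_G(v) = 2m$, together with $\sum_{uv\in E}deg_G(u)deg_G(v) = M_2^1$, $\sum_{uv\in E}[deg_G(u)^2+deg_G(v)^2] = M_1^3$, and $\sum_{uv\in E}[deg_G(u)^2 deg_G(v)+deg_G(u)deg_G(v)^2] = \alpha_{1,2}$, each of these five identities drops out after collecting $\binom{d}{2} = (d^2-d)/2$.

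For the $P_4$ invariants $\Theta_3,\Theta_3^+,\Theta_3^{+,2},\Theta_6$ the bipartite parity forces the two endpoints of any $P_4$ to lie in different parts, so every $P_4$ in $S(G)$ has exactly one original endpoint and one subdivision endpoint. Writing such a path as $a\,w_e\,c\,w_f$, it is determined by the edge $e$ of $G$ (which supplies the two original vertices $a,c$) together with a second edge $f \neq e$ incident to the interior original vertex $c$. Hence, for each edge $e = xy$ one obtains $(deg_G(y)-1)$ paths rooted at $x$ and $(deg_G(x)-1)$ paths rooted at $y$. I would substitute the endpoint degrees $deg_G(a)$ and $2$ (and, for $\Theta_6$, the interior degrees $2$ and $deg_G(c)$) into the definitions and sum over this parametrization; each expression again collapses to a combination of $M_1^2,M_1^3,M_2^1,\alpha_{1,2}$ and $m$ by the same standard identities.

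The only delicate step is the $P_4$ enumeration: one must verify that the map sending a $P_4$ subgraph to the data (original endpoint $a$, central edge $e$, terminal edge $f$) is a bijection, so that no path is counted twice and none is missed. Because the endpoints of a $P_4$ lie in different parts, the roles of the two original vertices $a$ (endpoint) and $c$ (interior), and of the two subdivision vertices $w_e$ (interior) and $w_f$ (endpoint), are all recoverable from the path itself, which secures the bijection. Once this is settled, the remaining manipulations are exactly the routine algebraic simplifications already illustrated in Lemma~\ref{6lmm10}, and comparing the collected coefficients with the claimed right-hand sides finishes the proof.
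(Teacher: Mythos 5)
Your proposal is correct: all nine identities check out under your decomposition (I verified $\Theta_1,\Theta_2,\Theta_2^+,\Theta_4,\Theta_5$ via the two families of $P_3$'s with middle vertex in $V(G)$ versus in the set of subdivision vertices, and $\Theta_3,\Theta_3^+,\Theta_3^{+,2},\Theta_6$ via the parametrization of each $P_4$ in $S(G)$ by its unique original endpoint $a$, the edge $e=ac$, and a second edge $f\neq e$ at $c$). The paper omits this proof entirely as ``straightforward,'' and your bipartite-structure argument, including the bijection check for the $P_4$ enumeration, is precisely the routine verification the authors leave to the reader.
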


\begin{proof}
The proof is straightforward and so it is omitted.
\end{proof}

\begin{lemma}\label{6lmm19}
Let $G$ be a graph with $n$ vertices and $m$ edges. Then
$P_2{\Theta_2}(S(G))=(4M_1^2+2M_2^1+4)m -8m^2-2M_1^3+3M_1^2-6M_2^1-\alpha_{1,2}$.
\end{lemma}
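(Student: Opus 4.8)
The plan is to mimic exactly the substitution-and-simplify strategy already used in Lemma \ref{6lmm14}, where the generic formula for $P_2{\Theta_2}(G)$ was derived, and then specialize all ingredients to the subdivision graph $S(G)$. By Lemma \ref{6lmm14} we already have the closed form
\[
P_2{\Theta_2}(G)=(m+2)\Theta_2-\Theta_1-\Theta_4-2\Theta_3+\Theta_3^+,
\]
valid for an arbitrary graph. Since $S(G)$ is itself a graph, this identity applies verbatim with $G$ replaced by $S(G)$, giving
\[
P_2{\Theta_2}(S(G))=\bigl(m(S(G))+2\bigr)\Theta_2(S(G))-\Theta_1(S(G))-\Theta_4(S(G))-2\Theta_3(S(G))+\Theta_3^+(S(G)).
\]

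First I would record the edge count $m(S(G))=2m$ from the subdivision identity $m(S)=2m(G)$ stated in the Definitions section. Next I would import every $\Theta$-invariant of $S(G)$ directly from Lemma \ref{6lmm18}, which supplies precisely the five quantities appearing above:
\[
\Theta_1(S(G))=2[M_2^1+M_1^3-M_1^2],\quad
\Theta_2(S(G))=M_2^1+2M_1^2-4m,
\]
\[
\Theta_3(S(G))=4M_2^1-2M_1^2,\quad
\Theta_4(S(G))=\alpha_{1,2}+8M_1^2-16m,\quad
\Theta_3^+(S(G))=2M_2^1+M_1^2-4m.
\]
Substituting $m(S(G))=2m$ and these five expressions into the specialized Lemma \ref{6lmm14} identity reduces the entire problem to a single algebraic simplification. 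The leading term $(2m+2)(M_2^1+2M_1^2-4m)$ produces the $m^2$ and $m$ contributions, while the remaining four substitutions contribute the degree-based correction terms.

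The only real work is the bookkeeping: expanding $(2m+2)(M_2^1+2M_1^2-4m)$ to obtain $-8m^2+(2M_2^1+4M_1^2)m+\cdots$, then collecting the constant-in-$m$ pieces $-2M_2^1+4M_1^2+\cdots$ against the $-\Theta_1-\Theta_4-2\Theta_3+\Theta_3^+$ block and checking that everything lands on the claimed right-hand side $(4M_1^2+2M_2^1+4)m-8m^2-2M_1^3+3M_1^2-6M_2^1-\alpha_{1,2}$. I expect the main (though still routine) obstacle to be verifying that the $M_1^3$ and $\alpha_{1,2}$ terms cancel and combine correctly: $M_1^3$ enters only through $\Theta_1(S(G))$, contributing $-2M_1^3$, and $\alpha_{1,2}$ enters only through $\Theta_4(S(G))$, contributing $-\alpha_{1,2}$, so these must survive untouched into the final answer while all $M_1^3$ and $\alpha_{1,2}$ contributions from the other terms (there are none) trivially vanish. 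Since every ingredient is already an established lemma in the excerpt, no new combinatorial identity is needed and the proof is a one-line substitution followed by collecting like terms, which is exactly why I would state it as a direct consequence of Lemmas \ref{6lmm14} and \ref{6lmm18}.
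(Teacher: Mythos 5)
Your proposal is correct and follows essentially the same route as the paper, which likewise derives the result by specializing Lemma \ref{6lmm14} to $S(G)$ and substituting the $\Theta$-values from Lemma \ref{6lmm18}; the expansion of $(2m+2)(M_2^1+2M_1^2-4m)$ together with the remaining four terms does collect to the claimed right-hand side.
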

\begin{proof}
Lemmas \ref{6lmm14}, \ref{6lmm18} and \ref{6lmm1}(1) give our result.
\end{proof}



\begin{lemma}\label{6lmm20}
Let $G$ be a graph with $n$ vertices and $m$ edges. Then
\begin{enumerate}
\item $P_2{\Theta_1}(S(G))=(4M_1^3-4M_1^2+4M_2^1-8)m-2M_1^3+10M_1^2-2M_1^4-4\alpha_{1,2}$,
\item $P_2{\Theta_2^+}(S(G))=(6M_1^2+8)m-8m^2-3M_1^3-M_1^2-4M_2^1$,
\item $P_2{\Theta_3}(S(G))=(8M_2^1-4M_1^2-4)m+2M_1^3+6M_1^2-4M_2^1-4\alpha_{1,2}-2\Theta_2+\Theta_2^+  $,
\item $P_2{\Theta_3^+}(S(G))=2m(2M_2^1+M_1^2-4m)-M_1^3+6M_1^2-6M_2^1-2\alpha_{1,2}-\Theta_2^+  $,
\item $P_2{\Theta_3^{+,2}}(S(G))=(8M_1^2-2M_1^3+2\alpha_{1,2}+4)m-16m^2-M_1^3+3M_1^2-2M_2^1+M_1^4-2M_2^2-2\alpha_{1,2}-\alpha_{1,3}-3\Theta_2^+  $,
\item $P_2{\Theta_4}(S(G))=(16M_1^2+2\alpha_{1,2}+12)m-32m^2-7M_1^3+11M_1^2-14M_2^1-2M_2^2-3\alpha_{1,2}-\alpha_{1,3} $,
\item $P_2{\Theta_5}(S(G))=(4M_1^4-4M_1^3+8M_1^2+16)m+13M_1^3-18M_1^2-8M_2^1-5M_1^4-2M_1^5+\alpha_{1,2}-\alpha_{1,3} $,
\item $P_2{\Theta_6}(S(G))=(4M_1^3-8M_1^2+8M_2^1-12)m+M_1^3+13M_1^2+2M_2^1-2M_1^4-7\alpha_{1,2}$,
\item $P_2{(m\Theta_2)}(S(G))=  (8M_1^2+4M_2^1+16)m^2-16m^3+(4M_1^2-4M_1^3-10M_2^1-2\alpha_{1,2}-2\Theta_2^+-4)m-2(M_1^2)^2
-(M_2^1+4)M_1^2-2M_1^3+8M_2^1+2\Theta_2+2M_1^4+3\alpha_{1,2}+\alpha_{1,3}+\Theta_2^+$,
\item $P_2{(m\Theta_2^+)}(S(G))=(12M_1^2+24)m^2 -16m^3-(6M_1^3+4M_1^2+8M_2^1+8)m-3(M_1^2)^2-3M_1^2+12M_2^1+3M_1^4+2\alpha_{1,2}$.
\end{enumerate}

\end{lemma}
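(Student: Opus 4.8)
The plan is to treat each of the nine assertions by the same two-stage scheme already used in the proof of Lemma~\ref{6lmm19}. First I would produce a \emph{general} reduction formula expressing $P_2\Theta_i(G)$ (and $P_2(m\Theta_2)(G)$, $P_2(m\Theta_2^+)(G)$ for parts~8 and~9) as a polynomial in $m$, the degree-based invariants $M_1^k$, $M_2^k$, $\alpha_{\lambda,\xi}$, and the nine $\Theta$-invariants, valid whenever $G$ has girth $\geq 5$; then I would specialise that formula to $G=S(G)$. Since $S(G)$ has girth $\geq 6 \geq 5$, every such identity applies to it. Once specialised, the right-hand side is rewritten using the closed forms already available for subdivision graphs: the values $\Theta_j(S(G))$ from Lemma~\ref{6lmm18}, the degree invariants $M_1^k(S(G))$, $M_2^k(S(G))$, $\alpha_{\lambda,\xi}(S(G))$ from Theorem~\ref{5th2}(2) and Lemma~\ref{6lmm0}, and, for the product invariants in parts~8 and~9, the powers $P_2 m^k(S(G))$ from Lemma~\ref{6lmm1}. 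Collecting terms then yields the stated polynomials in $m$ and the degree invariants of $G$.

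The general reduction formulas are obtained exactly as in Lemmas~\ref{6lmm10} and~\ref{6lmm14}. Writing $P_2\Theta_i(G)=\sum_{ab\in E}\Theta_i(G-\{a,b\})$, I would expand $\Theta_i(G-\{a,b\})$ by first restricting the defining sum over $\mathcal{S}_{P_3}(G)$ or $\mathcal{S}_{P_4}(G)$ to the configurations surviving the deletion of the two vertices $a,b$, and then replacing each local degree $deg_{G-\{a,b\}}(x)$, which equals $deg_G(x)$ minus the number of endpoints of $ab$ adjacent to $x$, by its value in $G$ minus that drop. Summing over $ab\in E$, the leading piece contributes $m$ times the global invariant, while the degree-drop pieces and the forbidden-overlap pieces reorganise, through inclusion--exclusion, into $P_3$- and $P_4$-indexed sums; these are precisely the $\Theta$-invariants together with the standard degree sums. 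The girth hypothesis guarantees that a path of length two or three together with the deleted edge $ab$ spans no triangle, quadrilateral, or pentagon, so every overlap count is the generic one and no exceptional configurations arise. For parts~8 and~9 one carries along the extra scalar factor $m(G-\{a,b\})=m-deg_G(a)-deg_G(b)+1$, which multiplies the same expansion and produces, after summation, additional $m\Theta_j$ and degree-correlation terms.

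After the identities are in hand, the specialisation is purely mechanical: substitute $m(S(G))=2m$ together with the tabulated values and simplify. As a sanity check, the $\Theta_2$ case must reproduce Lemma~\ref{6lmm19}: inserting the Lemma~\ref{6lmm18} values of $\Theta_1,\Theta_2,\Theta_3,\Theta_3^+,\Theta_4$ into $(m+2)\Theta_2-\Theta_1-\Theta_4-2\Theta_3+\Theta_3^+$ with $m\mapsto 2m$ returns $(4M_1^2+2M_2^1+4)m-8m^2-2M_1^3+3M_1^2-6M_2^1-\alpha_{1,2}$, exactly as required, and the identical arithmetic drives parts~1--9.

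The main obstacle is deriving and bookkeeping the reduction formulas for the $P_4$-based invariants $\Theta_3,\Theta_3^+,\Theta_3^{+,2},\Theta_6$ and for the two product invariants of parts~8 and~9. Deleting the two endpoints of an edge can destroy a path of length three in several essentially different ways, by removing a terminal vertex, an interior vertex, or an edge of the path, and each case interacts differently with the degree drops at the surviving endpoints, so the inclusion--exclusion has many more cases than in the $P_3$ computation of Lemma~\ref{6lmm14}. Keeping these cases disjoint, and verifying that girth $\geq 5$ genuinely excludes every short-cycle coincidence that would otherwise spoil the generic counts, is where the care is needed; once the $P_4$ formulas are correct, the subdivision-graph substitution and the final collection of terms are routine.
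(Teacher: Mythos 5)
Your overall strategy---expand $P_2f(G)=\sum_{ab\in E}f(G-\{a,b\})$, reduce to a polynomial identity, then specialise to $S(G)$ using Lemma~\ref{6lmm18} and Theorem~\ref{5th2}(2)---is the same computational route the paper takes (its proof of Lemma~\ref{6lmm20} is just ``tedious calculations similar to Lemma~\ref{6lmm10}'', and your scheme is exactly how the paper handles the one documented instance, Lemma~\ref{6lmm19} via Lemmas~\ref{6lmm14} and~\ref{6lmm18}). Your sanity check for $\Theta_2$ is correct and does reproduce Lemma~\ref{6lmm19}.

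There is, however, one concrete overstatement in your stage-one claim: for the $P_4$-based invariants $\Theta_3,\Theta_3^+,\Theta_3^{+,2},\Theta_6$ the general girth-$\geq 5$ reduction formula does \emph{not} close up within ``the nine $\Theta$-invariants together with the standard degree sums''. The degree-drop corrections at the endpoints $u,x$ of a path $uvwx$ come from edges $ab$ disjoint from the path with $a$ adjacent to $u$ or $x$; summing over such pairs produces degree-weighted sums over $\mathcal{S}_{P_5}(G)$ (and over a $P_4$ with a pendant edge at an end), and the overlap term $\sum_{uvwx}deg_G(u)deg_G(x)\big(deg_G(u)+deg_G(v)+deg_G(w)+deg_G(x)-3\big)$ likewise involves $P_4$-sums such as $\sum deg_G(u)deg_G(v)deg_G(x)$ that are not among $\Theta_1,\dots,\Theta_6$. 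So a literal ``general formula in the listed invariants, then substitute'' cannot be carried out as written. The fix is either to enlarge the family of auxiliary invariants and evaluate the new ones on $S(G)$ as well, or to perform the expansion directly on $S(G)$, where every $P_4$ and $P_5$ alternates through degree-two subdivision vertices and these extra sums collapse to $\Theta_2(G)$, $\Theta_2^+(G)$ and the degree invariants of $G$ --- which is why only those survive in the stated formulas (items 3--5, 9). With that correction the plan is sound; without it, stage one stalls for four of the ten items.
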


\begin{proof}
The proof has tedious calculations similar to  Lemma \ref{6lmm10}.
\end{proof}

\begin{lemma}\label{6lmm21}
Let $G$ be a graph with $n$ vertices and $m$ edges. Then
\begin{enumerate}
\item $P_2{\alpha_{1,3}}(S(G))=(16M_1^2+4M_1^4+20)m+4M_1^3-17M_1^2-14M_2^1-7M_1^4-2M_1^5-\alpha_{1,3} $.
\item $P_2{(m\alpha_{1,2})}(S(G))=(8M_1^3+16M_1^2+8)m^2-(22M_1^3+6M_1^2+12M_2^1+4M_1^4+2\alpha_{1,2}+4)m
-4(M_1^2)^2-(2M_1^3+3)M_1^2+9M_1^3+4M_2^1+6M_1^4+2M_1^5+2M_2^2+9\alpha_{1,2} $.
\item $P_2{(M_1^2M_2^1)}(S(G))=(4m-13)(M_1^2)^2+(16m^2-4M_1^3-2M_2^1-8m-10)M_1^2+16m^2-(8M_1^3+8M_2^1)m+6M_1^3
+18M_2^1+4\Theta_2+6M_1^4+2M_1^5+10\alpha_{1,2}+\alpha_{1,3}$.
\item $P_2{(m^2M_2^1)}(S(G))=(16M_1^2+16)m^3-(8M_1^3+28M_1^2+8M_2^1+16)m^2+(8M_1^3-8(M_1^2)^2+8M_1^2
+8M_1^4+32M_2^1+4\alpha_{1,2}+4)m
+4(M_1^2)^2+(2M_1^3+1)M_1^2-14M_2^1-4M_1^4-2M_1^5-5\alpha_{1,2}-\alpha_{1,3}   $.
\item $P_2{M_2^2}(S(G))=(8m-9)M_1^3-8m+12M_1^2-4M_1^4-3\alpha_{1,2} $.
\end{enumerate}
\end{lemma}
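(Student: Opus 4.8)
The plan is to treat all five parts by the same mechanism already used in Lemmas \ref{6lmm4}--\ref{6lmm9}, exploiting the special structure of the subdivision graph. In $S(G)$ every edge joins an original vertex $v$ (with $deg_{S(G)}(v)=deg_G(v)$) to a subdivision vertex $w_e$ (with $deg_{S(G)}(w_e)=2$), so the $2m$ edges of $S(G)$ are in bijection with incident pairs $(v,e)$, where $v\in V(G)$ and $e\ni v$. This lets me rewrite every sum over the edges of $S(G)$ as $\sum_{v\in V}\sum_{uv\in E}$, a double sum over vertex--edge incidences that, once the inner degree-dependent expressions are expanded, collapses onto sums $\sum_{uv\in E}(\cdots)$ over the edges of $G$. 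Since $G-P_2$ deletes the two endpoints of an edge together with all incident edges, deleting the edge $vw_e$ from $S(G)$ removes the original vertex $v$ and the subdivision vertex $w_e$; in particular $m(S(G)-\{v,w_e\})=2m-deg_G(v)-1$, which is the factor recurring in the earlier lemmas.

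First I would write, for each invariant $f$ in the list, $P_2 f(S(G))=\sum_{v\in V}\sum_{uv\in E} f\big(S(G)-\{v,w_e\}\big)$ and express $f(S(G)-\{v,w_e\})$ as $f(S(G))$ minus correction terms. Three kinds of corrections arise: (i) the edges lost at $v$ and at $w_e$; (ii) the degree drop of $u$ by one, caused by deleting $w_e$, which perturbs the $deg_G(u)-1$ remaining edges $uw_g$; and (iii) the degree drops, from $2$ to $1$, of the subdivision vertices $w_f$ still adjacent to $v$, which perturb their partner edges $u_f w_f$. For the product invariants---Part 2 ($m\alpha_{1,2}$), Part 3 ($M_1^2 M_2^1$) and Part 4 ($m^2 M_2^1$)---I would expand the product at the deleted graph, writing each factor as its $S(G)$-value minus its own correction, and then collect the cross terms; the leading factors $m(S(G)-\{v,w_e\})=2m-deg_G(v)-1$ and $M_1^2(S(G)-\{v,w_e\})$ follow from $m(S(G))=2m$ and $M_1^2(S(G))=M_1^2+4m$ by the same per-edge deletion count used in Lemmas \ref{6lmm4}--\ref{6lmm6}.

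The final step is substitution: after the double sum is reduced to sums of the form $\sum_{uv\in E}(\cdots)$, the resulting expressions are exactly the basic degree invariants of $S(G)$, which Theorem \ref{5th2}(2) together with Lemmas \ref{6lmm0} and \ref{6lmm18} convert into invariants of $G$. Because the girth of $S(G)$ is at least $6$, there are no short cycles to create overlap between the first-neighbour and second-neighbour corrections, so the bookkeeping stays linear and the cross terms assemble cleanly into $M_2^2$, $\alpha_{1,2}$, $\alpha_{1,3}$ and the remaining listed quantities.

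The main obstacle I anticipate is purely combinatorial bookkeeping rather than conceptual difficulty: in Parts 3 and 4 the product structure forces me to multiply a first-order degree correction of one factor by a first-order correction of the other, producing second-neighbour sums such as $\sum_{uv\in E} deg_G(u)^2 deg_G(v)$ and $\sum_{uv\in E}\big(deg_G(u)deg_G(v)\big)^2$ that must be matched precisely against $\alpha_{1,2}$, $\alpha_{1,3}$, $M_2^1$ and $M_2^2$; a single miscounted incidence---for instance forgetting that the edge $vw_e$ is shared between the $v$-corrections and the $w_e$-corrections---throws off a constant or linear-in-$m$ term. I would guard against this by verifying each formula on $G=P_n$ and on a star $K_{1,t}$, where every invariant on both sides is computable in closed form.
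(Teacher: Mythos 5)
Your proposal follows essentially the same route as the paper: the paper likewise rewrites $P_2f(S(G))$ as the incidence double sum $\sum_{v\in V}\sum_{uv\in E}$, expresses $f$ of the deleted graph as $f(S(G))$ minus exactly the three kinds of corrections you identify (lost edges at $v$ and $w_e$, the unit degree drop at $u$ affecting its remaining $deg_G(u)-1$ subdivision neighbours, and the $2\to 1$ drops at the subdivision vertices $w_f$ still incident to $v$, visible in the paper's terms such as $\sum_{xv\in E(G-uv)}(deg_G(x)^3+7deg_G(x))$), and then collapses to sums over $E(G)$ before substituting via Theorem \ref{5th2}(2). Your sanity checks on $P_n$ and $K_{1,t}$ are a sensible addition, but the argument itself is the paper's.
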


\begin{proof}
By definitions of $S(G)$ we have
{\small
\begingroup\allowdisplaybreaks\begin{align*}
P_2{\alpha_{1,3}}(S(G))&=2m{\alpha_{1,3}}(S(G))-\sum_{v\in V}\sum_{uv\in E}\Big[\big(2deg_G(v)^3+8deg_G(v)\big)deg_G(v) +
2deg_G(u)^3\\
&+8deg_G(u)+\big(2deg_G(u)^3+8deg_G(u)  \big)\big(deg_G(u)-1  \big)-\Big(2(deg_G(u)-1)^3\\
&+8(deg_G(u)-1)  \Big)\big(deg_G(u)-1  \big)+\sum_{xv\in E(G-uv)}\big(deg_G(x)^3+7deg_G(x)\big) \Big]\\
&=(16M_1^2+4M_1^4+20)m+4M_1^3-17M_1^2-14M_2^1-7M_1^4-2M_1^5-\alpha_{1,3},\\
P_2{(m\alpha_{1,2})}(S(G))&=\sum_{v\in V}\sum_{uv\in E}\Big[2m-deg_G(v)-1\Big]\Big[{\alpha_{1,2}}(S(G))-\big(2deg_G(v)^2+4deg_G(v)\\
&\big)deg_G(v) -2deg_G(u)^2-4deg_G(u)-\big(2deg_G(u)^2+4deg_G(u)  \big)\big(deg_G(u)-1  \big)\\
&+\Big(2(deg_G(u)-1)^2-4(deg_G(u)-1)  \Big)\big(deg_G(u)-1  \big)\\
&-\sum_{xv\in E(G-uv)}\big(deg_G(x)^2+3deg_G(x)\big) \Big]\\
&=(8M_1^3+16M_1^2+8)m^2-(22M_1^3+6M_1^2+12M_2^1
+4M_1^4+2\alpha_{1,2}+4)m\\
&-4(M_1^2)^2-(2M_1^3+3)M_1^2+9M_1^3+4M_2^1+6M_1^4+2M_1^5+2M_2^2+9\alpha_{1,2},\\
P_2{(M_1^2M_2^1)}(S(G))&=\sum_{v\in V}\sum_{uv\in E}\Big[M_1^2(S(G))-deg_G(v)^2-3deg_G(v)-2deg_G(u)\Big]\Big[M_2^1(S(G))\\
&-2deg_G(v)^2 -2deg_G(u)-2deg_G(u)\big(deg_G(u)-1  \big)+2\big(deg_G(u)-1\big)^2\\
&-\sum_{xv\in E(G-uv)}deg_G(x) \Big]\\
&=(4m-13)(M_1^2)^2+(16m^2-4M_1^3-2M_2^1-8m-10)M_1^2+16m^2\\
&-(8M_1^3+8M_2^1)m+6M_1^3+18M_2^1+4\Theta_2
+6M_1^4+2M_1^5+10\alpha_{1,2}+\alpha_{1,3},\\
P_2{(m^2M_2^1)}(S(G))&=\sum_{v\in V}\sum_{uv\in E}\Big[2m-deg_G(v)-1\Big]^2\Big[M_2^1(S(G))
-2deg_G(v)^2 -2deg_G(u)\\
&-2deg_G(u)\big(deg_G(u)-1  \big)
+2\big(deg_G(u)-1\big)^2-\sum_{xv\in E(G-uv)}deg_G(x) \Big]\\
&= (16M_1^2+16)m^3-(8M_1^3+28M_1^2+8M_2^1+16)m^2+(8M_1^3-8(M_1^2)^2\\
&+8M_1^2+8M_1^4+32M_2^1+4\alpha_{1,2}+4)m
+4(M_1^2)^2+(2M_1^3+1)M_1^2-14M_2^1\\
&-4M_1^4-2M_1^5-5\alpha_{1,2}-\alpha_{1,3},\\
P_2{M_2^2}(S(G))&=\sum_{v\in V}\sum_{uv\in E}\Big[M_2^1(S(G))-4deg_G(v)^3 -4deg_G(u)^2-4deg_G(u)^2\big(deg_G(u)\\
&-1  \big)+4\big(deg_G(u)-1\big)^3-\sum_{xv\in E(G-uv)}3deg_G(x)^2 \Big]\\
&=(8m-9)M_1^3-8m+12M_1^2-4M_1^4-3\alpha_{1,2},
\end{align*}\endgroup}
proving the lemma.
\end{proof}

\begin{theorem}\label{66th1}
Let $G$ be a graph with $m$ edges. Then
$m_6(S(G))=\frac{1}{6!}\Big[64m^6-480m^5+720m^4+600m^3-360m^2-480m
-720M_1^5-2160\alpha_{1,2}-720\alpha_{1,3}+540(M_1^2)^2-2340m^2M_1^2+2160mM_1^3-1080M_1^4+720M_2^1-240M_1^2M_1^3m-
120M_1^6-720M_2^2+600M_1^2M_1^3+1680M_1^2m^3-810(M_1^2)^2m-1920M_1^3m^2+1620M_1^4m+3600M_2^1m-720\Theta_2
-1260mM_1^2+720M_1^2+480M_1^3-240m^4M_1^2+320m^3M_1^3+360M_1^2M_2^1
+90M_1^2M_1^4+180(M_1^2)^2m^2-360M_1^4m^2-1440M_2^1m^2+288M_1^5m+1440\alpha_{1,2}m+40(M_1^3)^2-15(M_1^2)^3\Big]$.
\end{theorem}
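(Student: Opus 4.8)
The plan is to apply the matching recursion \eqref{5eq1} with $k=6$ to the graph $S(G)$, so that
$$m_6(S(G)) = \tfrac{1}{6}\,P_2 m_5(S(G)) = \tfrac{1}{6}\sum_{X \in \mathcal{S}_{P_2}(S(G))} m_5\big(S(G) - X\big),$$
and then to evaluate each summand $m_5(S(G)-X)$ by the closed form of Theorem \ref{the1}. The legitimacy of this substitution rests on a girth check: the text already records that the girth of $S(G)$ is at least $6$, and deleting the two end-vertices of an edge $X$ can only destroy cycles, never shorten them, so every $S(G)-X$ again has girth $\geq 6 \geq 5$, which is exactly the hypothesis of Theorem \ref{the1}.

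First I would write $m_5(H) = \frac{1}{5!}\Phi(H)$, where $\Phi$ is the degree-based polynomial of Theorem \ref{the1}, and observe that the operator $f \mapsto P_2 f(H) = \sum_X f(H-X)$ is linear in $f$. Consequently
$$m_6(S(G)) = \frac{1}{6!}\sum_i c_i\, P_2 f_i(S(G)),$$
where the $f_i$ run over the monomials appearing in $\Phi$ (the powers $m,\dots,m^5$; the products $m^aM_1^2$, $m^aM_1^3$, $m^aM_2^1$, $m\alpha_{1,2}$, $m\Theta_2$, $m\Theta_2^+$; the quadratic degree-terms $(M_1^2)^2$, $M_1^2M_1^3$, $M_1^2M_2^1$, $M_2^2$; the single invariants $M_1^2,\dots,M_1^5$, $\alpha_{1,2}$, $\alpha_{1,3}$; and the path-invariants $\Theta_1,\dots,\Theta_6$, $\Theta_2^+$, $\Theta_3^+$, $\Theta_3^{+,2}$), and the $c_i$ are the corresponding integer coefficients. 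Each $P_2 f_i(S(G))$ has already been computed in the preparatory lemmas: the powers of $m$ from Lemma \ref{6lmm1}, the terms $m^aM_1^b$ and $m^aM_2^1$ from Theorem \ref{5th2}(4)--(6) and Lemmas \ref{6lmm4}--\ref{6lmm6}, the linear $M_1^b$ and $M_2^1$ pieces from Lemmas \ref{6lmm2} and \ref{6lmm3}, the quadratic and mixed degree-terms from Theorem \ref{5th2}(3) and Lemmas \ref{6lmm8}, \ref{6lmm9}, \ref{6lmm21}, the $\alpha$-terms from Lemmas \ref{6lmm7} and \ref{6lmm21}, and all the $\Theta$-terms from Lemmas \ref{6lmm19} and \ref{6lmm20}. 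Wherever one of these expressions still carries an invariant of $S(G)$, I would reduce it to invariants of $G$ using Theorem \ref{5th2}(2) and Lemma \ref{6lmm0}.

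The remaining work is then purely formal: insert all of these closed forms, multiply by the $c_i$, and collect like terms. The step I expect to be the main obstacle is precisely this collection. The individual $P_2 f_i(S(G))$ carry the high-order path-invariants $\Theta_1,\Theta_3,\Theta_4,\Theta_5,\Theta_6,\Theta_3^+,\Theta_3^{+,2}$ and the residual $\Theta_2,\Theta_2^+$ produced inside Lemma \ref{6lmm20}(3), (4), (5), (9), as well as genuinely cubic degree-terms such as $(M_1^2)^3$; yet none of these survive in the target formula except for the single $-720\,\Theta_2$. Verifying that every such path-invariant and every excess degree-term cancels in the final sum, leaving only powers of $m$, the degree sums $M_1^2,\dots,M_1^6$, $M_2^1$, $M_2^2$, $\alpha_{1,2}$, $\alpha_{1,3}$ and the lone $\Theta_2$, is the delicate bookkeeping that gives the identity its content. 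I would organise it by grouping contributions according to the highest structural invariant they contain and checking the cancellations group by group, starting from $\Theta_3,\Theta_4,\Theta_5,\Theta_6$ down to $\Theta_2^+$ and $(M_1^2)^3$, before finally reading off the coefficients of the surviving monomials.
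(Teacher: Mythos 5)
Your proposal follows exactly the route the paper takes: apply the recursion $m_6(S(G))=\tfrac{1}{6}P_2m_5(S(G))$ from Equation \eqref{5eq1}, justify the use of Theorem \ref{the1} on each $S(G)-X$ via the girth-$\geq 6$ observation, expand by linearity of $P_2$ over the monomials of the $m_5$-formula, and substitute the precomputed values of $P_2f_i(S(G))$ from Theorem \ref{5th2} and Lemmas \ref{6lmm1}--\ref{6lmm9}, \ref{6lmm19}--\ref{6lmm21}. This matches the paper's (very terse) proof, and your identification of which lemma supplies each term, together with the remark that the final bookkeeping must cancel all $\Theta$-invariants except $\Theta_2$, is correct.
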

\begin{proof}
The proof follows from Theorems \ref{the1} and  \ref{5th2},  Equation \ref{5eq1} and Lemmas \ref{6lmm1},
\ref{6lmm2}, \ref{6lmm3}, \ref{6lmm4}, \ref{6lmm5}, \ref{6lmm6}, \ref{6lmm7}, \ref{6lmm8}, \ref{6lmm9}, \ref{6lmm19}, \ref{6lmm20} and \ref{6lmm21}.
\end{proof}

We mention here a useful result of Zhou and Gutman \cite{19}.

\begin{theorem}\label{66th2}
Let $G$ be an $n-$vertex  forest. Then $c_{n-k}(G)=m_k(S(G))$, for $0\leq k\leq n$.
\end{theorem}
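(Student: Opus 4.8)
The plan is to route through the adjacency spectrum of the subdivision graph $S(G)$ and to exploit two classical facts: that a forest's characteristic polynomial coincides with its matching polynomial, and that the Laplacian and signless Laplacian of a bipartite graph are cospectral. First I would record the block structure of $A(S(G))$. Ordering the $n+m$ vertices of $S(G)$ with the original vertices of $G$ first and the $m$ subdivision vertices (one per edge) second, the unsigned incidence matrix $R\in\{0,1\}^{n\times m}$ of $G$ gives
$$A(S(G))=\begin{pmatrix}0_n & R\\ R^{T} & 0_m\end{pmatrix},$$
and a Schur-complement computation (valid first for $x\neq 0$, then for all $x$ by polynomial identity) yields
$$\det\!\big(xI_{n+m}-A(S(G))\big)=x^{m-n}\det\!\big(x^2 I_n-RR^{T}\big).$$
Here $RR^{T}=D(G)+A(G)=Q(G)$ is the signless Laplacian, since its diagonal entries count incident edges and its off-diagonal entries are adjacency indicators.

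Next I would use that $G$, being a forest, is bipartite, so $L(G)=D-A$ and $Q(G)=D+A$ are conjugate by the diagonal $\pm 1$ matrix that flips the signs on one side of the bipartition; they are therefore cospectral, giving $\det(x^2 I_n-Q(G))=\det(x^2 I_n-L(G))=\psi(G,x^2)$. Combining this with the previous display,
$$\det\!\big(xI_{n+m}-A(S(G))\big)=x^{m-n}\,\psi(G,x^2)=\sum_{k=0}^{n}(-1)^{n-k}c_k\,x^{2k+m-n}.$$

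Finally I would invoke the fact that $S(G)$ is itself a forest, because subdividing edges neither creates nor destroys cycles; hence its characteristic polynomial equals its matching polynomial, $\det(xI_{n+m}-A(S(G)))=\alpha(S(G),x)=\sum_{j}(-1)^{j}m_j(S(G))\,x^{(n+m)-2j}$. Equating exponents, $2k+m-n=n+m-2j$ forces $j=n-k$, and comparing the coefficients of the matching power of $x$ gives $(-1)^{n-k}c_k=(-1)^{n-k}m_{n-k}(S(G))$, i.e.\ $c_k=m_{n-k}(S(G))$, which is exactly $c_{n-k}(G)=m_k(S(G))$ for $0\le k\le n$ after relabeling. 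The endpoint cases are consistent with Lemma~\ref{l1}: $c_n=m_0(S(G))=1$, and $c_0=m_n(S(G))=0$ since $S(G)$ has only $n+m<2n$ vertices for a forest. The only genuinely nontrivial inputs are the two classical identities (forest characteristic $=$ matching polynomial, and bipartite $L$/$Q$ cospectrality); with those in hand the remainder is just the Schur complement and a coefficient comparison, so I expect the main obstacle to be assembling and justifying these two ingredients cleanly rather than any calculation.
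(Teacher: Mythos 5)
Your proposal is correct. The paper itself gives no proof of this statement --- it is quoted as a known result of Zhou and Gutman \cite{19} --- and your argument is a clean, self-contained reconstruction of exactly the mechanism behind that reference: the block form of $A(S(G))$ with the incidence matrix $R$, the Schur complement giving $\det(xI_{n+m}-A(S(G)))=x^{m-n}\det(x^2I_n-RR^{T})$ with $RR^{T}=Q(G)$, the $L$/$Q$ cospectrality for bipartite graphs, and the coincidence of the characteristic and matching polynomials of the forest $S(G)$ (the paper's reference \cite{4}). All steps check out, including the exponent bookkeeping $j=n-k$; the only point worth making explicit is that for a disconnected forest $m-n=-c<-1$, so one should note that the low-order Laplacian coefficients $c_0=\dots=c_{c-1}$ vanish (the multiplicity of the zero Laplacian eigenvalue equals the number of components), which is what makes $x^{m-n}\psi(G,x^2)$ a genuine polynomial --- your ``valid for $x\neq0$, then by polynomial identity'' remark covers this, but it deserves a sentence.
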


We are now ready to prove the main result of this section.

\begin{theorem}\label{66th3}
Let $G$ be a  forest with $m$ edges. Then
$c_{n-6}(G)=\frac{1}{6!}\Big[64m^6-480m^5+720m^4+600m^3-360m^2-480m
-720M_1^5-2160\alpha_{1,2}-720\alpha_{1,3}+540(M_1^2)^2-2340m^2M_1^2+2160mM_1^3-1080M_1^4+720M_2^1-240M_1^2M_1^3m-
120M_1^6-720M_2^2+600M_1^2M_1^3+1680M_1^2m^3-810(M_1^2)^2m-1920M_1^3m^2+1620M_1^4m+3600M_2^1m-720\Theta_2
-1260mM_1^2+720M_1^2+480M_1^3-240m^4M_1^2+320m^3M_1^3+360M_1^2M_2^1
+90M_1^2M_1^4+180(M_1^2)^2m^2-360M_1^4m^2-1440M_2^1m^2+288M_1^5m+1440\alpha_{1,2}m+40(M_1^3)^2-15(M_1^2)^3\Big]$.
\end{theorem}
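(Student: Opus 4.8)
The plan is to obtain the formula by combining the two results that immediately precede it, the forest hypothesis entering only through the Zhou--Gutman identity. First I would apply Theorem \ref{66th2}: since $G$ is an $n$-vertex forest, $c_{n-k}(G)=m_k(S(G))$ for all $0\le k\le n$, and in particular
\[
c_{n-6}(G)=m_6(S(G)).
\]
It then remains only to insert a closed formula for $m_6(S(G))$.

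That formula is exactly the content of Theorem \ref{66th1}, which holds for an arbitrary graph $G$ with $m$ edges. No additional hypothesis is needed at this stage: the subdivision graph $S(G)$ always has girth at least $6$, so every reduction result feeding Theorem \ref{66th1} (Lemmas \ref{6lmm1}--\ref{6lmm21}, together with Theorem \ref{the1}, which requires only girth $\ge 5$) is applicable to $S(G)$. Reading off the right-hand side of Theorem \ref{66th1} and substituting it for $m_6(S(G))$ in the displayed identity produces precisely the asserted expression for $c_{n-6}(G)$.

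There is no genuine obstacle left in this final step, since the entire computational effort has already been discharged in proving Theorem \ref{66th1}: there $m_6(S(G))$ was assembled from $m_5(S(G))$ via the matching recurrence \eqref{5eq1} and from the catalogue of $P_2$-type degree invariants evaluated on $S(G)$ (Theorem \ref{5th2} and Lemmas \ref{6lmm1}--\ref{6lmm21}). The present theorem is thus the transcription of that result into the language of Laplacian coefficients, effected by the single substitution $c_{n-6}(G)=m_6(S(G))$, which is legitimate exactly because $G$ is a forest.
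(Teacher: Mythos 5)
Your proposal is correct and follows exactly the paper's own proof: the paper likewise derives the formula by combining Theorem \ref{66th2} (giving $c_{n-6}(G)=m_6(S(G))$ for a forest) with the expression for $m_6(S(G))$ in Theorem \ref{66th1}. Your additional remark that the girth-$\geq 6$ property of $S(G)$ justifies the applicability of the supporting lemmas is a welcome clarification, but the substance of the argument is identical.
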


\begin{proof}
The proof follows from Theorems \ref{66th1} and \ref{66th2}.
\end{proof}

If $T$ is an $n-$vertex tree, then $m=n-1$. Therefore by  previous theorem we have the following corollary:

\begin{corollary}\label{66th3}
Let $T$ be a tree on $n$ vertices. Then
$c_{n-6}(T)=\frac{1}{6!}\Big[(8(n-1)(8n^5-100n^4+410n^3-635n^2+355n-98)
-240M_1^2M_1^3n+1440\alpha_{1,2}n+288M_1^5n+9420M_1^2n-1170(M_1^2)^2n+2340M_1^4n+6480M_2^1n
+6960M_1^3n-1440M_2^1n^2+180(M_1^2)^2n^2-360M_1^4n^2+320M_1^3n^3-2880M_1^3n^2
-240M_1^2n^4+2640M_1^2n^3-8820M_1^2n^2+360M_1^2M_2^1+90M_1^2M_1^4+840M_1^2M_1^3
-15(M_1^2)^3+40(M_1^3)^2+1530(M_1^2)^2-720\Theta_2-120M_1^6-720M_2^2-3920M_1^3-3060M_1^4
-4320M_2^1-2280M_1^2-720\alpha_{1,2}-1008M_1^5-3600\alpha_{1,2}\Big]$.
\end{corollary}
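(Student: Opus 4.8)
The plan is to derive Corollary \ref{66th3} directly from Theorem \ref{66th3} by the purely algebraic substitution $m = n-1$, valid because every $n$-vertex tree $T$ is a connected forest with exactly $n-1$ edges. First I would take the exact formula for $c_{n-6}(G)$ of a forest from Theorem \ref{66th3} and substitute $m = n-1$ into each of the monomials in $m$ appearing there. The degree-based invariants $M_1^2, M_1^3, M_1^4, M_1^5, M_1^6, M_2^1, M_2^2, \alpha_{1,2}, \alpha_{1,3}$ and $\Theta_2$ carry over unchanged, since these depend only on the degree sequence and local structure of $T$ and not on any identity special to trees; only the pure powers of $m$ and the mixed terms of the form $m^j \cdot (\text{invariant})$ need to be expanded.

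The heart of the computation is therefore the expansion of the $m$-only part. The terms $64m^6 - 480m^5 + 720m^4 + 600m^3 - 360m^2 - 480m$ become a degree-six polynomial in $n$ after substituting $m = n-1$; I would collect these into the factored form $8(n-1)(8n^5 - 100n^4 + 410n^3 - 635n^2 + 355n - 98)$ that appears in the corollary, verifying the factorization by checking that $n=1$ is a root (which it must be, since a one-vertex tree has $m=0$ and the $m$-only polynomial vanishes at $m=0$) and matching the leading coefficient $64 = 8 \cdot 8$. Next I would substitute $m=n-1$ into the mixed terms: for instance $1680 M_1^2 m^3$ yields $1680 M_1^2 (n-1)^3$, which expands and then recombines with the contributions from $-2340 m^2 M_1^2$, $-810(M_1^2)^2 m$, $-1260 m M_1^2$, $720 M_1^2$, $-240 m^4 M_1^2$, $180(M_1^2)^2 m^2$ and similar $M_1^2$-bearing terms to produce the consolidated $M_1^2$-coefficients $2640 M_1^2 n^3$, $-8820 M_1^2 n^2$, $9420 M_1^2 n$, $-2280 M_1^2$, and the $(M_1^2)^2$ terms $180(M_1^2)^2 n^2$, $-1170(M_1^2)^2 n$, $1530(M_1^2)^2$ listed in the corollary. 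I would handle the $M_1^3$, $M_1^4$, $M_1^5$ and $M_2^1$ families analogously, tracking each power of $n$ after expanding the binomials $(n-1)^j$.

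The main obstacle is entirely bookkeeping: after substituting $m=n-1$ there are a large number of monomials to expand via the binomial theorem, and the terms that must be merged (those multiplying the same invariant and the same power of $n$) are scattered across the long bracketed expression in Theorem \ref{66th3}. Care is needed in particular with the $M_1^2$-linear and $(M_1^2)^2$-linear coefficients, where several source terms of different degree in $m$ collapse onto the same $n$-power after expansion, so a sign or combinatorial-coefficient slip would be easy to make. I would organize the computation by grouping all contributions according to the invariant they carry (pure-$n$, $M_1^2$, $(M_1^2)^2$, $(M_1^2)^3$, $M_1^3$, $(M_1^3)^2$, $M_1^2 M_1^3$, $M_1^4$, $M_1^2 M_1^4$, $M_1^5$, $M_1^6$, $M_2^1$, $M_1^2 M_2^1$, $M_2^2$, $\alpha_{1,2}$, $\alpha_{1,3}$, $\Theta_2$), expand the binomials within each group, and sum the resulting $n$-polynomial coefficients; the invariant-free terms and the terms with invariants that are not multiplied by any power of $m$ (such as $-720\Theta_2$, $-120M_1^6$, $-720M_2^2$, $40(M_1^3)^2$, $-15(M_1^2)^3$, $360M_1^2M_2^1$, $90M_1^2M_1^4$) pass through unchanged and serve as a useful consistency check against the corollary's constant-in-$n$ coefficients. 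This reduces the proof to invoking Theorem \ref{66th3} and performing the substitution, which completes the argument.
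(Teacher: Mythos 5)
Your proposal is correct and is exactly the paper's own argument: the paper derives the corollary from Theorem \ref{66th3} by the single observation that an $n$-vertex tree has $m=n-1$ edges and then performs the same substitution and regrouping you describe. Nothing further is needed.
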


\section{Laplacian Coefficients and the Number of Closed Walks}

Let $G$ be a graph. A \textit{walk} in $G$ is a sequence $W:v_{i_0}e_{i_1}v_{i_1}e_{i_2}v_{i_2}\ldots\,e_{i_k}v_{i_k}$ of vertices and edges of $G$ in such a way that for each $j$, $0\leq\,j\leq k-1$, $v_{i_j}$ and $v_{i_{j+1}}$ are end points of the edge $e_{i_{j+1}}$ in  $G$. The walk is said to be \textit{closed} if it begins and ends at the same vertex. The number of edges of a walk is called the \textit{length} of the walk.  The number of closed walk of a given length $k$, is denoted by $\mathcal{W}_k(G)$. It is easy to see that, in each graph $G$,  $\mathcal{W}_1(G)=0$, $\mathcal{W}_2(G)=2m(G)$ and  $\mathcal{W}_3(G)=6t(G)$.

The line graph of a given graph $G$ is another graph $L_1(G)$ that represents the adjacencies between edges of $G$.  This graph is constructed in this way: any  edge in $G$ will be a vertex in $L_1(G)$ and for two edges in $G$ with a common vertex, make an edge between their corresponding vertices in $L_1(G)$. For integer $k$, $k \geq2$, we define:
$L_k(G)=L_1(L_{k-1}(G))$ and $L_0(G)=G$.

\begin{theorem}\label{ttth0} $($ See {\rm \cite[Theorem 1.9]{book1}}$)$ Let $G$ be a graph with adjacency matrix $A$ and $k$ be a positive integer. Then $\Tr{A^k} =\mathcal{W}_k(G)$.
 \end{theorem}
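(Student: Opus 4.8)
The final statement is Theorem \ref{ttth0}, which asserts that for a graph $G$ with adjacency matrix $A$, the trace of $A^k$ equals the number $\mathcal{W}_k(G)$ of closed walks of length $k$ in $G$. The plan is to exploit the combinatorial meaning of matrix powers of the adjacency matrix: the $(i,j)$ entry of $A^k$ counts walks of length $k$ from vertex $u_i$ to vertex $u_j$, and the diagonal entries therefore count closed walks, so summing them (i.e.\ taking the trace) yields the total number of closed walks of length $k$.

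First I would establish the fundamental counting identity for entries of $A^k$, namely that $(A^k)_{ij}$ equals the number of walks of length $k$ from $u_i$ to $u_j$. This is proved by induction on $k$. For the base case $k=1$, the definition of the adjacency matrix gives $(A)_{ij} = a_{ij} = 1$ exactly when $u_iu_j \in E$, which is precisely the number of walks of length one from $u_i$ to $u_j$. For the inductive step, I would write $(A^{k})_{ij} = \sum_{\ell=1}^{n} (A^{k-1})_{i\ell}\,a_{\ell j}$ from the definition of matrix multiplication, and interpret each summand combinatorially: $(A^{k-1})_{i\ell}$ counts walks of length $k-1$ from $u_i$ to $u_\ell$ by the inductive hypothesis, and $a_{\ell j}$ records whether the final edge $u_\ell u_j$ is present. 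Since every walk of length $k$ from $u_i$ to $u_j$ decomposes uniquely as a walk of length $k-1$ from $u_i$ to some intermediate vertex $u_\ell$ followed by the edge $u_\ell u_j$, the sum counts exactly the walks of length $k$ from $u_i$ to $u_j$.

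Second, I would specialize to the diagonal: $(A^k)_{ii}$ counts walks of length $k$ from $u_i$ back to $u_i$, which are precisely the closed walks of length $k$ beginning (and ending) at $u_i$. Summing over all vertices gives
\[
\Tr A^k = \sum_{i=1}^{n} (A^k)_{ii} = \sum_{i=1}^{n} \#\{\text{closed walks of length } k \text{ at } u_i\} = \mathcal{W}_k(G),
\]
where the last equality holds because every closed walk of length $k$ has a well-defined starting vertex, so the vertex-indexed sum partitions the set of all closed walks of length $k$ without overcounting.

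The argument is essentially routine, so there is no serious obstacle; the only point requiring mild care is the convention for counting closed walks. I would note explicitly that $\mathcal{W}_k(G)$ counts closed walks together with a distinguished base vertex (equivalently, each cyclic rotation is counted as a distinct walk), which is consistent with the excerpt's stated values $\mathcal{W}_2(G)=2m(G)$ and $\mathcal{W}_3(G)=6t(G)$: each edge traversed back and forth gives two closed walks of length two (one from each endpoint), and each triangle yields six closed walks of length three (three choices of base vertex times two orientations). This matches $\sum_i (A^2)_{ii} = \Tr A^2 = \sum_{i,j} a_{ij}^2 = 2m$ and the corresponding count for $\Tr A^3$, confirming that the trace convention and the closed-walk convention agree.
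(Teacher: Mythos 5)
Your proof is correct and is the standard argument for this classical fact; the paper itself offers no proof, citing it directly as Theorem 1.9 of Cvetkovi\'c--Doob--Sachs, so there is nothing to compare against. Your explicit check that the base-vertex counting convention for $\mathcal{W}_k$ matches the paper's stated values $\mathcal{W}_2(G)=2m(G)$ and $\mathcal{W}_3(G)=6t(G)$ is a worthwhile addition, since that convention is exactly what the later lemmas in Section 3 rely on.
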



The complete, star and cycle graphs on $n$ vertices are denoted by  $K_n$, $S_n$ and $C_n$, respectively. Suppose $V(S_5)=\{v_1,v_2,v_3,v_4,v_5\}$ and  $E(S_5)=\{v_1v_2,v_1v_3,v_1v_4,v_1v_5\}$. The graph $S_5^{2e}$ is constructed from the graph $S_5$ by adding two edges $v_2v_3$ and  $v_4v_5$.

\begin{lemma}\label{ttlm0}
Let  $G$ be a graph. The following hold:
 \begin{enumerate}
 \item $|\mathcal{S}_{P_3}(G)|=m(L_1(G))$.
 \item $M_1^2(G)=2\Big(m(G)+m(L_1(G))  \Big)$.
 \item $M_1^3(G)=2\big[m(G)+3m(L_1(G))+3t(L_1(G))-3t(G)\big]$.
 \end{enumerate}
 \end{lemma}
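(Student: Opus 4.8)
The plan is to establish each of the three identities by relating counting problems on $G$ to structural features of $G$, $L_1(G)$, and their subgraph counts, working purely combinatorially from the definitions.

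For part (1), I would observe that an element of $\mathcal{S}_{P_3}(G)$ is a path $uvw$ of length two, which is uniquely determined by its central vertex $v$ together with an unordered pair of distinct edges $\{uv,vw\}$ incident to $v$. Each such pair is precisely a pair of adjacent edges in $G$, i.e. an edge of the line graph $L_1(G)$. Conversely every edge of $L_1(G)$ corresponds to two edges of $G$ sharing a vertex, which is exactly a $P_3$ subgraph. This bijection gives $|\mathcal{S}_{P_3}(G)|=m(L_1(G))$ directly.

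For part (2), I would start from the standard local counting identity $\sum_{v\in V}\binom{\deg_G(v)}{2}=|\mathcal{S}_{P_3}(G)|=m(L_1(G))$, since the number of $P_3$'s centered at $v$ is $\binom{\deg_G(v)}{2}$. Expanding, $\sum_v \deg_G(v)^2 - \sum_v \deg_G(v) = 2m(L_1(G))$, i.e. $M_1^2(G)-2m(G)=2m(L_1(G))$, which rearranges to the claimed $M_1^2(G)=2\bigl(m(G)+m(L_1(G))\bigr)$. For part (3), the natural route is to expand $\deg_G(v)^3$ combinatorially: using $\deg_G(v)^3 = \deg_G(v) + 3\deg_G(v)(\deg_G(v)-1) + \deg_G(v)(\deg_G(v)-1)(\deg_G(v)-2)$ via the identity $x^3 = x + 3\cdot 2\binom{x}{2} + 6\binom{x}{3}$, I would interpret $\binom{\deg_G(v)}{2}$ as $P_3$'s centered at $v$ and $\binom{\deg_G(v)}{3}$ as triples of edges at $v$. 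Summing over $v$ gives $M_1^3(G)=2m(G)+6m(L_1(G))+6\sum_v\binom{\deg_G(v)}{3}$, so the remaining task is to show $\sum_v\binom{\deg_G(v)}{3}=t(L_1(G))-t(G)$.

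The main obstacle is this last identity relating triangle counts in $L_1(G)$ to triangle counts in $G$. A triangle in $L_1(G)$ is a set of three mutually adjacent edges of $G$; these arise in exactly two ways: either three edges sharing a common vertex (a \emph{star} $K_{1,3}$-center, counted by $\binom{\deg_G(v)}{3}$) or three edges forming a triangle in $G$ itself. Thus $t(L_1(G))=\sum_v\binom{\deg_G(v)}{3}+t(G)$, which rearranges to exactly what is needed. I would therefore carefully verify that these two cases are exhaustive and disjoint, the key point being that three pairwise-adjacent edges in $G$ either all meet at one vertex or form a $3$-cycle, with no other configuration possible in a simple graph. Substituting this into the expansion then yields $M_1^3(G)=2\bigl[m(G)+3m(L_1(G))+3t(L_1(G))-3t(G)\bigr]$, completing the proof.
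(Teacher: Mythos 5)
Your proposal is correct and follows essentially the same route as the paper: the same bijection between $P_3$'s in $G$ and edges of $L_1(G)$ for (1), the same count $\sum_v\binom{\deg_G(v)}{2}$ for (2), and the same key classification for (3), namely that a triangle in $L_1(G)$ comes either from three edges at a common vertex or from a triangle of $G$, giving $t(L_1(G))=\sum_v\binom{\deg_G(v)}{3}+t(G)$. The only cosmetic difference is that you expand $\deg_G(v)^3$ via binomial coefficients up front, while the paper first expresses $\sum_v\binom{\deg_G(v)}{3}$ in terms of $M_1^3$, $M_1^2$ and $m$ and then substitutes part (2); the algebra is identical.
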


 \begin{proof}
\begin{enumerate}
\item Suppose that $e_1,e_2\in\,E(G)=V(L_1(G))$. By definition of line graph,  $e_1e_2\in\,E(L_1(G))$ if and only if $e_1$ and  $e_2$ have a common vertex. This proves that   $|\mathcal{S}_{P_3}(G)|=m(L_1(G))$.

\item  Choose the vertex  $v$ in $G$. The number of subgraphs of $G$ isomorphic to $P_3$ and middle vertex $v$ is equal to ${deg_G(v)\choose2}$. Hence $|\mathcal{S}_{P_3}(G)|$ $=$ $\sum_{v\in\,V(G)}{deg_G(v)\choose2}$ $=$ $\frac{1}{2} M_1^2(G)-m(G)$. By the case (1),  $M_1^2(G)=2(m(G)+m(L_1(G)))$, as desired.

 \item Suppose that $e_1,e_2,e_3\in\,E(G)=V(L_1(G))$. By definition, $L_1(G)[\{e_1,e_2,e_3\}]\cong\,C_3$ if and only if  $e_1$, $e_2$ and $e_3$ construct a cycle of length 3 or the star graph $S_4$. Therefore, $t(L_1(G))$ $=$ $\sum_{v\in\,V(G)}{deg_G(v)\choose3}+t(G)$ $=$ $\frac{1}{6}(M_1^3(G)-3M_1^2(G)+4m)+t(G)$. We now apply Lemma \ref{ttlm0}(2), to show that  $ t(L_1(G))=\frac{1}{6}(M_1^3(G)-2m(G)-6m(L_1(G))) + t(G)$.
\end{enumerate}
Hence the result.
 \end{proof}

Let $C_k:v_1v_2\ldots\,v_kv_1$ be the cycle graph on $k$ vertices. The graph $C_k[1^{l_1},2^{l_2},\ldots,k^{l_k}]$ is constructed from $C_k$ by adding $l_i$ pendant edges, $1\leq\,i\leq\,k$,  to the vertex $v_i$. For simplicity, if $l_i=0$, for some $i$, then we omit $i^{0}$ in our notation.

 \begin{lemma}\label{ttlm00}
Let  $G$ be a forest with $m(G)$ edges. Then the following hold:
{ \footnotesize \begingroup\allowdisplaybreaks\begin{align*}
 (i)\,M_1^4(G)&= \mathcal{W}_4(L_1(G))+2m(G)+12m(L_1(G))+36t(L_1(G)) -4m(L_2(G)),\\
 (ii)\,M_1^5(G)&= \mathcal{W}_5(L_1(G))+5M_1^4(G)-5M_1^3(G)-15M_1^2(G) +12m(G)-5\alpha_{1,2}(G)+30M_2^1(G),\\
 (iii)\,M_1^6(G)&=\mathcal{W}_6(L_1(G))-56m(L_1(G))+6M_1^5(G)-6\alpha_{1,3}(G) -6M_2^2(G)-60m(G) -9M_1^3(G)\\
 &-9M_1^4(G)+61M_1^2(G)-102M_2^1(G) -12m(L_2(G)) +42\alpha_{1,2}(G) -6\Theta_2(G)-6t(L_1(G)).
 \end{align*}\endgroup}
 \end{lemma}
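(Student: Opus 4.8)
The plan is to compute each $\mathcal{W}_k(L_1(G))$ for $k\in\{4,5,6\}$ as a trace and then solve the resulting identity for $M_1^k(G)$. By Theorem \ref{ttth0}, $\mathcal{W}_k(L_1(G))=\Tr\big(A(L_1(G))^k\big)$, so I would turn this into a statement about $G$ itself through the incidence matrix. If $B$ is the $n\times m$ (unsigned) vertex--edge incidence matrix of $G$, then $B^{\top}B=2I_m+A(L_1(G))$ and $BB^{\top}=D(G)+A(G)=:Q(G)$, the signless Laplacian. Since $\Tr\big((B^{\top}B)^i\big)=\Tr\big((BB^{\top})^i\big)=\Tr(Q^i)$ for every $i\ge 1$ while $\Tr\big((B^{\top}B)^0\big)=m$, expanding $A(L_1(G))^k=(B^{\top}B-2I_m)^k$ binomially gives the identity
\begin{equation*}
\mathcal{W}_k(L_1(G))=(-2)^k m+\sum_{i=1}^{k}\binom{k}{i}(-2)^{k-i}\,\Tr\big(Q(G)^i\big),
\end{equation*}
valid for any simple graph. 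Everything on the right is now a power-trace of $Q=D+A$, which I will expand combinatorially.

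Next I would expand $\Tr(Q^i)=\Tr\big((D+A)^i\big)$ as a sum over the $2^i$ words in the alphabet $\{D,A\}$, each word contributing the trace of the corresponding product. Two forest-specific facts do the heavy lifting. First, $G$ is bipartite, so any closed ``lazy walk'' must take an \emph{even} number of genuine $A$-steps; hence every word with an odd number of $A$'s has vanishing trace. Second, the pure pieces $\Tr(A^i)=\mathcal{W}_i(G)$ count backtracking walks only (no cycles), so they collapse to degree sums. A short calculation then gives, for a forest, $\Tr Q=2m$, $\Tr Q^2=M_1^2+2m$, $\Tr Q^3=M_1^3+3M_1^2$, $\Tr Q^4=M_1^4+4M_1^3+2M_1^2-2m+4M_2^1$, and $\Tr Q^5=M_1^5+5M_1^4+5M_1^3-5M_1^2+10M_2^1+5\alpha_{1,2}$, with an analogous but longer expression for $\Tr Q^6$; the all-$D$ word always supplies the leading term $M_1^i$. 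The new invariants $M_2^2$, $\alpha_{1,3}$ and $\Theta_2$ enter only at $i=6$: from the separated two-$A$ words with wide gaps (these produce $M_2^2$ and $\alpha_{1,3}$) and from the four-$A$ words, whose forest evaluation involves the second-neighbour sum $\sum_{uvw\in\mathcal S_{P_3}(G)}\deg_G(u)\deg_G(w)=\Theta_2(G)$.

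Substituting into the displayed identity and isolating the leading $M_1^k$ term yields each formula after routine collection. For $k=5$, for instance, one obtains $\mathcal{W}_5(L_1(G))=M_1^5-5M_1^4+5M_1^3+15M_1^2-12m-30M_2^1+5\alpha_{1,2}$, which rearranges exactly to statement (ii); the case $k=4$ gives $\mathcal{W}_4(L_1(G))=M_1^4-4M_1^3+2M_1^2-2m+4M_2^1$. To reach the \emph{stated} shapes I would trade the residual degree sums back for $m(L_1(G))$, $m(L_2(G))$, $t(L_1(G))$ and $\Theta_2(G)$ using Lemma \ref{ttlm0} (namely $\sum_v\binom{\deg_G v}{2}=m(L_1(G))$, $\sum_v\binom{\deg_G v}{3}=t(L_1(G))$, and $P_2(L_1(G))=m(L_2(G))$). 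This re-expression is exactly what converts the degree-polynomial corrections into the $12m(L_1)+36t(L_1)-4m(L_2)$ packaging of part (i) and the $-56m(L_1)-12m(L_2)-6\Theta_2-6t(L_1)$ packaging of part (iii).

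I expect the main obstacle to be the case $k=6$. There $\Tr Q^6$ has $64$ words, and although the $32$ odd-$A$ words vanish, the four-$A$ and six-$A$ words encode length-$4$ and length-$6$ walks in $G$ whose forest structure must be resolved into contributions of the paths $P_3,P_4$ and of stars; it is precisely here that $\Theta_2$, $m(L_2)$ and $t(L_1)$ surface with interlocking coefficients. Carrying every coefficient correctly through this collection---and checking the accompanying structural fact that the line graph of a forest contributes no genuine $4$-, $5$- or $6$-cycles beyond those forced by the cliques $K_{\deg_G v}$ seated on each vertex $v$, so that all ``cyclic'' contributions reduce to sums of $\binom{\deg_G v}{j}$---is the delicate part of the argument.
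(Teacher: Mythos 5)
Your proposal is correct where it is carried out, but it takes a genuinely different route from the paper. The paper never touches the incidence matrix: it first derives, for an arbitrary graph $H$, explicit decompositions of $\mathcal{W}_4(H)$, $\mathcal{W}_5(H)$ and $\mathcal{W}_6(H)$ into weighted counts of subgraphs ($P_3$, $P_4$, $S_4$, $C_3$, $C_4$, $K_4-e$, $C_3[1^1]$, $C_4[1^1]$, $C_5$, $C_6$, $S_5^{2e}$, \dots), and then evaluates each of those counts in $H=L_1(G)$ by exploiting the fact that, for a forest $G$, every cycle of $L_1(G)$ lives inside a clique $K_{\deg_G(v)}$, so each count collapses to sums of $\binom{\deg_G v}{j}$ and edge-pair sums. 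Your route instead writes $A(L_1(G))=B^{\top}B-2I_m$ and reduces everything to $\Tr\bigl(Q(G)^i\bigr)$ with $Q=D+A$, expanded word-by-word; I checked that your intermediate values $\Tr Q^2=M_1^2+2m$, $\Tr Q^3=M_1^3+3M_1^2$, $\Tr Q^4=M_1^4+4M_1^3+4M_2^1+2M_1^2-2m$, $\Tr Q^5=M_1^5+5M_1^4+5M_1^3-5M_1^2+10M_2^1+5\alpha_{1,2}$ are right, and that the resulting expressions for $\mathcal{W}_4(L_1(G))$ and $\mathcal{W}_5(L_1(G))$ reproduce parts (i) and (ii) exactly after converting back via Lemma \ref{ttlm0}. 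What your approach buys is that the error-prone step of enumerating closed-walk shapes in $L_1(G)$ (the paper's Equation \eqref{tteq32} with ten subgraph types) is replaced by a mechanical binomial expansion valid for any graph, with the forest hypothesis entering only through bipartiteness (killing odd-$A$ words) and the collapse of entries like $(A^4)_{vv}$ to degree data; what the paper's approach buys is reusable general-graph walk formulas and the structural picture of $L_1(G)$. Two caveats: the $k=6$ case, where $\Theta_2$, $M_2^2$ and $\alpha_{1,3}$ first appear, is only sketched in your writeup and is exactly where the bookkeeping risk sits in either method; and the ``structural fact'' about $4$-, $5$- and $6$-cycles of $L_1(G)$ that you flag at the end is needed only for the paper's argument, not for yours, so you should either drop it or recognize that it belongs to the alternative proof.
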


\begin{proof}
Let  $H$ be an arbitrary graph.
\begin{enumerate}
\item[$(i)$]  It can be easily seen that
 \begin{align}
 \mathcal{W}_4(H)=2m(H)+4|\mathcal{S}_{P_3}(H)| +8|\mathcal{S}_{C_4}(H)|.\label{tteq3}
 \end{align}

Since $G$ is a forest, $|\mathcal{S}_{C_4}(L_1(G))|=3|\mathcal{S}_{K_4}(L_1(G))|=3\sum_{v\in\,V(G)}{deg_G(v)\choose4}$ $=$ $\frac{3}{24}(M_1^4(G)$ $-$ $6M_1^3(G)+11M_1^2(G)-12m(G))$, and by Lemma  \ref{ttlm0}(2,3),
 \begin{align}
 |\mathcal{S}_{C_4}(L_1(G))|&=\frac{3}{24}\left(M_1^4(G)-2m(G)-14m(L_1(G))-36t(L_1(G))\right).\label{tteq4}
 \end{align}

Also, by Lemma  \ref{ttlm0} (1),   $ |\mathcal{S}_{P_3}(L_1(G))|=m(L_2(G))$. We now apply Equations \eqref{tteq3} and  \eqref{tteq4} to deduce that $M_1^4(G)$ $=$ $\mathcal{W}_4(L_1(G))$ $+$ $2m(G)$ $+$ $12m(L_1(G))$ $+$ $36t(L_1(G))-4m(L_2(G))$, as desired.

\item[$(ii)$] By an easy calculation, one can see that
\begin{align}
\mathcal{W}_5(H)=30t(H)+10 |\mathcal{S}_{C_5}(H)|+10 |\mathcal{S}_{C_3[1^1]}(L_1(G))|\label{tteq31}
\end{align}

Since $G$ is a forest, {\small$|\mathcal{S}_{C_5}(L_1(G))|$ $=$ $12|\mathcal{S}_{K_5}(L_1(G))|$ $=$ $12\sum_{v\in\,V(G)}{deg_G(v)\choose5}$ $=$ $\frac{12}{120}(M_1^5(G)$ $-$ $10M_1^4(G)+35M_1^3(G)$}
 $-50M_1^2(G)+48m(G))$ and $|\mathcal{S}_{C_3[1^1]}(L_1(G))|$ $=$ $\sum_{uv\in E(G)} \Big[$ ${deg_G(u)-1\choose2}$ $(deg_G(u)+deg_G(v)-4)$
 $+$ ${deg_G(v)-1\choose2}$ $(deg_G(u)+deg_G(v)-4)\Big]$
$=\frac{1}{2}M_1^4(G)+\frac{1}{2}\alpha_{1,2}(G)-\frac{7}{2}M_1^3(G)-3M_2^1(G)$
 $+8M_1^2(G)-8m(G)$.
Now the result follows from Equation  \eqref{tteq31}.

\item[$(iii)$] By some easy calculations, one can see that
 \begingroup\allowdisplaybreaks \begin{align}
 \mathcal{W}_6(H)&=2m(H)+12|\mathcal{S}_{P_3}(H)|+6|\mathcal{S}_{P_4}(H)|+12|\mathcal{S}_{S_4}(H)|+24t(H)+ 48|\mathcal{S}_{C_4}(H)| \nonumber\\
 &+36|\mathcal{S}_{K_4-e}(H)|+12 |\mathcal{S}_{C_4[1^1]}(H)|+12|\mathcal{S}_{C_6}(H)|+24|\mathcal{S}_{S_4^{2e}}(H)|.
 \label{tteq32}
 \end{align}\endgroup

We now assume that $x=uvw\in\,\mathcal{S}_{P_3}(G)=E(L_1(G))$. Then the number of paths constructed from three edges in $L_1(G)$ with  $x$ as its middle edge can be computed by  $(deg_G(u)+deg_G(v)-3)(deg_G(v)+deg_G(w)-3)-t_{x}(L_1(G)),$ where $ t_x(L_1(G))$ denotes the number of triangles constructed on the edge $x$ of $L_1(G)$. Therefore, $|\mathcal{S}_{P_4}(L_1(G))|$ $=$ $\sum_{uvw\in\mathcal{S}_{P_3}(G)}[(deg_G(u)+deg_G(v)-3)(deg_G(v)+deg_G(w)-3)-t_{x}(L_1(G))]$ $=$  $\sum_{uv\in\,E(G)}deg_G(u)deg_G(v)(deg_G(u)+deg_G(v)-2)$ $-$ $3\sum_{uv\in\,E(G)}[deg_G(u)(deg_G(v)-1)+(deg_G(u)-1)deg_G(v)]$ $+$ $\Theta_2(G)$ $-$ $3t(L_1(G))$ $+$ $9m(L_1(G))$ $+$ $\sum_{v\in\,V(G)}{deg_G(v)\choose2}$ $(deg_G(v)^2-6deg_G(v))$. Now we simplify the last summation to deduce that
\begingroup\allowdisplaybreaks \begin{align}
|\mathcal{S}_{P_4}(L_1(G))|&=\Theta_2(G)-3t(L_1(G))+9m(L_1(G))+\alpha_{1,2}(G)-8M_2^1(G)\nonumber\\
&+6M_1^2(G)+\frac{1}{2}M_1^4(G)-\frac{7}{2}M_1^3(G).\label{tttteq0}
 \end{align}\endgroup

Suppose that $e=uv\in\,E(G)=V(L_1(G))$. The number of stars isomorphic to $S_4$ in  $L_1(G)$ with  $e$ as its center is computed by ${deg_G(u)+deg_G(v)-2\choose3}$ and so
{\small \begin{align}
 |\mathcal{S}_{S_4}(L_1(G))|&=\frac{1}{6}M_1^4(G)+\frac{1}{2}\alpha_{1,2}(G)-\frac{3}{2}M_1^3(G)-3M_2^1(G)+\frac{13}{3}M_1^2(G)-4m(G).\label{tttteq1}
 \end{align}}
By the proof of Case (1), we have
\begingroup\allowdisplaybreaks \begin{align}
 |\mathcal{S}_{C_4}(L_1(G))|&=\frac{3}{24}\Big(M_1^4(G)-6M_1^3(G)+11M_1^2(G) -12m(G)\Big).\label{tttteq2}
 \end{align}\endgroup

Furthermore, it can be seen that
 \begin{align}
 |\mathcal{S}_{K_4-e}(L_1(G))|=2|\mathcal{S}_{C_4}(L_1(G))|.\label{tttteq3}
 \end{align}

On the other hand, by definition of complete graphs,
 \begin{align}
 |\mathcal{S}_{C_n}(K_n)|=\frac{1}{2}(n-1)!.\label{tttteq4}
 \end{align}

Note that four edges in $G$ gives an induced subgraph of  $L_1(G)$ isomorphic to $K_4$ if and only if those edges has a common vertex. Thus,
\begingroup\allowdisplaybreaks  \begin{align}
 |\mathcal{S}_{C_4[1^1]}(L_1(G))|&=3\sum_{e=uv\in\,E(G)}\Big[{deg_G(u)-1\choose3}(deg_G(u)+deg_G(v)-5)\nonumber\\
 &+{deg_G(v)-1\choose3}(deg_G(u)+deg_G(v)-5)\Big]\\
 &=\frac{1}{2}M_1^5(G)+\frac{1}{2}\alpha_{1,3}(G)-\frac{11}{2}M_1^4(G)-3\alpha_{1,2}(G)\nonumber\\
 &+\frac{41}{2}M_1^3(G) -\frac{67}{2}M_1^2(G)+11M_2^1(G)+30m(G).\label{tttteq5}
 \end{align}\endgroup

Furthermore, six edges of $G$ gives a cycle in $L_1(G)$ if and only if those edges have a common vertex. We now apply Equation \eqref{tttteq4} to deduce that
\begingroup\allowdisplaybreaks \begin{align}
 |\mathcal{S}_{C_6}(L_1(G))|&=60|\mathcal{S}_{K_6}(L_1(G))|=60\sum_{v\in\,V(G)}{deg_G(v)\choose6}\nonumber\\
 &=\frac{1}{12}M_1^6(G)-\frac{5}{4}M_1^5(G)+\frac{85}{12}M_1^4(G)-\frac{75}{4}M_1^3(G)\nonumber\\
 &+\frac{137}{6}M_1^2(G)-20m(G).\label{tttteq6}
 \end{align}\endgroup

Suppose $f=uv\in\,E(G)=V(L_1(G))$. The number of subgraphs of  $L_1(G)$ isomorphic to  $S_5^{2e}$ with the property that  $f$ is a vertex of degree $4$ can be obtained from  ${deg_G(u)-1\choose2}{deg_G(v)-1\choose2}+3{deg_G(u)-1\choose4}+3{deg_G(v)-1\choose4}$. Therefore,
  \begingroup\allowdisplaybreaks\begin{align}
 |\mathcal{S}_{S_5^{2e}}(L_1(G))|&=\frac{1}{4}M_2^2(G)-\frac{3}{4}\alpha_{1,2}(G)+\frac{39}{8}M_1^3(G) +\frac{9}{4}M_2^1(G)\nonumber\\
 &-\frac{31}{4}M_1^2(G)+7m(G)+\frac{1}{8}M_1^5(G)-\frac{5}{4}M_1^4(G).\label{tttteq7}
 \end{align}\endgroup

We now apply Lemma \ref{ttlm0} and Equations  \eqref{tteq32}, \eqref{tttteq0}, \eqref{tttteq1}, \eqref{tttteq3}, \eqref{tttteq4}, \eqref{tttteq5}, \eqref{tttteq6} and \eqref{tttteq7} to complete the proof of this case.
\end{enumerate}
Hence the result.
 \end{proof}

\begin{lemma}\label{ttlm4}
Suppose that $G$ is a graph. Then the following equalities hold:
 \begingroup\allowdisplaybreaks\begin{align*}
 M_2^1(G)&=\frac{1}{2} M_1^2(L_1(G))-\frac{1}{2} M_1^3(G)+2M_1^2(G)-2m(G),\\
EM_2(G)&=\frac{1}{2} M_1^2(L_2(G))-\frac{1}{2} M_1^3(L_1(G))+2M_1^2(L_1(G))-2m(L_1(G)),\\
\alpha_{1,2}(G)&=\frac{1}{3}M_1^3(L_1(G))-\frac{1}{3}M_1^4(G)+2M_1^3(G)+4M_2^1(G) -4M_1^2(G)+\frac{8}{3}m(G),\\
\Theta_2(G)&=\frac{1}{2}M_1^2(L_2(G))-\frac{5}{6}M_1^3(L_1(G))+2M_1^2(L_1(G))-2m(L_1(G))\\
&-\frac{1}{6}M_1^4(G)+\frac{1}{2}M_1^3(G)+2M_2^1(G)-2M_1^2(G)+\frac{4}{3}m(G).
\end{align*}\endgroup
\end{lemma}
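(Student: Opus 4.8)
The plan is to derive the four identities in a cascade, with the only genuinely new ingredient being the edge-degree formula $deg_G(e)=deg_G(u)+deg_G(v)-2$ recorded in Section~1, which lets me rewrite any first-Zagreb-type invariant of $L_1(G)$ as an edge-sum over $G$. For the first equality I would use that the vertices of $L_1(G)$ are the edges of $G$ and that $deg_{L_1(G)}(e)=deg_G(e)$ for $e=uv$, so that
\[
M_1^2(L_1(G))=\sum_{uv\in E(G)}\big(deg_G(u)+deg_G(v)-2\big)^2.
\]
Expanding the square and summing term by term, I would invoke the elementary edge-sum identities $\sum_{uv\in E}[deg_G(u)^2+deg_G(v)^2]=M_1^3(G)$ and $\sum_{uv\in E}[deg_G(u)+deg_G(v)]=M_1^2(G)$, together with the definitions of $M_2^1$ and $m$, to obtain $M_1^2(L_1(G))=M_1^3(G)+2M_2^1(G)-4M_1^2(G)+4m(G)$; solving for $M_2^1(G)$ gives the first formula.

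The third equality follows by the identical device applied to the cube: expanding $M_1^3(L_1(G))=\sum_{uv\in E}(deg_G(u)+deg_G(v)-2)^3$ isolates $M_1^4(G)$ and $3\alpha_{1,2}(G)$ against lower-order terms, and solving for $\alpha_{1,2}(G)$ yields the claim. For the second equality I would instead observe that two edges of $G$ are adjacent exactly when their corresponding vertices of $L_1(G)$ are adjacent, and that $deg_G(e)=deg_{L_1(G)}(e)$; hence $EM_2(G)=M_2^1(L_1(G))$. Applying the already-proved first equality with $G$ replaced by $L_1(G)$, and using $L_1(L_1(G))=L_2(G)$, then gives the second formula at once.

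Finally, the fourth equality is the only part that steps outside these pure line-graph expansions. Here I would invoke Lemma~\ref{6lmm13}, which expresses $EM_2(G)$ through $\Theta_2(G)$ and degree-based invariants, solve it for $\Theta_2(G)$, and then substitute the formulas just obtained for $EM_2(G)$ and $\alpha_{1,2}(G)$. Collecting coefficients produces the weights $\tfrac12,-\tfrac56,2,-2,-\tfrac16,\tfrac12,2,-2,\tfrac43$ attached respectively to $M_1^2(L_2(G))$, $M_1^3(L_1(G))$, $M_1^2(L_1(G))$, $m(L_1(G))$, $M_1^4(G)$, $M_1^3(G)$, $M_2^1(G)$, $M_1^2(G)$, $m(G)$. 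I expect the main obstacle to be purely the bookkeeping in this last step: merging three separate expressions while keeping the fractional coefficients consistent and ensuring the $\Theta_2$ contribution coming from Lemma~\ref{6lmm13} cancels exactly rather than being double-counted. The first three parts are, by contrast, routine polynomial expansions once the edge-degree substitution is in hand.
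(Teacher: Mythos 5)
Your proposal is correct and follows essentially the same route as the paper: expanding $M_1^2(L_1(G))$ and $M_1^3(L_1(G))$ as edge-sums of powers of $deg_G(u)+deg_G(v)-2$, obtaining the second identity from the first via $EM_2(G)=M_2^1(L_1(G))$ applied to $L_1(G)$, and deriving the fourth by solving Lemma~\ref{6lmm13} for $\Theta_2(G)$ and substituting the earlier three. The final bookkeeping is exactly as you anticipate and the stated coefficients check out.
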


\begin{proof}
Suppose that  $e=uv\in\,E(G)$. By definition of line graph, $deg_{L_1(G)}(e)$ $=$ $deg_G(u)$ $+$ $deg_G(v)$ $-$ $2$. Hence $M_1^2(L_1(G))$ $=$ $\sum_{uv\in\,E(G)}(deg_G(u)+deg_G(v)-2)^2$ $=$ $M_1^3(G)+2M_2^1(G)-4M_1^2(G)+4m(G)$ which completes the proof of the first equality. The second equality follows from $EM_2(G)=M_2(L_1(G))$ and the first equality.

Next, we prove the third equality. We have  $M_1^3(L_1(G))$ $=$ $\sum_{uv\in\,E(G)}(deg_G(u)+deg_G(v)-2)^3$ $=$ $3\alpha_{1,2}(G)+M_1^4(G)-6M_1^3(G)-12M_2^1(G)$ $+$ $12M_1^2(G)-8m(G)$, as desired. Finally, by Lemma \ref{6lmm13},
$\Theta_2(G)$ $=$ $EM_2(G)-\alpha_{1,2}+6M_2^1(G)-\frac{1}{2}M_1^4(G)+\frac{5}{2}M_1^3(G)$ $-6M_1^2(G)+4m(G)$. Now, the fourth equality follows from the above three equalities.
\end{proof}

We are now ready to prove the main result of this section.

\begin{theorem}
Let $G$ be a graph, $A=A(G)$ and  $A_1=A(L_1(G))$. Then the following hold:

 \begin{enumerate}
\item $c_{n-1}(G)=\Tr A^2$ and $c_{n-2}(G) = \frac{1}{2}\Big[\prod_{i=0}^1\big(\Tr A^2-2i)-\Tr A_1^2\Big]$.

\item  $c_{n-3}(G)$ $=$ $\frac{1}{3!}\Big[\prod_{i=0}^2\big(\Tr A^2-2i)-3\Tr A^2\Tr A_1^2+\Tr(12A_1^2+2A_1^3)-4\Tr A^3 \Big]$.

\item $c_{n-4}(G)$ $=$ $\frac{1}{4!}\Big[\prod_{i=0}^3\big(\Tr A^2-2i)-6(\Tr A^2)^2\Tr A_1^2
 +\Tr A^2\Tr(60A_1^2+8A_1^3) -\Tr(144A_1^2+48A_1^3+6A_1^4)+3(\Tr A_1^2)^2 \Big]$, when $G$ is a forest.

\item $c_{n-5}(G)$ $=$ $\frac{1}{5!}\Big[\prod_{i=0}^4\big(\Tr A^2-2i)-10(\Tr A^2)^3\Tr A_1^2
 +(\Tr A^2)^2\Tr(180A_1^2+20A_1^3)-\Tr A^2\Tr(1040A_1^2+280A_1^3+30A_1^4)+15\Tr A^2(\Tr A_1^2)^2-\Tr A_1^2\Tr(120A_1^2+20A_1^3)+\Tr(1920A_1^2$ $+$ $960A_1^3+240A_1^4+24A_1^5)\Big]$, when $G$ is a forest.

\item $c_{n-6}(G)$ $=$ $\frac{1}{6!}\Big[\prod_{i=0}^5\big(\Tr A^2-2i)-15(\Tr A^2)^4\Tr A_1^2
 +(\Tr A^2)^3\Tr(420A_1^2+40A_1^3)$ $-$ $(\Tr A^2)^2\Tr(4260A_1^2$ $+$ $960A_1^3$ $+$ $90A_1^4)$ $+$ $45(\Tr A^2\Tr A_1^2)^2$ $-$ $810\Tr A^2(\Tr A_1^2)^2$ $+$ $\Tr A^2\Tr$ $(18480A_1^2$ $+$ $7520A_1^3$ $+$ $1620A_1^4$ $+$ $144A_1^5)$ $-$ $15(\Tr A_1^2)^3$ $+$ $3600(\Tr A_1^2)^2$ $-$ $28800\Tr A_1^2$ $+$ $\Tr A_1^2\Tr(1200A_1^3$ $+$ $90A_1^4)$ $+$ $40(\Tr A_1^3)^2$ $-$ $120\Tr A^2\Tr A_1^2\Tr A_1^3$ $-$ $\Tr(19200A_1^3$ $-$ $7200A_1^4$ $-$ $1440A_1^5$ $-$ $120A_1^6) \Big]$, when $G$ is a forest.
\end{enumerate}
\end{theorem}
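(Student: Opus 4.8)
The plan is to start from the fact that every coefficient in the list is already known in \emph{degree-based} form and simply translate each degree invariant into traces of powers of $A$ and $A_1$. For $k\le 3$ I would use Lemma~\ref{l1}(1),(4); for $k=4,5$ the two representations \eqref{oeq1}--\eqref{oeq2}; and for $k=6$ the formula of Theorem~\ref{66th3}. The conversion engine is Theorem~\ref{ttth0}, which turns closed-walk counts into traces ($\Tr A^{j}=\mathcal W_j(G)$, $\Tr A_1^{j}=\mathcal W_j(L_1(G))$), together with the combinatorial identities of Lemmas~\ref{ttlm0}, \ref{ttlm00} and \ref{ttlm4}. Because a forest is triangle-free, $\Tr A^{3}=6t(G)=0$, which is exactly why no $\Tr A^{3}$ survives in parts (3)--(6).

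Concretely I would first assemble the dictionary. From $\mathcal W_2=2m$ and $\mathcal W_3=6t$ we get $m=\tfrac12\Tr A^{2}$, $m(L_1(G))=\tfrac12\Tr A_1^{2}$ and $t(L_1(G))=\tfrac16\Tr A_1^{3}$. Lemma~\ref{ttlm0}(2),(3) then yield $M_1^{2}=\Tr A^{2}+\Tr A_1^{2}$ and, using $t(G)=0$, $M_1^{3}=\Tr A^{2}+3\Tr A_1^{2}+\Tr A_1^{3}$. Lemma~\ref{ttlm00} supplies $M_1^{4},M_1^{5},M_1^{6}$ in terms of $\Tr A_1^{4},\Tr A_1^{5},\Tr A_1^{6}$ and lower data, and Lemma~\ref{ttlm4} supplies $M_2^{1}$, $EM_2$, $\alpha_{1,2}$ and $\Theta_2$; the leftover invariants $\alpha_{1,3}$ and $M_2^{2}$ are handled because they enter $c_{n-6}$ only through the same combination in which they sit inside $M_1^{6}$ via Lemma~\ref{ttlm00}(iii). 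Substituting all of this into the degree-based formulas and collecting powers of $\Tr A^{2}$ against $\Tr A_1^{2},\dots,\Tr A_1^{6}$ should reproduce the stated expressions; as a check, $c_{n-1}=2m=\Tr A^{2}$ and $c_{n-2}=\tfrac12[(2m)^{2}-4m-\Tr A_1^{2}]=\tfrac12[\Tr A^{2}(\Tr A^{2}-2)-\Tr A_1^{2}]$ fall out at once.

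The hard part is that the dictionary does \emph{not} close at the level of $G$ and $L_1(G)$: Lemmas~\ref{ttlm00} and \ref{ttlm4} introduce auxiliary quantities such as $m(L_2(G))$, $M_1^{2}(L_1(G))$, $M_1^{3}(L_1(G))$ and $M_1^{2}(L_2(G))$, none of which may appear in the final answer. One must verify that in each fixed combination defining $c_{n-k}$ these auxiliary terms cancel identically. For $c_{n-4}$ this is already visible: the bracket of \eqref{oeq1} carries $-6M_1^{4}-24M_2^{1}$, and Lemma~\ref{ttlm00}(i) contributes $+24\,m(L_2(G))$ while Lemma~\ref{ttlm4} together with Lemma~\ref{ttlm0}(2) (giving $M_2^{1}=m(L_1(G))+m(L_2(G))-\tfrac12 M_1^{3}+2M_1^{2}-2m$) contributes $-24\,m(L_2(G))$, so the auxiliary term vanishes. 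Tracking all such cancellations through the roughly forty terms of Theorem~\ref{66th3}, and then recognising the survivors as the precise product-and-trace shape in the statement (e.g.\ $\prod_{i=0}^{k-1}(\Tr A^{2}-2i)$ for the leading block), is the genuine computational obstacle and is essentially where symbolic computation becomes unavoidable.

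Finally, I would both motivate why these cancellations are forced and independently verify the output by a spectral reading. Since $\psi(G,x)$ is the characteristic polynomial of $L(G)$, the coefficient $c_{n-k}(G)$ is the $k$-th elementary symmetric function $e_k$ of the Laplacian eigenvalues, and only the nonzero eigenvalues contribute. For a forest (hence bipartite) the signless Laplacian $Q=BB^{T}$ is cospectral with $L$, while $B^{T}B=A_1+2I$; thus the $m$ nonzero Laplacian eigenvalues are exactly $\{\,2+\nu_j\,\}_{j=1}^{m}$, where $\nu_1,\dots,\nu_m$ are the adjacency eigenvalues of $L_1(G)$. Therefore $c_{n-k}(G)=e_k(2+\nu_1,\dots,2+\nu_m)$, and Newton's identities express this through the power sums $\sum_j(2+\nu_j)^{\ell}=\sum_{i=0}^{\ell}\binom{\ell}{i}2^{\ell-i}\Tr A_1^{\,i}$ together with $m=\tfrac12\Tr A^{2}$. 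This is precisely the form claimed, which simultaneously fixes every coefficient (for instance the $\Tr A_1^{6}$ term must enter $6!\,c_{n-6}$ with coefficient $6!\cdot(-\tfrac16)=-120$) and explains structurally why nothing beyond $\Tr A^{2}$ and $\Tr A_1^{2},\dots,\Tr A_1^{6}$ can survive.
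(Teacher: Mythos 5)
Your primary route---substituting the walk/trace dictionary of Theorem \ref{ttth0} and Lemmas \ref{ttlm0}, \ref{ttlm00}, \ref{ttlm4} into the degree-based formulas of Lemma \ref{l1}, Equations \eqref{oeq1}--\eqref{oeq2} and the $c_{n-6}$ formula (Theorems \ref{66th1} and \ref{66th2})---is exactly the paper's proof, which is given only as a bare citation of those results; your explicit verification that the auxiliary quantities $m(L_2(G))$, $M_1^2(L_1(G))$, $M_1^3(L_1(G))$, $M_1^2(L_2(G))$ must cancel (e.g.\ the $+24\,m(L_2(G))$ coming from $-6M_1^4$ against the $-24\,m(L_2(G))$ coming from $-24M_2^1$ in $c_{n-4}$) supplies detail the paper omits. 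Your closing spectral argument is a genuinely different route that the paper does not take: for a forest $L$ is cospectral with $Q=BB^{T}$, whose nonzero eigenvalues are those of $B^{T}B=A_1+2I$, so $c_{n-k}(G)=e_k(2+\nu_1,\dots,2+\nu_m)$ and Newton's identities yield the trace formulas directly; this explains structurally both the leading block $\frac{1}{k!}\prod_{i=0}^{k-1}(\Tr A^2-2i)=2^k\binom{m}{k}$ and why nothing beyond $\Tr A^2$ and $\Tr A_1^2,\dots,\Tr A_1^k$ can survive, and it extends in principle to every $k$, at the cost of applying only to bipartite $G$ (the paper's route also covers parts (1)--(2) for arbitrary graphs). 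One concrete dividend of your cross-check: Newton's identities force the coefficient of $\Tr A_1^6$ in $6!\,c_{n-6}$ to be $-120$, which agrees with the $-120M_1^6$ term of Theorem \ref{66th1} via Lemma \ref{ttlm00}(iii), whereas the displayed final block $-\Tr(19200A_1^3-7200A_1^4-1440A_1^5-120A_1^6)$ read literally gives $+120\Tr A_1^6$; this points to a sign typo in the statement rather than to any gap in your argument.
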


\begin{proof} The proof follows from Theorems \ref{66th2}, \ref{ttth0} and Lemmas \ref{ttlm0},  \ref{ttlm00},
\ref{ttlm4}.
\end{proof}

\section{Applications}

This section aims is to apply our results in Section 3 for computing the Laplacian coefficients $c_{n-k}$, $k =2,3,4,5,6$, when $G$ is a certain tree.
Let $G$ be a graph. The number of edges connecting vertices of degree $i$ and $j$ in a graph $G$ is denoted by $m_{i,j}$. Let
$\mathcal{P}_3^{i,j}=|\{uv w \in \mathcal{S}_{P_3}(G)\mid\{deg_G(u),deg_G(w)\}=\{i,j\}\}|$.
 As \cite{2-2}, we first assume that $T (k, t)$ be a rooted tree with degree
sequence $k, k,\ldots, k, 1, 1,\ldots, 1$ and $t$ is the distance between the center and any pendant
vertex, Figure \ref{sh4}. By definition of $T (k, t)$ we have
$n\big(T (k, t)\big)$ $=$ $\frac{k(k-1)^t-2}{k-2}$,
$m_{1,k}\big(T (k, t)\big)$ $=$ $k(k-1)^{t-1}$, $m_{k,k}\big(T (k, t)\big)$ $=$ $n\big(T (k, t)\big)-1-m_{1,k}\big(T (k, t)\big)$,
$\mathcal{P}_3^{1,k}$ $=$ $m_{1,k}\big(T (k, t)\big)$, $\mathcal{P}_3^{1,1}$ $=$ $k(k-1)^{t-2}{k-1\choose 2}$,
$\mathcal{P}_3^{k,k}$ $=$ $\frac{1}{2}M_1\big(T (k, t)\big)-n\big(T (k, t)\big)+1-\mathcal{P}_3^{1,1}-\mathcal{P}_3^{1,k}$.
Therefore by Lemma \ref{l1}, Equations \eqref{oeq1}, \eqref{oeq2} and  Theorem \ref{66th3} and the others  simple calculations  we have:
 {\footnotesize \begingroup\allowdisplaybreaks\begin{align*}
&c_{n-1}(T(3,t))=6\times2^t-6,\\
&c_{n-2}(T(3,t))=18\times2^{2t}-\frac{93}{2}2^t+30,\\
&c_{n-3}(T(3,t)) =36\times2^{3t}-171\times2^{2t}
+272\times2^t-144,\\
&c_{n-4}(T(3,t))=54\times2^{4t}-405\times2^{3t}+\frac{9177}{8}2^{2t}-\frac{5799}{4}2^t+687 ,\\
&c_{n-5}(T(3,t))=\frac{324}{5}2^{5t}-702\times2^{4t}+\frac{12267}{4}2^{3t}-\frac{26967}{4}2^{2t}+\frac{74427}{10}2^t
-3294,\\
& c_{n-6}(T(3,t))={\frac {324\,}{5}{2}^{6\,t}}-{\frac {4779\,}{5}{2}^{5\,t}}+{\frac {23697\,}{4}{2}^{4\,t}}-{
\frac {315711\,}{16}{2}^{3\,t}}+{\frac {1488293\,}{40}{2}^{2\,t}}-{\frac {376247\,}{10}{2}^{t}}+15932,\\
&c_{n-1}(T(4,t))=4\times3^t-4,\\
&c_{n-2}(T(4,t))=8\times3^{2t}-24\times3^t+18 ,\\
&c_{n-3}(T(4,t))=\frac{32}{3}3^{3t}-64\times3^{2t} +\frac{392}{3}3^t-88,\\
&c_{n-4}(T(4,t))=\frac{32}{3}3^{4t}-\frac{320}{3}3^{3t}+\frac{1232}{3}3^{2t}-\frac{2132}{3}3^t+457 ,\\
&c_{n-5}(T(4,t))=\frac{128}{15}3^{5t}-128\times3^{4t}+\frac{2368}{3}3^{3t}-2480\times3^{2t}+\frac{19644}{5}3^t-2484,\\
&c_{n-6}(T(4,t))={\frac {256\,}{45}{3}^{6\,t}}-{\frac {1792\,}{15}{3}^{5\,t}}+{\frac {9664\,}{9}{3}^{4\,t}}-{\frac
{15776\,}{3}{3}^{3\,t}}+{\frac {661864\,}{45}{3}^{2\,t}}-{\frac {110756\,}{5}{3}^{t}}+13990.
\end{align*}\endgroup
\begin{table}[htp]
\centering\caption{  The Laplacian coefficients $c_{n-x}\big(T(3,t)\big)$, where $x,t\in\{2,3,4,5,6\}$. }
 \begingroup\allowdisplaybreaks\begin{tabular}{|c|c|c|c|c|c|c|}
\hline
\diagbox{x}{t} &2&3&4&5&6\\
\hline
2 &132&810& 3894& 16974&70782\\
3& 512&9520&107888&1013104&8754032 \\
4 &1146&76329&2151219&44481015&804407871 \\
5 & 1524&442926&32892762&1532049426&58577653506 \\
6 &  1196&1926456&401303300& 43109506572&3521109479132 \\
\hline
\end{tabular}\endgroup
\end{table}

 \begingroup\allowdisplaybreaks\begin{table}[h!]
\centering\caption{  The Laplacian coefficients $c_{n-x}\big(T(4,t)\big)$, where $x,t\in\{2,3,4,5,6\}$. }
\scalebox{.9}{\begin{tabular}{|c|c|c|c|c|c|c|}
\hline
\diagbox{x}{t} &2&3&4&5&6\\
\hline
2 &450&5202&50562& 466578& 4234050\\
3&3680&166736&5259296&149307536& 4098568160 \\
4 & 19549&3849829&405115261&35685894085&2971474597789 \\
5 & 71496&   68251680&24647441832&6795068311872&1721091168665352 \\
6 &  186394& 967057330&1233678403066&1073738466435154& 829575812820551386\\
\hline
\end{tabular}}
\end{table}\endgroup

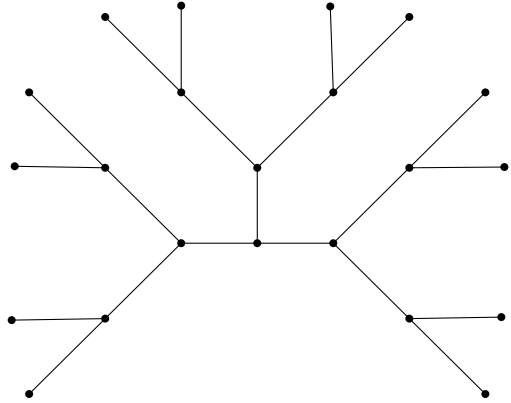
\begin{figure}[htp]
\begin{center}
\begin{tikzpicture}[line cap=round,line join=round,>=triangle 45,x=1.0cm,y=1.0cm]
\clip(-7,-2.1) rectangle (7,3.5);
\draw (0,0)-- (0,1);
\draw (0,0)-- (-1,0);
\draw (0,0)-- (1,0);
\draw (0,1)-- (-1,2);
\draw (0,1)-- (1,2);
\draw (1,0)-- (2,1);
\draw (1,0)-- (2,-1);
\draw (-1,0)-- (-2,1);
\draw (-1,0)-- (-2,-1);
\draw (-2,1)-- (-3,2);
\draw (-2,1)-- (-3.19,1.02);
\draw (-2,-1)-- (-3,-2);
\draw (-2,-1)-- (-3.23,-1.02);
\draw (-1,2)-- (-2,3);
\draw (-1,2)-- (-1,3.15);
\draw (1,2)-- (2,3);
\draw (1,2)-- (0.96,3.14);
\draw (2,1)-- (3,2);
\draw (2,1)-- (3.25,1.01);
\draw (2,-1)-- (3,-2);
\draw (2,-1)-- (3.21,-0.98);
\draw (-0.53,-1.63) node[anchor=north west] {};
\begin{scriptsize}
\fill [color=black] (0,0) circle (1.5pt);
\fill [color=black] (0,1) circle (1.5pt);
\fill [color=black] (-1,0) circle (1.5pt);
\fill [color=black] (1,0) circle (1.5pt);
\fill [color=black] (-1,2) circle (1.5pt);
\fill [color=black] (1,2) circle (1.5pt);
\fill [color=black] (2,1) circle (1.5pt);
\fill [color=black] (2,-1) circle (1.5pt);
\fill [color=black] (-2,1) circle (1.5pt);
\fill [color=black] (-2,-1) circle (1.5pt);
\fill [color=black] (-3,2) circle (1.5pt);
\fill [color=black] (-3.19,1.02) circle (1.5pt);
\fill [color=black] (-3,-2) circle (1.5pt);
\fill [color=black] (-3.23,-1.02) circle (1.5pt);
\fill [color=black] (-2,3) circle (1.5pt);
\fill [color=black] (-1,3.15) circle (1.5pt);
\fill [color=black] (2,3) circle (1.5pt);
\fill [color=black] (0.96,3.14) circle (1.5pt);
\fill [color=black] (3,2) circle (1.5pt);
\fill [color=black] (3.25,1.01) circle (1.5pt);
\fill [color=black] (3,-2) circle (1.5pt);
\fill [color=black] (3.21,-0.98) circle (1.5pt);
\end{scriptsize}
\end{tikzpicture}
\caption{ The rooted tree $T(3,3)$. \label{sh4}}
\end{center}
\end{figure}

\section{Concluding Remarks}
In this paper, an  exact formula for the coefficient $c_{n-6}$ in the Laplacian polynomial of a forest is given. As a consequence of this work and our earlier papers, the Laplacian coefficients $c_{n-k}$ of a forest $F$, $1 \leq k \leq 6$,  are computed in terms of the number of closed walks in $F$ and its line graph. We end this paper by the following conjecture:

\begin{con}
Let $G$ be a graph, $A=A(G)$ and  $A_1=A(L_1(G))$. Then
\begin{enumerate}
\item $c_{n-4}(G)$ $\leq$ $\frac{1}{4!}\Big[\prod_{i=0}^3\big(\Tr A^2-2i)-6(\Tr A^2)^2\Tr A_1^2
 +\Tr A^2\Tr(60A_1^2+8A_1^3) -\Tr(144A_1^2+48A_1^3+6A_1^4)+3(\Tr A_1^2)^2 \Big]$, the equality holds if and only if the girth of $G$ is not $3$.
 
\item $c_{n-5}(G)$ $\leq$ $\frac{1}{5!}\Big[\prod_{i=0}^4\big(\Tr A^2-2i)-10(\Tr A^2)^3\Tr A_1^2
 +(\Tr A^2)^2\Tr(180A_1^2+20A_1^3)-\Tr A^2\Tr(1040A_1^2+280A_1^3+30A_1^4)+15\Tr A^2(\Tr A_1^2)^2-\Tr A_1^2\Tr(120A_1^2+20A_1^3)+\Tr(1920A_1^2+960A_1^3+240A_1^4+24A_1^5)\Big]$, the equality holds if and only if
  the girth of $G$ is not $3$ or $5$.

\item $c_{n-6}(G)$ $\leq$ $\frac{1}{6!}\Big[\prod_{i=0}^5\big(\Tr A^2-2i)-15(\Tr A^2)^4\Tr A_1^2
 +(\Tr A^2)^3\Tr(420A_1^2+40A_1^3)$ $-$ $(\Tr A^2)^2\Tr(4260A_1^2$ $+$ $960A_1^3$ $+$ $90A_1^4)$ $+$ $45(\Tr A^2\Tr A_1^2)^2$ $-$ $810\Tr A^2(\Tr A_1^2)^2$ $+$ $\Tr A^2\Tr(18480A_1^2$ $+$ $7520A_1^3$ $+$ $1620A_1^4$ $+$ $144A_1^5)$ $-$ $15(\Tr A_1^2)^3$ $+$ $3600(\Tr A_1^2)^2$ $-$ $28800\Tr A_1^2$ $+$ $\Tr A_1^2\Tr(1200A_1^3$ $+$ $90A_1^4)$ $+$ $40(\Tr A_1^3)^2$ $-$ $120\Tr A^2\Tr A_1^2\Tr A_1^3$ $-$ $\Tr(19200A_1^3$ $-$ $7200A_1^4$ $-$ $1440A_1^5$ $-$ $120A_1^6) \Big]$, the equality holds if and only if
  the girth of $G$ is not $3$ or $5$.
\end{enumerate}
\end{con}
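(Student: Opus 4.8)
The plan is to recognize the bracketed right-hand side of each inequality as the $k$-th coefficient of the \emph{signless} Laplacian $Q(G)=D(G)+A(G)$, and then to compare that coefficient with $c_{n-k}(G)$ by means of two subgraph expansions. Let $B$ be the unsigned vertex--edge incidence matrix of $G$, so that $A(S(G))=\bigl(\begin{smallmatrix}0&B\\ B^{T}&0\end{smallmatrix}\bigr)$ and $\det(xI-A(S(G)))=x^{m-n}\det(x^{2}I-BB^{T})$. Since $BB^{T}=Q(G)$ and $B^{T}B=A_{1}+2I_{m}$, the nonzero eigenvalues of $A(S(G))$ are $\pm\sqrt{\mu_{i}}$, where $\mu_{1},\dots,\mu_{n}$ are the eigenvalues of $Q(G)$, and $\Tr Q^{j}=\Tr(A_{1}+2I_{m})^{j}$ for all $j\ge1$. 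The coefficient $q_{n-k}(G)$ equals the elementary symmetric function $e_{k}(\mu_{1},\dots,\mu_{n})$, which by Newton's identities is a fixed polynomial in $\Tr Q,\dots,\Tr Q^{k}$, hence---using $\Tr A_{1}=0$ and $2m=\Tr A^{2}$---a fixed polynomial in $\Tr A^{2},\Tr A_{1}^{2},\dots,\Tr A_{1}^{k}$. First I would verify that this polynomial is exactly the bracketed expression; this is a direct identification once both are written in these traces through the relation above, and it is consistent with the paper's main theorem because $Q$ and $L$ are cospectral on forests.

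With the right-hand side identified as $q_{n-k}(G)$, each inequality reads $c_{n-k}(G)\le q_{n-k}(G)$, and I would establish it through two combinatorial formulas. The Kelmans--Chelnokov formula gives $c_{n-k}(G)=\sum_{F}\prod_{i}|V(T_{i})|$, the sum over all spanning forests $F$ of $G$ with exactly $k$ edges, $T_{i}$ its tree components. The signless-Laplacian analogue (the TU-subgraph theorem of Cvetkovi\'c, Rowlinson and Simi\'c) gives $q_{n-k}(G)=\sum_{H}4^{\,c(H)}\prod_{T_{i}\text{ tree}}|V(T_{i})|$, where $H$ ranges over the $k$-edge subgraphs each of whose components is a tree or an odd-unicyclic graph, and $c(H)$ is the number of odd-unicyclic components. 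Every spanning forest is such an $H$ with $c(H)=0$, contributing identically to both sums, so subtracting yields $q_{n-k}(G)-c_{n-k}(G)=\sum_{c(H)\ge1}4^{\,c(H)}\prod_{T_{i}\text{ tree}}|V(T_{i})|\ge0$. This proves all three inequalities simultaneously.

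For the equality assertions I would analyse when this nonnegative remainder vanishes: precisely when $G$ has no $k$-edge subgraph carrying an odd-unicyclic component, that is, when $G$ contains no odd cycle of length at most $k$ as a subgraph. For $k=4$ the only odd cycle of length $\le4$ is a triangle, so the remainder vanishes iff $G$ is triangle-free, i.e.\ iff the girth of $G$ is not $3$; this proves part (1) exactly as stated. For $k=5$ and $k=6$ the relevant obstructions are triangles and $5$-cycles, so the criterion produced by the argument is that $G$ contain neither a $C_{3}$ nor a $C_{5}$ subgraph.

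The main obstacle is precisely the equality criterion in parts (2) and (3). The argument gives equality iff $G$ has no $C_{3}$ and no $C_{5}$ \emph{subgraph}, which is strictly stronger than the stated hypothesis on the girth: a graph of girth $4$ can still contain a (non-shortest) $5$-cycle, and for such a graph the remainder is strictly positive. The generalized theta graph on two degree-three vertices joined by internally disjoint paths of lengths $2,2,3$ is a six-vertex instance of girth $4$ containing a $C_{5}$, and one computes $c_{n-5}=6\tau=96$ against a right-hand side of $104$, so equality fails although the girth is neither $3$ nor $5$. Thus the inequality is the robust part and follows from the nonnegativity above for every graph; the genuine work is to correct the equality hypothesis for $k=5,6$ to the absence of short odd cycles and to confirm that the remainder vanishes under that corrected assumption.
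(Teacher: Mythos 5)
The first thing to say is that the paper contains no proof of this statement: it is posed in the Concluding Remarks as an open conjecture, so your proposal cannot be compared against anything in the paper and must be judged on its own. Judged that way, it is essentially sound, and it does more than the conjecture asks: it proves the three inequalities and \emph{refutes} the stated equality criteria in parts (2) and (3). Your identification of each bracketed right-hand side with the corresponding coefficient $q_{n-k}(G)$ of the signless Laplacian $Q=D+A$ is correct: since $BB^{T}=Q$ and $B^{T}B=2I_{m}+A_{1}$, every $\Tr Q^{j}$ is a universal polynomial in $\Tr A^{2}$ and $\Tr A_{1}^{2},\dots,\Tr A_{1}^{j}$, so by Newton's identities $q_{n-k}$ is such a universal polynomial as well; it agrees with the paper's bracketed expression on all forests (where $L$ and $Q$ are cospectral), and the forest trace-tuples are Zariski dense (already disjoint unions of stars realize a dense set), so the two polynomials coincide identically. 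One should state this density step explicitly (or verify the identity by direct expansion), since agreement of two polynomials on a special class of graphs does not by itself give a polynomial identity; but this is a checkable computation, not a gap. Given that identification, the Kelmans forest expansion of $c_{n-k}$ versus the Cvetkovi\'c--Rowlinson--Simi\'c TU-subgraph expansion of $q_{n-k}$ does exactly what you say: the difference is a sum of nonnegative weights $4^{c(H)}\prod|V(T_{i})|$ over $k$-edge TU-subgraphs with at least one odd-unicyclic component, which proves all three inequalities.

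Your analysis of equality is also correct, and it shows the conjecture is false as stated in parts (2) and (3): the remainder vanishes iff $G$ has no $k$-edge TU-subgraph with an odd-unicyclic component, which (apart from degenerate cases) means no odd cycle of length at most $k$ \emph{as a subgraph}, not a condition on the girth alone. Your counterexample checks out: for the graph obtained by joining two vertices by internally disjoint paths of lengths $2,2,3$ one has $n=6$, $m=7$, girth $4$, two $5$-cycles, $\tau=2\cdot2+2\cdot3+2\cdot3=16$, hence $c_{n-5}=c_{1}=n\tau=96$, while the right-hand side equals $q_{1}=96+2\cdot4=104$ (the two $C_{5}$'s each contribute weight $4$); so equality fails although the girth is neither $3$ nor $5$, and the same graph kills part (3) as well. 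Two minor points you should add for completeness: (i) even part (1)'s ``only if'' direction has trivial exceptions (e.g.\ $K_{3}$ plus isolated vertices, where no $4$-edge subgraph exists and equality holds vacuously despite girth $3$), so the corrected criterion should be phrased in terms of existence of the relevant TU-subgraphs; (ii) for the ``if'' direction of the corrected criterion one needs the easy observation that an odd cycle of length $\ell\le k$ in a graph with at least $k$ edges extends to a $k$-edge TU-subgraph. With those adjustments, what you have is a theorem that settles the inequality part of the conjecture and replaces its equality clause by the correct one.
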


\vskip 3mm

\noindent\textbf{Acknowledgments:} The authors supported by the University of Kashan under grant no. 985968/1.

\bigskip

\vskip 0.4 true cm


\end{document}